\newcommand\ada[1]{{\color{black} #1}}
\newcommand\gr[1]{{\color{black} #1}}
\newtheorem{theorem}{Theorem}[section]
\newtheorem{lemma}[theorem]{Lemma}
\newtheorem{proposition}[theorem]{Proposition}
\newtheorem{assumption}[theorem]{Assumption}
\theoremstyle{definition}
\newtheorem{definition}[theorem]{Definition}
\theoremstyle{claim}
\newtheorem{claim}{Claim}
\newtheorem{pf}{Proof of Claim}
\theoremstyle{remark}
\newtheorem{remark}[theorem]{Remark}
\newtheorem{example}[theorem]{Example}
\newcommand{\Tr}{\mathop{\mathrm{Tr}}}
\newcommand{\Ll}{\mathbb{L}^2}
\newcommand{\z}{\hat{z}}
\newcommand{\tz}{\tilde{z}}
\newcommand{\n}{\tilde{N}(dt,dz)}
\newcommand{\N}{\tilde{N}(ds,dz)}
\newcommand{\ns}{\tilde{N}(ds,dz)}
\newcommand{\h}{\mathbb{H}_0^1}
\newcommand{\Hh}{H_0^1}
\newcommand{\la}{\lambda(dz)}
\newcommand{\ou}{\overline{u}}
\newcommand{\on}{\overline{N}}
\newcommand{\ow}{\overline{W}}
\newcommand{\E}{\mathbb{E}}
\newcommand{\oE}{\overline{\mathbb{E}}}
\newcommand{\tE}{\tilde{\mathbb{E}}}
\newcommand{\oz}{\overline{z}}
\def\L{t\wedge\tau_N}
\renewcommand{\d}{\/ d \/}
\def\s{\sigma}
\def\mJ{\mathcal{J}}
\numberwithin{equation}{section}
\begin{document}

\title[Stochastic Control of Tide Equations]{Stochastic Control of Tidal Dynamics Equation with L\'evy Noise}

\author{Pooja Agarwal}

\address{Division of Applied Mathematics\\
Brown University\\
Providence, Rhode Island 02912, USA}
\email{Pooja\_Agarwal@Brown.edu}

\author{Utpal Manna}
\address{Indian Institute of Science Education and Research Thiruvananthapuram\\
Thiruvananthapuram 695016\\
Kerala, India}
\email{manna.utpal@iisertvm.ac.in}

\author{Debopriya Mukherjee}
\address{Indian Institute of Science Education and Research Thiruvananthapuram\\
Thiruvananthapuram 695016\\
Kerala, India}
\email{debopriya13@iisertvm.ac.in}

\subjclass[2010]{35Q35, 60H15, 76D03, 76D55} 

\keywords{Stochastic control, Initial value control, Tide equation, Minty-Browder theory, Martingale solution}

\begin{abstract}
In this work we first present the existence, uniqueness and regularity of the strong solution of the tidal dynamics model perturbed by L\'evy noise. Monotonicity arguments have been exploited in the proofs. We then formulate  a martingale problem of Stroock and Varadhan associated to an initial value control problem and establish existence of optimal controls. 
\end{abstract}

\maketitle
\tableofcontents

\section{Introduction}
Ocean-tide information has considerably many applications. The data obtained is used to solve vital problems in oceanography and geophysics, and to study earth tides, elastic properties of the Earth's crust and tidal gravity variations. It is also used in space studies to calculate the trajectories of man-made satellites of the Earth and to interpret the results of satellite measurements. The interaction of tides with deep sea ridges and chains of seamounts give rise to deep eddies which transport nutrients from the deep to the surface. The alternate flooding and exposure of the shoreline due to tides is an important factor in the determination of the ecology of the region.

One of the first mathematical explanation for tides was given by Newton by describing tide generating forces. The first dynamic theory of tides was proposed by Laplace. Here we consider the tidal dynamics model proposed by Marchuk and Kagan \cite{marchuk}. The existence and uniqueness of weak solutions of the deterministic tide equation and that of strong solutions of the stochastic tide equation with additive trace class Gaussian noise have been proved in Manna, Menaldi and Sritharan\cite{manna2008stochastic}. In this work, we consider the stochastic tide equation with L\'evy noise and prove the existence and uniqueness and regularity of solution in bounded domains. Control of fluid flow has numerous applications in control of pollutant transport, oil recovery/transport problems, weather predictions, control of underwater vehicles etc. Unification of many control problems in the engineering sciences have been done by studying the optimal control problem of Navier-Stokes equations (see \cite{sritharan2000deterministic},  \cite{sritharan}). Here we consider the initial data optimal control of the  stochastic tidal dynamics model. We consider the Stroock-Varadhan martingale formulation \cite{var} of the stochastic model to prove the existence of optimal initial value control.  

Organization of the paper is as follows.  A brief description of the model has been given in Section \ref{model}. Section \ref{setting} describes the functional setting of the problem and states the \ada{monotonicity property of the linear and non-linear operators}. In Section \ref{estimate} we consider the a-priori estimates and prove the existence, uniqueness and regularity of strong solution. In Section \ref{stochastic} we consider the stochastic optimal control problem with initial value control.
 
Let $\mathcal{O}$ be a bounded domain in $\mathbb{R}^2$ with smooth boundary. Let $(\Omega,\mathcal{F},\mathcal{F}_t,P)$ be a given filtered probability space. In the framework of Gelfand triple $\h(\mathcal{O})\subset\Ll (\mathcal{O})\subset\mathbb{H}^{-1}(\mathcal{O})$, we consider the following tidal dynamics model with L\'evy noise
\begin{align}
&du(t) + [Au(t)+B(u(t))+g\nabla\z(t)]dt
=f(t)dt+\s(t,u(t))dW(t)\notag\\& \quad \quad+\int_Z H(u(t-),z) \n, \quad\text{in }\quad [0,T]\times\mathcal{O}\times \Omega, \yesnumber\label{me1}\\
&d\z(t)+Div(hu(t)) dt=0\quad\quad\text{in } \quad[0,T]\times\mathcal{O}\times \Omega,\yesnumber\label{me2}\\
&u(0,x,\omega)=u_0(x,\omega),\qquad \z(0,x,\omega)=\z_0(x, \omega), \ (x,\omega)\in\mathcal{O}\times\Omega.\yesnumber\label{me3}
\end{align}
The operators $A$ and $B$ are defined in Sections \ref{model} and \ref{setting}. $(W(t))_{t\geq 0}$ is an $\Ll$-valued Wiener process with trace class covariance. $\n=N(dt,dz)-\la dt$ is a compensated Poisson random measure, where $N(dt,dz)$ denotes the Poisson counting measure associated to the point process $p(t)$ on $Z\in\mathscr{B}(\Ll\backslash\{0\})$, where the solutions of the above system have its paths, and $\la$is a $\sigma$-finite measure on $(Z,\mathscr{B}(Z))$.
 
The following theorem states the main result of Section \ref{estimate}. The functional spaces appearing in the statement of the theorem have been defined in Section \ref{setting}.
\begin{theorem}
\label{thmint1}
Let us consider the above stochastic tide model with $f,u_0$ and $\z_0$ such that
\begin{equation*}
f\in L^2(\Omega; L^2(0,T;\Ll(\mathcal{O}))),\quad u_0\in L^2(\Omega; \Ll(\mathcal{O})),\quad \z_0\in L^2(\Omega; L^2(\mathcal{O})).
\end{equation*}
Let the noise coefficients $\s$ and $H$ satisfy Assumption \ref{Hyp}. 
Then there exist path-wise unique adapted processes $u(t,x,\omega)$ and $\z(t,x,\omega)$  with the regularity 
\begin{equation}
\left\{\begin{array}{ll}
&u\in L^2(\Omega; L^{\infty}(0,T;\mathbb{L}^2(\mathcal{O}))\cap L^2(0,T;\h(\mathcal{O})))\\
&\z\in L^2(\Omega ;C([0,T];L^2(\mathcal{O})))
\end{array}\right. 
\end{equation}
satisfying the the stochastic tide model  \eqref{me1}-\eqref{me3} in the weak sense.
\end{theorem}
In Section \ref{stochastic}, we consider the following stochastic optimal control problem with initial value control, 
\begin{align}
&du(t) + [Au(t)+B(u(t))+g\nabla\z(t)]dt
=f(t)dt+\s(t,u(t))dW(t)\nonumber \\& \quad\quad\quad+\int_Z H(u(t-),z) \n
\qquad \text{in }[0,T]\times\mathcal{O}\times \Omega,\label{scint1}\\
&d\z(t)+Div(hu(t)) dt=0\quad\quad\text{in }[0,T]\times\mathcal{O}\times \Omega,\label{scint2}\\
&u(0)=u_0+U,\qquad \z(0)=\z_0,\label{scint3}
\end{align}
where $u_0\in L^2(\Omega; \Ll(\mathcal{O}))$, $\z_0\in L^2(\Omega; L^2(\mathcal{O}))$ and $U\in L^2(\Omega; \Ll(\mathcal{O}))$. The regularities on the initial values and the  assumptions on $\s$ and $H$ are the same as considered in Theorem \ref{thmint1}. The cost functional is given by
\begin{equation}
\mathcal{J}(u, \z, U)=\E\left[\int_0^T \int_\mathcal{O} L(t,u, \z, U) dx\,dt\right],
\end{equation}
where the function $L$ is defined in Section \ref{stochastic}.

The main result of Section \ref{stochastic} is the following:
\begin{theorem} \label{main2.1}
Suppose there exists $u_0\in L^2(\Omega; \Ll(\mathcal{O}))$ and $\z_0\in L^2(\Omega; L^2(\mathcal{O}))$ and $\pi \in \bar{\mathcal{U}}^{w}_{ad}(u_0,\z_0,T)$ such that $J(\pi)<+\infty,$ where $J$ is defined by 
\eqref{form.barJ}. Then the optimal control problem admits a weak optimal control with time horizon $[0,T]$ where $\bar{\mathcal{U}}^{w}_{ad}(u_0,\z_0,T)$ denotes the set of all weak admissible controls (with time horizon $[0,T]$). 
\end{theorem}

\section{Tidal Dynamics: The Deterministic Model}\label{model}
Under the assumptions that: (1) Earth is perfectly solid, (2) ocean tides do not change Earth's gravitational field, and (3) no energy exchange takes place between the mid-ocean and shelf zone, Marchuk and Kagan \cite{marchuk} obtained the following mathematical model
\begin{equation}
\partial_t w + A_1 w -\kappa_h\triangle w +\dfrac{r}{h}|w|w+g\nabla\xi =f,
\end{equation} 
\begin{equation}
\partial_t \xi + Div(hw)=0,
\end{equation}
in $[0,T]\times\mathcal{O}$, where $\mathcal{O}$ is a bounded 2-D domain (horizontal ocean basin) with coordinates $x=(x_1,x_2)$ and $t$ represents the time. Here $\partial_t$ denotes the time derivative, $\triangle,\nabla$ and $Div$ are the Laplacian, gradient and the divergence operators respectively.

The unknown variables $(w,\xi)$ represent the total transport 2-D vector (i.e., the vertical integral of the velocity from the ocean surface to the ocean floor) and the displacement of the free surface with respect to the ocean floor. The coefficients $A_1=[a_{ij}]$ is a 2-dimensional antisymmetric
square matrix with constant coefficients $a_{11}=a_{22}=0$ and
$-a_{12}=a_{21}=2\omega_z$, the Coriolis parameter (i.e.,
$\omega_z=\omega\cos(\varphi),$ $\omega$ is the angular velocity of
the Earth rotation and $\varphi$ the latitude), $\kappa_h>0$ the
constant horizontal macro turbulent viscosity coefficient, $r>0$ the
constant bottom friction coefficient equal to a numerical constant,
$g$ the Earth gravitational constant, $h=h(x)$ is the (vertical)
depth at $x$ in the region $\mathcal{O}$ and $f=\gamma_L
g\nabla \xi^+$ is the known tide-generating force with $\gamma_L$
the Love factor.

Following Manna et al. \cite{manna2008stochastic}, and Marchuk et al. \cite{marchuk}, we denote by $A$ the following matrix operator
\begin{equation}
\label{eq3}
A:=\left(\begin{array}{cc}
-\alpha\triangle & -\beta\\
\beta &  -\alpha\triangle
\end{array}\right),
\end{equation}
and the nonlinear vector operator
\begin{equation}
v\mapsto\gamma |v|v:=\left(\begin{array}{c}
\gamma(x)v_1\sqrt{v_1^2+v_2^2}\\
\gamma(x)v_2\sqrt{v_1^2+v_2^2}
\end{array}\right),
\end{equation}
where $\alpha:=\kappa_h$ and $\beta:=2\omega\cos (\varphi)$ are positive constants, $\gamma(x):=r/h(x)$ is a strictly positive smooth function. In this model we assume the depth $h(x)$ to be a continuously differentiable function of $x$, nowhere becoming zero, so that
\begin{equation}\label{bddh}
\min\limits_{x\in\mathcal{O}} h(x)=\epsilon >0,\qquad\max\limits_{x\in\mathcal{O}}h(x)=\mu,\quad\max\limits_{x\in\mathcal{O}}|\nabla h(x)|\leq M ,
\end{equation}
where M is some positive constant which equals zero at a constant ocean depth.

To reduce to homogeneous Dirichlet boundary conditions consider the natural change of unknown functions
\begin{equation}
u(t,x):=w(t,x)-w^0(t,x),
\end{equation}
and
\begin{equation}
\z(t,x):=\xi(t,x)+\int_0^t Div(hw^0(s,x))ds,
\end{equation}
which are referred to the tidal flow and the elevation. The full flow $w^0$ which is given a-priori on the boundary $\partial\mathcal{O}$, has been extended to the whole domain $[0,T]\times\mathcal{O}$ as a smooth function and still denoted by $w^0$.\\\\
Then the tidal dynamic equation can be written as 
\begin{equation}
\left\{\begin{array}{l}
\partial_t u+Au+\gamma |u+w^0|(u+w^0)+g\nabla\z=f^\prime\quad\text{in}\quad[0,T]\times\mathcal{O},\\
\partial_t \z + Div(hu)=0\quad\text{in}\quad[0,T]\times\mathcal{O},\\
u=0\quad\text{on}\quad[0,T]\times\partial\mathcal{O},\\
u=u_0,\;\;\z=\z_0\quad\text{in}\quad\{0\}\times\mathcal{O},
\end{array}\right.
\end{equation}
where
\begin{IEEEeqnarray}{lCl}
f^\prime =f-\dfrac{\partial w^0}{\partial t}+g\nabla\int_0^t Div(hw^0)dt-Aw^0,\\
u_0(x)=w_0(x)-w^0(x,0),\\
\z_0(x)=\xi_0(x).
\end{IEEEeqnarray}


\section{Functional Setting}\label{setting}
We use the (vector-valued) Sobolev spaces $\h(\mathcal{O}):=H^1_0(\mathcal{O},\mathbb{R}^2)$ and $\Ll(\mathcal{O}):=L^2(\mathcal{O},\mathbb{R}^2)$, with:\\
the norm on $\h(\mathcal{O})$ as
\begin{equation}
\|v\|_{\h}:=\left(\int_{\mathcal{O}}|\nabla v|^2 dx\right)
^{1/2},
\end{equation}
and the norm on $\Ll(\mathcal{O})$ as
\begin{equation}
\|v\|_{\Ll}:=\left(\int_{\mathcal{O}}|v|^2 dx\right)^{1/2}.
\end{equation}
Using the Gelfand triple $\h(\mathcal{O})\subset\Ll (\mathcal{O})\subset\mathbb{H}^{-1}(\mathcal{O})$, we may consider $\triangle$ or $\nabla$ as a linear map from $\h(\mathcal{O})$ or $\Ll(\mathcal{O})$ into the dual of $\h(\mathcal{O})$ respectively. The inner product in $\Ll(\mathcal{O})$ is denoted by $(\cdot,\cdot)_{\Ll}$ and is defined by
\begin{equation}
(u,v)_{\Ll}=\int_{\mathcal{O}}u(x)\cdot v(x)dx,
\end{equation}
for any $u$ and $v$ in $\Ll(\mathcal{O})$. Likewise, inner product in $L^2(\mathcal{O})$ is denoted by $(\cdot,\cdot)_{L^2}$. The induced duality between the spaces $\h(\mathcal{O})$ and $\mathbb{H}^{-1}(\mathcal{O})$ is denoted by $\langle\cdot,\cdot\rangle$.

\begin{lemma}
	\label{lemw}For any real-valued smooth function $\varphi$ and $\psi$ with compact support in $\mathbb{R}^2$, the following hold:
	\begin{align}
		&\|\varphi\psi\|_{L^2}^2\leq 4\|\varphi\partial_1\varphi\|_{L^2}\|\psi\partial_2\psi\|_{L^2},\\
		\label{eq13}
		&\|\varphi\|_{L^4}^4\leq 2\|\varphi\|_{L^2}^2\|\nabla\varphi\|_{L^2}^2.
	\end{align}
\end{lemma}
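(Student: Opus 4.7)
My plan is to prove both estimates by the classical two-dimensional Ladyzhenskaya-type argument: a fundamental theorem of calculus applied in each coordinate direction, followed by Fubini and Cauchy--Schwarz. By a standard density argument it suffices to work with $\varphi,\psi\in C_c^\infty(\mathbb{R}^2)$, which I assume from now on.

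The key ingredient for the first inequality is the identity $\varphi^2(x_1,x_2)=2\int_{-\infty}^{x_1}\varphi(s,x_2)\,\partial_1\varphi(s,x_2)\,ds$, from which one reads off the pointwise bound
\[
\varphi^2(x_1,x_2)\;\le\;2\int_{\mathbb{R}}|\varphi\,\partial_1\varphi|(s,x_2)\,ds
\]
uniformly in $x_1$, together with the analogous bound for $\psi^2$ that is uniform in $x_2$, obtained by integrating in the second variable. Multiplying these two pointwise bounds produces an upper bound for $\varphi^2\psi^2$ in which the dependence on $x_1$ and the dependence on $x_2$ completely separate. Integrating over $\mathbb{R}^2$ and invoking Fubini factorizes the resulting double integral into a product of two single integrals, and one Cauchy--Schwarz step on each factor (once on the slice variable, once on the remaining variable) upgrades the $L^1$-type quantities left by Fubini into the norms appearing on the right-hand side, with the advertised constant $4$.

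For the second inequality I would specialize the first to $\varphi=\psi=u$, yielding $\|u\|_{L^4}^4\le 4\,\|u\,\partial_1 u\|\,\|u\,\partial_2 u\|$ in the appropriate norms, factor out $\|u\|_{L^2}^2$, and close the estimate using the arithmetic--geometric mean inequality $ab\le\tfrac12(a^2+b^2)$ with $a=\|\partial_1 u\|_{L^2}$ and $b=\|\partial_2 u\|_{L^2}$, so that $\|\partial_1 u\|_{L^2}\|\partial_2 u\|_{L^2}\le\tfrac12\|\nabla u\|_{L^2}^2$ and the stated constant $2$ emerges.

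The subtle point in the scheme is the bookkeeping in the final Cauchy--Schwarz step: the double integral naturally produced by Fubini is an $L^1$-type object, so the Cauchy--Schwarz inequalities must be inserted at exactly the right places for the right-hand side to acquire the product structure stated in \eqref{eq13} and for the constants to come out precisely as $4$ (and subsequently $2$ via AM--GM). Everything else is routine once the FTC bound and the separation-of-variables observation are in place.
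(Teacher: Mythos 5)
The paper offers no proof of this lemma at all (it simply cites Ladyzhenskaya), and your overall scheme --- fundamental theorem of calculus in each coordinate direction, Fubini, Cauchy--Schwarz, AM--GM --- is exactly the classical argument from that reference. The problem is the final bookkeeping you describe for the first inequality, which is precisely the step that cannot be made to work. After multiplying the two pointwise bounds and applying Fubini you obtain $\|\varphi\psi\|_{L^2}^2\le 4\,\|\varphi\,\partial_1\varphi\|_{L^1(\mathbb{R}^2)}\,\|\psi\,\partial_2\psi\|_{L^1(\mathbb{R}^2)}$, and the Cauchy--Schwarz steps you invoke turn each $L^1$ factor into $\|\varphi\|_{L^2}\|\partial_1\varphi\|_{L^2}$ (respectively $\|\psi\|_{L^2}\|\partial_2\psi\|_{L^2}$), not into $\|\varphi\,\partial_1\varphi\|_{L^2}$ as the displayed statement would require; there is no bound of $\|f\|_{L^1(\mathbb{R}^2)}$ by $\|f\|_{L^2(\mathbb{R}^2)}$ with a universal constant. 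Indeed the first display with $L^2$ norms on the right is false as printed: taking $\psi=\varphi$ and dilating, $\varphi_\lambda(x)=\varphi(x/\lambda)$, the left-hand side scales like $\lambda^2$ while $\|\varphi_\lambda\,\partial_1\varphi_\lambda\|_{L^2}$ is independent of $\lambda$. So the first inequality must be read with $L^1$ norms on the right (this is Ladyzhenskaya's Lemma 1), and your argument proves exactly that version --- with no Cauchy--Schwarz needed at that stage --- but it does not, and cannot, deliver the $L^2$ form you set out to prove with constant $4$.

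Once this is corrected, your route to \eqref{eq13} is sound: with $\varphi=\psi=u$ the $L^1$ version gives $\|u\|_{L^4}^4\le 4\,\|u\,\partial_1 u\|_{L^1}\,\|u\,\partial_2 u\|_{L^1}$, one Cauchy--Schwarz on each factor gives $4\,\|u\|_{L^2}^2\,\|\partial_1 u\|_{L^2}\,\|\partial_2 u\|_{L^2}$, and AM--GM produces the constant $2$. Be aware that the step you call factoring out $\|u\|_{L^2}^2$ is legitimate only when applied to the $L^1$ factors via Cauchy--Schwarz; it is not valid for $\|u\,\partial_i u\|_{L^2}$, so the hedge ``in the appropriate norms'' hides the same issue as above. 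Since only \eqref{eq13} is invoked later in the paper (in the $\mathbb{H}_0^1$-regularity estimate), the corrected reading leaves all subsequent applications intact.
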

\noindent For proof see Ladyzhenskaya \cite{lady}.\\\\
Notice that by means of the Gelfand triple we may consider $A$, given by \eqref{eq3}, as a mapping of $\h(\mathcal{O})$ into its dual $\mathbb{H}^{-1}(\mathcal{O})$.

Define the non-symmetric bilinear form 
\begin{equation}
a(u,v):=\alpha[(\partial_1 u_1,\partial_1 v_1)_{L^2}+(\partial_2 u_2,\partial_2 v_2)_{L^2}]+\beta[(u_1,v_2)_{L^2}-(u_2,v_1)_{L^2}],
\end{equation} 
on $\h$. Thus if $u$ has a smooth second derivative then
\begin{equation*}
a(u,v)=(Au,v)_{\Ll},
\end{equation*}  
for every $v$ in $\h(\mathcal{O})$. Moreover, the bilinear form $a(\cdot,\cdot)$ is continuous and coercive in $\h(\mathcal{O})$,i.e.,
\begin{align}\label{prop.A}
	&|a(u,v)|\leq C_1\|u\|_{\mathbb{H}_0^1}\|v\|_{\mathbb{H}_0^1}\qquad\forall u,v\in\h(\mathcal{O}),\\
	&(Au,u)_{\Ll}=a(u,u)=\alpha\|u\|_{\mathbb{H}_0^1}^2,
\end{align}
for some positive constant $C_1=\alpha + \beta$.

Let us denote the nonlinear operator $B(\cdot)$ by
\begin{equation}
\label{eq4}
v\mapsto B(v):=\gamma |v+w^0|[v+w^0].
\end{equation}
Then we have the following lemma:
\begin{lemma}
	\label{mon}
	Let $u$ and $v$ be in $\mathbb{L}^4(\mathcal{O},\mathbb{R}^2)$. Then the following estimate holds:
	\begin{equation}
	\langle B(u)-B(v),u-v\rangle\geq 0.
	\end{equation}
\end{lemma}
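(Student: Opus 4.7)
The plan is to reduce the claim to a pointwise monotonicity inequality in $\mathbb{R}^2$, then integrate against the positive weight $\gamma$.

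First I would rewrite the pairing using the shifted variables $\overline{u} := u + w^0$ and $\overline{v} := v + w^0$, noting that $u - v = \overline{u} - \overline{v}$. Since $u,v \in \mathbb{L}^4(\mathcal{O},\mathbb{R}^2)$ and $w^0$ is smooth on $\overline{\mathcal{O}}$, we have $\overline{u},\overline{v} \in \mathbb{L}^4$, and hence $|\overline{u}|\overline{u}, |\overline{v}|\overline{v} \in \mathbb{L}^2$. Because $\gamma$ is a bounded strictly positive function, $B(u)$ and $B(v)$ lie in $\mathbb{L}^2(\mathcal{O})$ and the duality pairing $\langle B(u)-B(v), u-v\rangle$ coincides with the $\mathbb{L}^2$ inner product. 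Therefore
\begin{equation*}
\langle B(u)-B(v), u-v\rangle = \int_{\mathcal{O}} \gamma(x)\bigl(|\overline{u}|\overline{u} - |\overline{v}|\overline{v}\bigr)\cdot(\overline{u}-\overline{v})\,dx.
\end{equation*}

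Next I would establish the pointwise bound: for any $a,b\in\mathbb{R}^2$,
\begin{equation*}
(|a|a - |b|b)\cdot(a-b) \geq 0.
\end{equation*}
Expanding the dot product yields $|a|^3 + |b|^3 - (|a|+|b|)(a\cdot b)$. Applying the Cauchy--Schwarz inequality $a\cdot b \leq |a||b|$ gives the lower bound $|a|^3 + |b|^3 - (|a|+|b|)|a||b|$, which factors as $(|a|-|b|)^2(|a|+|b|) \geq 0$. This is the key algebraic identity and should be the cleanest way to arrive at the inequality; it is essentially the two-dimensional analogue of the standard monotonicity of $t\mapsto t|t|$ on $\mathbb{R}$.

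Finally, applying this pointwise inequality with $a = \overline{u}(x)$, $b = \overline{v}(x)$ and multiplying by $\gamma(x) > 0$ gives a nonnegative integrand almost everywhere, so the integral is nonnegative, completing the proof. I do not anticipate a genuine obstacle here; the only care needed is to justify that the pairing reduces to an absolutely convergent $\mathbb{L}^2$ integral, which is immediate from the $\mathbb{L}^4$ hypothesis on $u,v$ and the boundedness of $w^0$ and $\gamma$.
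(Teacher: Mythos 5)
Your argument is correct, and it is essentially the argument behind the result: the paper itself gives no proof but defers to Lemma 3.3 of the cited work of Manna, Menaldi and Sritharan, where the estimate is likewise reduced, after the shift $\overline{u}=u+w^0$, $\overline{v}=v+w^0$, to the pointwise monotonicity of $a\mapsto |a|a$ on $\mathbb{R}^2$ and then integrated against the nonnegative weight $\gamma$. Your Cauchy--Schwarz factorization $|a|^3+|b|^3-(|a|+|b|)(a\cdot b)\geq(|a|-|b|)^2(|a|+|b|)\geq 0$ is a clean way to get the pointwise inequality (the reference in fact obtains the slightly stronger bound of the form $\tfrac{1}{2}(|a|+|b|)\,|a-b|^2$, which is not needed here), and your justification that the pairing is an absolutely convergent $\mathbb{L}^2$ integral under the $\mathbb{L}^4$ hypothesis is exactly the right bookkeeping.
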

\noindent For proof see Lemma 3.3 in Manna, Menaldi and Sritharan \cite{manna2008stochastic}.

The nonlinear operator $B(\cdot)$ is a continuous operator from $\mathbb{L}^4(\mathcal{O})$ to $\Ll(\mathcal{O})$, where
\begin{IEEEeqnarray}{lCr}
	\label{e2}
	\|B(v)\|_{\Ll}\leq C_2\|v+w^0\|_{\mathbb{L}^4}^2,\\
	\label{e1}
	\|B(u)-B(v)\|_{\Ll}\leq C_2[\|u+w^0\|_{\mathbb{L}^4}+\|v+w^0\|_{\mathbb{L}^4}]\|u-v\|_{\mathbb{L}^4},
\end{IEEEeqnarray}
where the constant $C_2$  is the sup-norm of the function $\gamma$.

\subsection{Preliminaries on Stochastic Processes}
In this Subsection we provide definitions and some properties of Hilbert space valued Wiener processes, L\'evy processes and Skorokhod spaces, most of which have been borrowed from the books by Da Prato and Zabczyk \cite{da}, Applebaum \cite{apple} and M\'{e}tivier \cite{metivier}.

Let $U$ and $V$ be two separable Hilbert spaces and let
$\mathcal{L}(U,V)$ denote the space of all bounded linear operators
from $U$ to $V$. Let $\{e_j\}_{j=1}^{\infty}$ be an orthonormal
basis in $U$. Then we say that a positive operator $Q\in\mathcal{L}(U,U)$
\ada{(i.e., $(Qx,x)_U
\geq 0$ for all non-zero $x\in U$)} is \textit{trace
	class} if $\mathlarger{\sum_{j=1}^{\infty}}(Qe_j,e_j)_{U}<\infty$.
Let $Q$ be a symmetric (i.e., $Q^*=Q$), positive, trace class
operator on $U$. Then there exist a sequence of eigenvalues
$\lambda_j$ with the corresponding sequence of eigenvectors
$\{e_j\}$ such that $Qe_j=\lambda_je_j$ for all $j=1,2,\cdots$ and
$$\mathrm{Tr}(Q)=\sum_{j=1}^{\infty}(Qe_j,e_j)_U=\sum_{j=1}^{\infty}(\lambda_je_j,e_j)_U
=\sum_{j=1}^{\infty}\lambda_j\|e_j\|_U^2=\sum_{j=1}^{\infty}\lambda_j<\infty.$$
\begin{definition}
	Let $U$ be a Hilbert space. A stochastic process $\{W(t)\}_{0\leq
		t\leq T}$ is said to be a $U$-valued $\mathcal{F}_t$-adapted Wiener
	process with covariance operator $Q$ if
	\begin{enumerate}
		\item [(i)] For each non-zero $h\in U$, $|Q^{\frac{1}{2}}h|^{-1} (W(t), h)_U$ is a standard one-dimensional Wiener process,
		\item [(ii)] For any $h\in U, (W(t), h)_U$ is a martingale adapted to $\mathcal{F}_t$.
	\end{enumerate}
\end{definition}
If $W$ is a $U$-valued Wiener process with covariance operator $Q$
with $\Tr Q < \infty$, then $W$ is a Gaussian process on $U$ and $
\mathbb{E}(W(t)) = 0,$ $\text{Cov}\ (W(t)) = tQ,$ $t\geq 0.$ Let
$U_0 = Q^{\frac{1}{2}}U.$ Then $U_0$ is a Hilbert space equipped
with the inner product $(\cdot, \cdot)_0$, $$(u, v)_0
=\sum_{k=1}^{\infty}\frac{1}{\lambda_k}(u,e_k)_U(v,e_k)_U=
\left(Q^{-\frac{1}{2}}u, Q^{-\frac{1}{2}}v\right)_U,\;\forall\; u,
v\in U_0,$$ \ada{where $Q^{-\frac{1}{2}}$ is the pseudo-inverse of
$Q^{\frac{1}{2}}$}. Since $Q$ is a trace class operator, the
imbedding of $U_0$ in $U$ is Hilbert-Schmidt.

Let $\mathcal{L}_Q(U,V)$ denote the space of
all Hilbert-Schmidt operator from $U_0$ to $V$. Let
$\{e_j\}_{j=1}^{\infty}$,
$\{g_j\}_{j=1}^{\infty}=\{\lambda_j^{\frac{1}{2}}e_j\}_{j=1}^{\infty}$
and $\{f_j\}_{j=1}^{\infty}$ be orthonormal bases for $U,U_0$ and
$V$ respectively.  Then the space $\mathcal{L}_Q(U,V)$ is also a
separable Hilbert space, equipped with the norm
\begin{align}\label{for1}
\|\Psi\|^2_{\mathcal{L}_Q(U,V)}&=\sum_{h=1}^{\infty}\|\Psi
g_h\|_{V}^2=\sum_{h=1}^{\infty}\sum_{k=1}^{\infty}\left|\left(\Psi
g_h,f_k\right)_V\right|^2=\sum_{h=1}^{\infty}\sum_{k=1}^{\infty}\left|\left(\Psi
\lambda_h^{\frac{1}{2}}e_h,f_k\right)_V\right|^2\nonumber\\&=\sum_{h=1}^{\infty}\sum_{k=1}^{\infty}\lambda_h\left|\left(\Psi
e_h,f_k\right)_V\right|^2=\sum_{h=1}^{\infty}\sum_{k=1}^{\infty}\left|\left(\Psi
Q^{\frac{1}{2}}e_h,f_k\right)_H\right|^2\nonumber\\&=\sum_{h=1}^{\infty}\|\Psi
Q^{\frac{1}{2}}\|_{V}^2 =\mathrm{Tr}((\Psi
Q^{\frac{1}{2}})^*\Psi Q^{\frac{1}{2}})=\mathrm{Tr}\left(\Psi
Q^{\frac{1}{2}}(\Psi Q^{\frac{1}{2}})^*\right),
\end{align}
where we used the fact that for a Hilbert-Schmidt operator $S$,
$\mathrm{Tr}(S^*S)=\mathrm{Tr}(SS^*)$. The scalar product between
two operators $\Psi,\Phi\in\mathcal{L}_Q(U,V)$ is defined by
\begin{align}\label{for2}
\left(\Psi,\Phi\right)_{\mathcal{L}_Q(U,V)}=\mathrm{Tr}\left((\Psi
Q^{\frac{1}{2}})(\Phi Q^{\frac{1}{2}})^*\right).
\end{align}
Since the Hilbert spaces $U_0$ and $V$ are separable, the space
$\mathcal{L}_Q(U,V)$ is also separable. For an orthonormal basis
$\{e_j\}_{j=1}^{\infty}$ in $U$, an element $u\in U_0$ can be
represented as
$$u=\sum_{k=1}^{\infty}\left(u,\lambda_k^{\frac{1}{2}}e_k\right)_0\lambda_k^{\frac{1}{2}}e_k.$$
Let $\Psi\in\mathcal{L}(U,V)$ be considered as an operator from
$U_0$ to $V$, then we can write $\Psi u$ as
$$\Psi u=\sum_{k=1}^{\infty}\left(u,\lambda_k^{\frac{1}{2}}e_k\right)_0\lambda_k^{\frac{1}{2}}\Psi
e_k.$$ Then $\Psi$ has a finite Hilbert-Schmidt norm, since
\begin{align}\label{hsnorm}
\|\Psi\|_{\mathcal{L}_Q(U,V)}^2&=\sum_{k=1}^{\infty}\|\Psi
Q^{\frac{1}{2}}e_k\|_{V}^2=\sum_{k=1}^{\infty}\|\Psi
\lambda_{k}^{\frac{1}{2}}e_k\|_V^2=\sum_{k=1}^{\infty}\lambda_k\|\Psi
e_k \|_V^2\nonumber\\&\leq \sum_{k=1}^{\infty}\lambda_k\|\Psi
\|_{\mathcal{L}(U,V)}^2\|e_k\|_U^2=\mathrm{Tr}(Q)\|\Psi\|_{\mathcal{L}(U,V)}^2,
\end{align}
and hence $\mathcal{L}(U,V)\subset \mathcal{L}_Q(U,V)$. Hence if
$\Psi,\Phi\in\mathcal{L}(U,V)$, then from \eqref{for1} and
(\ref{for2}), we have
$$\|\Psi\|^2_{\mathcal{L}_Q(U,V)}=\mathrm{Tr}(\Psi
Q\Psi^*)\textrm{ and
}\left(\Psi,\Phi\right)_{\mathcal{L}_Q(U,V)}=\mathrm{Tr}(\Psi
Q\Phi^*).$$ For more details see Da Prato and Zabczyk \cite{da},
Gawarecki and Mandrekar \cite{GaMa} etc.

\begin{definition}
	A c\`{a}dl\`{a}g adapted process (paths are right continuous with
	left limits), $(\mathbf{L}_t)_{t\geq 0}$, is called a L\'{e}vy
	process if it has stationary independent increments and is
	stochastically continuous.
\end{definition}
Let $(\mathbf{L}_t)_{t\geq 0}$ be a $V$-valued L\'{e}vy process.
Hence, for every $\omega\in\Omega$, $\mathbf{L}_t(\omega)$ has
countable number of jumps on $[0, t]$. Note that for every
$\omega\in\Omega$, the jump $\triangle \mathbf{L}_t(\omega) =
\mathbf{L}_t(\omega) - \mathbf{L}_{t-}(\omega)$ is a point function
in $\mathscr{B}(V\backslash\{0\})$. Let us define 
\begin{align*}
N(t, Z)&= N(t, Z,
\omega)\\&:= \#\left\{s\in (0, \infty): \triangle\mathbf{L}_s(\omega)\in
Z\right\}, t>0, Z\in\mathscr{B}(V\backslash\{0\}), \omega\in\Omega
\end{align*}
as the \emph{Poisson random measure associated with the L\'{e}vy
	process} $(\mathbf{L}_t)_{t\geq 0}$.

The differential form of the measure $N(t,Z,\omega)$ is written as
$N(dt, dz)(\omega)$. We call $\tilde{N}(dt, dz) = N(dt, dz) -
\lambda(dz)dt $ a \emph{compensated Poisson random measure (cPrm)},
where $\lambda(dz)dt $ is known as \emph{compensator} of the
L\'{e}vy process $(\mathbf{L}_t)_{t\geq 0}$. Here $dt$ denotes the
Lebesgue measure on $\mathscr{B}(\mathbb{R}^{+})$, and $\lambda(dz)$
is a $\sigma$-finite L\'{e}vy measure on $(Z, \mathscr{B}(Z))$.
\begin{definition}
	Let $V$ and $F$ be separable Hilbert spaces. Let $F_t:=
	\mathscr{B}(V)\otimes\mathcal{F}_t$ be the product $\sigma$-algebra
	generated by the semi-ring $ \mathscr{B}(V)\times\mathcal{F}_t$ of
	the product sets $Z\times F,$ $Z\in\mathscr{B}(V),$ $F\in
	\mathcal{F}_t$ (where $\mathcal{F}_t$ is the filtration of the
	additive process $(\mathbf{L}_t)_{t\geq 0}$). Let $T>0$, define
	\begin{align*}
	\mathbb{H}(Z) = & \big\{g : \mathbb{R}^+ \times Z \times \Omega
	\rightarrow F,\text{ such that $g$ is}\;\; F_T/\mathscr{B}(F) \;\;
	measurable\;\;and \nonumber\\& \;\;\qquad g(t,z,\omega)\;\;is\;\;
	\mathcal{F}_t - adapted\;\;\forall z\in Z, \forall t\in (0,T]\big\}.
	\end{align*}
	For $p\geq1$, let us define,
	$$\mathbb{H}^{p}_\lambda([0,T]\times Z;F) = \left\{g\in \mathbb{H}(Z) :
	\int_0^T\int_Z\mathbb{E}[\|g(t,z,\omega)\|^p_F]\lambda(dz)dt < \infty \right\}.$$
\end{definition}
For more details see Mandrekar and R\"{u}diger \cite{MR}.

Let $S$ be a complete separable metric space with a metric $d$. Let
us denote by $D([0,T];S)$, the set of all $S$-valued functions
defined on $[0,T]$, which are right continuous and have left limits
(c\`{a}dl\`{a}g functions) for every $t\in[0,T]$. The space
$D([0,T];S)$ is endowed with the Skorokhod $J$-topology.

\begin{definition}(The $J$-topology on $D([0,T];S)$) This
	topology can be defined by the following metric $\delta_T$.
	\begin{align*}
		&\delta_T(x,y)\\&:=\inf_{\lambda\in\Lambda_T}\left[\sup_{t\in[0,T]}d(x(t),x\circ\lambda(t))+\sup_{t\in[0,T]}|t-\lambda(t)|+\sup_{s\neq
			t}\left|\log\left(\frac{\lambda(t)-\lambda(s)}{t-s}\right)\right|\right],
	\end{align*}
	where $\Lambda_T$ is the set of increasing homeomorphisms of
	$[0,T]$.
	
	A sequence $(x_n)_{n\in\mathbb{N}}$ in $D([0,T];S)$ converges to $x$
	in $D([0,T];S)$ if and only if there exists a sequence $(\lambda_n)$
	of homeomorphisms of $[0,T]$ such that
	$\mathlarger{\lim_n}\;\lambda_n$ is equal to identity mapping and
	$x=\mathlarger{\lim_n}\; x_n\circ\lambda_n$, both convergence being
	uniform on $[0,T]$.
\end{definition}
The Skorokhod topology relativized to $C([0,T];S)$ coincides with
the uniform topology there. It should be remarked that, if a
sequence $(x_n)$ in $D([0,T];S)$ converges to $x$ for the metric
$\delta_T$, then $\mathlarger{\lim_n}\;x_n(T)=x(T).$ The space
$D([0,T];S)$ is separable and complete. For more details see Chapter
2, M\'{e}tivier \cite{metivier} and Chapter 3, Billingsley \cite{BiP}.

\begin{remark}
	Let us define $D([0,T];V)$ as the space of all c\`{a}dl\`{a}g paths
	from $[0,T]$ into $V$, where $V$ is a Hilbert space.
\end{remark}

\begin{definition} Let $M$ be a V-valued square integrable martingale on
	$(\Omega,\mathcal{F},(\mathcal{F}_{t})_{t\geq 0},\mathbb{P})$, with
	right continuous paths. Then there exists two real right continuous
	increasing processes $[M]$ and $\triangleleft M\triangleright$ with
	$0=[M]_0=\triangleleft M\triangleright_0$ such that
	\begin{align}
	\|M_t\|_V^2=\|M_0\|_V^2+2\int_0^t(M_{s-},\d M_s)_V+[M]_t.
	\end{align}
	$\triangleleft M\triangleright$ is the unique real right continuous
	increasing predictable process such that
	\begin{align}
	\|M_t\|_V^2-\|M_0\|_V^2-\triangleleft M\triangleright_t \textrm{ is
		a martingale.}
	\end{align}
	Here $[M]$ is called the quadratic variation of $M$ and
	$\triangleleft M\triangleright$ the Meyer process of $M$.
\end{definition}
\begin{remark}
	If $M$ is continuous, then we have $\triangleleft
	M\triangleright=[M]$.
\end{remark}
\begin{example}
	Let $V$ be Hilbert spaces and let $Q:V\to V$ be a trace class
	operator. Let $$\d u(t)=\sigma(t,u)\d
	W(t)+\mathlarger{\int_Z}g(u(t-),z)\tilde{N}(\d t,\d z),$$ where
	$W(\cdot)$ is a $V$-valued Wiener process,
	$\sigma(\cdot,\cdot):[0,t)\times V\to\mathcal{L}_Q(V,V)$, $Z\in\mathscr{B}(V\backslash\{0\})$, $g(\cdot,\cdot) :V\times Z\to V$ and
	$\tilde{N}(\cdot,\cdot)$ is the compensated Poisson random measure.
 From M\'{e}tivier \cite{metivier}, Sakthivel and Sritharan \cite{sakthivel} and Manna, Manil and Sritharan \cite{MMS}, we observe that the quadratic variation process of $u$ is
	$$[u]_t=\int_0^t\|\sigma(s,u)\|_{\mathcal{L}_Q(V,V)}^2\d
	s+\int_0^t\int_Z\|g(u,z)\|_V^2 N(\d s,\d z)   $$ and the Meyer
	process of $u$ is
	$$\triangleleft u\triangleright_t=\int_0^t\|\sigma(s,u)\|_{\mathcal{L}_Q(V,V)}^2\d
	s+\int_0^t\int_Z\|g(u,z)\|_V^2 \lambda(\d z)\d s,$$ 
	and finally by the martingale property one obtains that the expectation of the quadratic variation process and that of Meyer process are same. This yields 
	\begin{align}\label{mt1}
	\mathbb{E}\left[\int_0^t\int_Z\|g(u,z)\|_V^2N(\d s,\d
	z)\right]= \mathbb{E}\left[\int_0^t\int_Z\|g(u,z)\|_V^2\lambda(\d z)\d s\right].
	\end{align}
\end{example}

\begin{lemma}(Burkholder-Davis-Gundy Inequality)\label{burk}
	Let $M$ be a $V$- valued c\`{a}dl\`{a}g martingale with
	$M_0=0$ and let $p\geq 1$ be fixed. Then for any
	$\mathcal{F}$-stopping time $\tau$, there exists constants $c_p$ and
	$C_p$ such that
	$$\mathbb{E}\left\{[M]_{\tau}^{p/2}\right\}\leq c_p\mathbb{E}\left\{\sup_{0\leq t\leq\tau}\|M_t\|_V^p\right\}\leq C_p
	\mathbb{E}\left\{[M]_{\tau}^{p/2}\right\}$$ for all $\tau$, $0\leq
	\tau\leq \infty$, where $[M]$ is the quadratic variation of process
	$M$. The constants
	are universal, they do not depend on the choice of $M$.
\end{lemma}
\noindent For proof see Theorem 1.1 of Marinelli and R\"{o}ckner
\cite{MCRM}. For real-valued c\`{a}dl\`{a}g martingales see Theorem
3.50 of Peszat and Zabczyk \cite{PZ}.

\subsection{Stochastic Tide Model}
Let us consider $U=V=\Ll$. Then with the above functional setting, we recall the stochastic tidal dynamics equation \eqref{me1}-\eqref{me3} with the L\'evy forcing
where $u_0\in L^2(\Omega; \Ll(\mathcal{O}))$, and $\z_0\in L^2(\Omega; L^2(\mathcal{O}))$. The operators $A$ and $B$ are defined through \eqref{eq3} and \eqref{eq4} respectively. $(W(t))_{t\geq 0}$ is a $\Ll(\mathcal{O})$-valued Wiener process with trace class covariance. $H(\cdot,\cdot)$ is a measurable mapping from $\h\times Z$ into $\Ll$.

\begin{assumption} \label{hypo.noi}
We assume that $\s$ and $H$ satisfy the following hypotheses:
\begin{enumerate}\label{Hyp}
\item[H.1] $\s\in C([0,T]\times\h(\mathcal{O});L_Q(\mathbb{L}^2,\Ll)), H\in\mathbb{H}^2_\lambda([0,T]\times Z;\Ll(\mathcal{O})).$
\item[H.2] For all $t\in(0,T)$, there exists a positive constant $K$ such that for all $u\in\Ll(\mathcal{O})$ 
\begin{equation*}
\|\s(t,u)\|_{L_Q(\mathbb{L}^2,\Ll)}^2+\int_Z\|H(u,z)\|_{\Ll}^2\la\leq K(1+\|u\|_{\Ll}^2).
\end{equation*}
\item[H.3] For all $t\in(0,T)$, there exists a positive constant $L$ such that for all $u,v\in\Ll(\mathcal{O})$  
\begin{equation*}
\|\s(t,u)-\s(t,v)\|_{L_Q(\mathbb{L}^2,\Ll)}^2+\int_Z \|H(u,z)-H(v,z)\|_{\Ll}^2\la 
\leq L\|u-v\|_{\Ll}^2.
\end{equation*}
\end{enumerate}
\end{assumption}
\begin{lemma}\label{Monotone lemma}
	Let $F(\cdot): \h\to\mathbb{H}^{-1}$ be a nonlinear operator defined by $F(u):=Au+B(u)-f$. Then for $u, v\in\h\mathcal{(O)}$,
	\begin{align}\label{monotone}
&	\langle F(u)-F(v), hu-hv\rangle+\Lambda\|u-v\|_{\h}^2
	\geq 0 ,
	\end{align}
	where $\Lambda:=\Lambda(\alpha, \epsilon, M)$ is a real constant and $\epsilon$ and $M$ are defined in \eqref{bddh} and $\alpha$ is defined via the definition of the operator $A.$ 
\end{lemma}
\begin{proof}
	From the definition of the operator $A$, assumptions \eqref{bddh} on $h(x)$, applying Young's and Poincar\'e inequalities, we obtain
	\begin{align*}
	\langle Au, hu\rangle &= \alpha(\nabla u, h\nabla u)_{\Ll}+\alpha(\nabla u, u\nabla h)_{\Ll}\geq \alpha\epsilon\|\nabla u\|_{\Ll}^2-\alpha M\|\nabla u\|_{\Ll}\| u\|_{\Ll}\\
	& \geq \dfrac{\alpha\epsilon}{2}\|\nabla u\|_{\Ll}^2 - \dfrac{\alpha M^2}{2\epsilon}\| u\|_{\Ll}^2 \geq  \left(\dfrac{\alpha\epsilon}{2} - \dfrac{\alpha M^2}{2\epsilon \lambda_1}\right)\| u\|_{\h}^2,
	\end{align*} where $\lambda_1$ is the first eigenvalue of $-\triangle.$ 
Hence for any $u, v\in\h(\mathcal{O})$,
$$ \langle Au-Av, hu-hv\rangle \geq \left(\dfrac{\alpha\epsilon}{2} - \dfrac{\alpha M^2}{2\epsilon \lambda_1}\right)\| u-v\|_{\h}^2.$$
From Lemma \ref{mon}, we can easily show that for $u, v\in \h(\mathcal{O})\subset\mathbb{L}^4(\mathcal{O})$
$$\langle B(u)-B(v), hu-hv\rangle \geq0.$$
Hence \eqref{monotone} is obvious with $\Lambda:=\mathlarger{\frac{\alpha}{2\epsilon }(\frac{M^2}{\lambda_1}-\epsilon^2)}.$
\end{proof}
\begin{remark}\label{monotone4} (monotonicity in $\h$):
	 If we assume $\Lambda\,<\,0$, then the operator $F(\cdot)$ is monotone on $\h(\mathcal{O}).$ 
	  Moreover, since $F(\cdot)$ is hemicontinuous operator, it is maximal monotone from $\h(\mathcal{O})$ to $\mathbb{H}^{-1}(\mathcal{O})$ (see Theorem 1.3 of Barbu \cite{VB}).  

	However, in general, if we do not assume negativity of the constant $\Lambda,$ 
	then 
the operator $F(\cdot)$ is $\Lambda$ - monotone on $\h(\mathcal{O})$
i.e.  $F+\Lambda I$ is monotone on $\h(\mathcal{O}).$
\end{remark} 
\begin{remark}\label{monotone2} (monotonicity in $\Ll$):
	Use of Young's inequality with different parameters can lead to 	\begin{align*}
	\langle Au, hu\rangle &= \alpha(\nabla u, h\nabla u)_{\Ll}+\alpha(\nabla u, u\nabla h)_{\Ll}\geq \alpha\epsilon\|\nabla u\|_{\Ll}^2-\alpha M\|\nabla u\|_{\Ll}\| u\|_{\Ll}\\
	& \geq \dfrac{\alpha\epsilon}{2}\|\nabla u\|_{\Ll}^2 - \dfrac{\alpha\epsilon}{2}\|\nabla u\|_{\Ll}^2 - \dfrac{\alpha M^2}{2\epsilon}\| u\|_{\Ll}^2, 
	\end{align*}
	which in turn yields
		 \begin{align}\label{monotone1}
		 &	\langle F(u)-F(v), hu-hv\rangle
		 + \dfrac{\alpha M^2}{2\epsilon}\|u-v\|_{\Ll}^2 \geq 0.
		\end{align}
		Hence $F(\cdot)$ is $\Lambda_1$ - monotone on $\Ll(\mathcal{O})$ (with $\Lambda_1:=\dfrac{\alpha M^2}{2\epsilon}>0$), and this interesting property will be exploited in the proof of existence theorem in the next Section.
\end{remark}


\section{Energy Estimates and Existence Result}\label{estimate}


Let  $\{e_{j}:j \in \mathbb{N}\}$ be an orthonormal basis in $\Ll(\mathcal{O})$ such that span$\{e_{j}:j \in \mathbb{N}\}$ is dense in $\h(\mathcal{O}).$ Let us fix $n \in \mathbb{N}.$ Let $P_n$ denote the projection of $\mathbb{H}^{-1}(\mathcal{O})$ onto $\Ll_n(\mathcal{O}):=$\,span\,$\{e_{1},e_{2},\cdots ,e_{n}\}$ defined by
$$ P_nu:=\sum_{i=1}^n \langle u,e_i \rangle e_i,\quad u\in \mathbb{H}^{-1}(\mathcal{O}).$$ 
Note that the restriction of $P_n$ to $\Ll(\mathcal{O}),$ still denoted by $P_n,$ is given by
$$ P_nu:=\sum_{i=1}^n (u,e_i)_{\Ll} e_i,\quad u \in \Ll(\mathcal{O}),  $$
and thus it is the $(\cdot,\cdot)_{\Ll}-$orthogonal projection onto $\Ll_n(\mathcal{O}).$

In similar manner we can denote the orthogonal projection of $L^2(\mathcal{O})$ to $L^2_n(\mathcal{O})$ by the same notation $P_n.$

\begin{lemma} \label{Pn.lem} For every $u \in \Ll(\mathcal{O}),\, v \in L^2(\mathcal{O}), w \in \h(\mathcal{O}),$ 
\begin{align*}
\lim_{n \rightarrow \infty}\|P_nw -w\|_{\h}=0,\lim_{n \rightarrow \infty}\|P_nu -u\|_{\Ll}=0, \lim_{n \rightarrow \infty}\|P_nv -v\|_{L^2}=0.
\end{align*}
\end{lemma}
\begin{proof}
See Lemma 2.4 of \cite{BM}.
\end{proof}

For every $n \in \mathbb{N},$ we consider the finite dimensional system of SDEs in variational form on $\Ll_n(\mathcal{O})$ and $L^2_n(\mathcal{O})$ respectively given by
\begin{align}
\label{eqn5}
&du^n(t)+\Big(Au^n(t)+B(u^n(t))+g\nabla\z^n(t)\Big)dt =\ada{f}(t)dt\nonumber\\&\quad+\s^n(t,u^n(t))dW^n(t)
+\int_Z H^n(u^n(t-),z) \n \\
\label{eqn6.1}
&d\z^n(t)+ Div(hu^n(t)) dt=0,\\
&u^n(0)=P_nu_0=u^n_0,\quad \z^n(0)=P_n\z_0=\z^n_0,\label{eqn6}
\end{align} 
where we have denoted $W^{n}=P_{n} W,$ $\sigma^{n}=P_{n}\sigma,$ and 
$\int_{Z}H^{n}(\cdot,z)\tilde {N}(dt,dz)=P_n\int_{Z}H(\cdot,z)$ $\tilde N(dt,dz)$ with $H^n=P_nH.$ Due to ``weak" nonlinearity of $B$, it is straightforward to show that the range of $B(u^n)$ and that of $P_nB(u^n)$ are $\Ll_n(\mathcal{O})$, and hence we write $P_nB(u^n)$ as $B(u^n)$. For simplicity, we write $P_nf$ as $f$ in equation \eqref{eqn5}, since due to Lemma \ref{Pn.lem} $P_nf \rightarrow f$ in $L^2(0,T;\Ll(\mathcal{O}))$.
\begin{remark}
We note that \eqref{e1} ensures that $B$ is locally Lipschitz and Assumption \ref{Hyp} guarantees that the noise coefficients $\sigma^{n}$ and $H^{n}$ are globally Lipschitz.
Hence, we infer that for all $n\geq1,$ there exist adapted processes $u^n \in D([0,T];\Ll_n(\mathcal{O}))$ a.s. and $\z^n \in C([0,T];L^2_n(\mathcal{O}))$ a.s. such that \eqref{eqn5}-\eqref{eqn6} are satisfied. For proof see Albeverio et. al \cite{ABW}.
\end{remark}


\begin{proposition}
\label{prop}
Under the above mathematical setting, let
\begin{equation}
\left\{\begin{array}{l}
w^0\in L^4(\Omega; L^4(0,T;\mathbb{L}^4(\mathcal{O}))),\;f\in  L^2(\Omega; L^2(0,T;\Ll(\mathcal{O}))),\\
\s\in C([0,T]\times\h(\mathcal{O});L_Q(\mathbb{L}^2,\Ll)),\;H\in\mathbb{H}^2_\lambda([0,T]\times Z;\Ll(\mathcal{O})),\\
u_0\in L^2(\Omega;\Ll(\mathcal{O})),\z_0\in  L^2(\Omega;L^2(\mathcal{O})).
\end{array}\right.
\end{equation}
If $(u^n(t),\z^n(t))$ denotes the unique strong solution of the system \eqref{eqn5}-\eqref{eqn6}, then we have the following a-priori estimates:
\begin{align}
\label{eq14}
\mathbb{E}[\|u^n(t)\|_{\Ll}^2+\|\z^n(t)\|_{L^2}^2]+2\alpha\mathbb{E}[\int_0^t\|u^n(s)\|_{\h}^2ds]\leq C_{1(2)} \;\;\forall t\in[0,T],\\
\label{eq15}
\mathbb{E}[\sup\limits_{0\leq t\leq T}(\|u^n(t)\|_{\Ll}^2 + \|\z^n(t)\|_{L^2}^2)]+2\alpha\mathbb{E}[\int_0^T\|u^n(t)\|_{\h}^2dt]\leq C_{2(2)},
\end{align}
where the constants $C_{1(2)}$ and $C_{2(2)}$ depend on the coefficients $\alpha, g,M,\mu$ and the norms $\|f\|_{ L^2(\Omega; L^2(0,T;\Ll))},$ $\|w^0\|_{L^4(\Omega; L^4(0,T;\mathbb{L}^4))},$ $\|u_0\|_{L^2(\Omega;\Ll)}$, $\|\z_0\|_{L^2(\Omega;L^2)}$ and $T$.
\end{proposition}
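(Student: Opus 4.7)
The plan is to run the standard energy method for the finite-dimensional Galerkin system \eqref{eqn5}--\eqref{eqn6}, choosing a weighted test function on the $\z^n$-equation so that the pressure-type coupling $g\nabla\z^n$ cancels with $\mathrm{Div}(hu^n)$ up to a lower-order term controlled by $|\nabla h|\le M$. Concretely, I would apply the finite-dimensional It\^o formula to $\|u^n(t)\|_{\Ll}^2$ (using $v^n=u^n$ in \eqref{eqn5}), and simultaneously multiply the deterministic equation \eqref{eqn6} by $g\z^n/h$ (a legitimate test function since $h\ge\varepsilon>0$), yielding
\begin{equation*}
\tfrac{1}{2}\tfrac{d}{dt}\bigl(g\z^n/h,\z^n\bigr)_{L^2}
= -\bigl(g\z^n,\mathrm{Div}(u^n)\bigr)_{L^2}-\bigl(g\z^n\nabla h/h,u^n\bigr)_{\Ll}.
\end{equation*}
Integration by parts (using $u^n=0$ on $\partial\mathcal{O}$) shows that the first term on the right exactly cancels the coupling contribution $-2(g\nabla\z^n,u^n)_{\Ll}$ coming from the It\^o expansion, so the natural energy to track is
\begin{equation*}
\mathcal{E}^n(t):=\|u^n(t)\|_{\Ll}^2+\bigl(g\z^n(t)/h,\z^n(t)\bigr)_{L^2},
\end{equation*}
which is equivalent to $\|u^n\|_{\Ll}^2+\|\z^n\|_{L^2}^2$ because $\varepsilon\le h\le\mu$.

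Next I would bound the remaining terms. The coercivity of $a(\cdot,\cdot)$ produces $-2\alpha\|u^n\|_{\h}^2\,dt$. For the nonlinear term $\langle B(u^n),u^n\rangle$, writing $u^n=(u^n+w^0)-w^0$ gives
\begin{equation*}
\langle B(u^n),u^n\rangle=\int_{\mathcal{O}}\gamma|u^n+w^0|^3\,dx-\int_{\mathcal{O}}\gamma|u^n+w^0|(u^n+w^0)\cdot w^0\,dx,
\end{equation*}
and Young's inequality $|u^n+w^0|^2|w^0|\le\tfrac{2}{3}|u^n+w^0|^3+\tfrac{1}{3}|w^0|^3$ yields $\langle B(u^n),u^n\rangle\ge -\tfrac{1}{3}\int_{\mathcal{O}}\gamma|w^0|^3\,dx$, which is integrable in $t$ since $w^0\in L^4(0,T;\mathbb{L}^4(\mathcal{O}))$ and $\mathcal{O}$ is bounded. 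The coupling remainder is estimated by $|2(g\z^n\nabla h/h,u^n)_{\Ll}|\le C(M,g,\varepsilon)(\|\z^n\|_{L^2}^2+\|u^n\|_{\Ll}^2)$, the forcing by $2(f,u^n)_{\Ll}\le\|f\|_{\Ll}^2+\|u^n\|_{\Ll}^2$, and the two noise covariation terms $\|\s^n(t,u^n)\|_{L_Q}^2+\int_Z\|H^n(u^n,z)\|_{\Ll}^2\,\la$ by the linear growth hypothesis H.2.

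Taking expectation kills the three martingale pieces (the Wiener stochastic integral and the compensated Poisson stochastic integral, both evaluated in the $\Ll$-inner product with $u^n$), and Gr\"onwall's inequality applied to $t\mapsto\E\mathcal{E}^n(t)$ yields the first estimate \eqref{eq14}. For the pathwise-supremum estimate \eqref{eq15}, I would first take the supremum in $t$ before taking expectation, so the martingale terms must be controlled by the Burkholder--Davis--Gundy inequality: for the Wiener term,
\begin{equation*}
\E\sup_{0\le s\le t}\Bigl|\int_0^s 2(u^n,\s^n dW^n)\Bigr|
\le C\,\E\Bigl(\int_0^t\|u^n\|_{\Ll}^2\|\s^n\|_{L_Q}^2\,ds\Bigr)^{1/2}
\le \tfrac{1}{2}\E\sup_{s\le t}\|u^n\|_{\Ll}^2+C\E\int_0^t(1+\|u^n\|_{\Ll}^2)\,ds,
\end{equation*}
and an analogous BDG bound for the compensated Poisson integral. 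Absorbing the half-sup on the left and applying Gr\"onwall again gives \eqref{eq15}.

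The main technical obstacle I anticipate is the careful bookkeeping of the coupling cancellation between the two equations, because $h$ is non-constant: one must introduce the weight $1/h$ in testing the $\z^n$-equation and then keep track of the extra $\nabla h$ term without losing the positive $\alpha\|u^n\|_{\h}^2$ coercivity contribution. The other delicate point is the BDG step for the jump integral, where one uses $\int_0^t\int_Z|(u^n,H^n(u^n,z))_{\Ll}|^2\,\la\,ds\le\|u^n\|_{L^\infty_t\Ll}^2\int_0^t\int_Z\|H^n\|_{\Ll}^2\,\la\,ds$ and splits the $\sup$-norm via Young to absorb it into the left-hand side.
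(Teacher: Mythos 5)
Your overall skeleton (It\^o formula for $\|u^n\|_{\Ll}^2$, linear growth H.2 for the noise covariations, expectation plus Gr\"onwall for \eqref{eq14}, BDG plus Young absorption plus Gr\"onwall for \eqref{eq15}) is the same as the paper's, and your lower bound for the nonlinearity via $\langle B(u^n),u^n\rangle\ge-\tfrac13\int_{\mathcal O}\gamma|w^0|^3\,dx$ is a perfectly good alternative to the paper's use of Lemma \ref{mon}. The genuine gap is in your central device for the coupling. You propose to test the $\z^n$-equation with $g\z^n/h$ and claim this is ``a legitimate test function since $h\ge\varepsilon>0$''. But \eqref{eqn6} is a Galerkin equation: it only holds against $\zeta\in L^2_n(\mathcal O)$, i.e.\ $\dot\z^n=-P_n\,Div(hu^n)$, and $g\z^n/h$ does not belong to $L^2_n(\mathcal O)$ for a general basis, since multiplication by $1/h$ does not preserve the finite-dimensional subspace. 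What you actually get is $(\dot\z^n,g\z^n/h)_{L^2}=-(Div(hu^n),P_n(g\z^n/h))_{L^2}$, so the claimed exact cancellation with $-2(g\nabla\z^n,u^n)_{\Ll}$ is off by the commutator term $(Div(hu^n),(I-P_n)(g\z^n/h))_{L^2}$, which is not controlled uniformly in $n$. Since the statement to be proved is precisely an $n$-uniform estimate on the Galerkin solutions, this step as written fails; the cancellation is only available for the limit equation.

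The fix is what the paper does, and it costs nothing essential: test \eqref{eqn6} with $\z^n$ itself (which is in $L^2_n$), estimate $|(Div(hu^n),\z^n)_{L^2}|\le\mu\|u^n\|_{\h}\|\z^n\|_{L^2}+M\|u^n\|_{\Ll}\|\z^n\|_{L^2}$ using $Div(hu^n)=h\,Div(u^n)+u^n\cdot\nabla h$, and estimate the coupling $|g(\nabla\z^n,u^n)_{\Ll}|=|g(\z^n,Div\,u^n)_{L^2}|$ by Young's inequality; each of these eats a piece $\tfrac{\alpha}{2}\|u^n\|_{\h}^2$ of the coercivity, leaving a positive multiple of $\|u^n\|_{\h}^2$ on the left and Gr\"onwall-able terms $\|u^n\|_{\Ll}^2+\|\z^n\|_{L^2}^2$ on the right, with constants depending on $g,\mu,M,\alpha$ exactly as in the statement. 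One more (minor) omission: before taking expectations you should localize with stopping times $\tau_N$ (as the paper does) so that the Wiener and compensated Poisson integrals are honest martingales with zero mean and the Poisson quadratic term can be compensated, and then let $N\to\infty$; your argument silently assumes this integrability. With these two repairs your proof goes through and is otherwise equivalent to the paper's.
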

\begin{proof}
Define the stopping time as:
\begin{equation}
\tau_N=\inf\{t \geq 0:\|u^n(t)\|_{\Ll}^2+\|\z^n(t)\|_{L^2}^2+\int_0^t\|u^n(s)\|^2_{\mathbb{H}_0^1}ds>N\}.
\end{equation}
Applying It\^o's lemma (for reference, see Theorem 3.7.2 of Mandrekar and
R\"{u}diger \cite{MaRu1}, Theorem 4.4 of R\"{u}diger and Ziglio
\cite{RuZ}, Section 2.3, M\'{e}tivier \cite{metivier}) \ada{to the process $\|u^n(t)\|_{\Ll}^2$} 
\begin{align}
\label{eqn 7}
&\|u^n(\L)\|_{\Ll}^2+ 2\alpha\int_0^{\L} \|u^n(s)\|^2_{\mathbb{H}_0^1}ds+2\int_0^{\L}(B(u^n(s)),u^n(s))_{\Ll}ds \notag \\
& \quad +2\int_0^{\L}(g\nabla\z^n(s) ,u^n(s))_{\Ll}ds
=\|u^n_0\|_{\Ll}^2+2\int_0^{\L}(f(s),u^n(s))_{\Ll}ds\notag\\
&\quad+2\int_0^{\L}(\s^n(s,u^n(s))dW^n(s),u^n(s))_{\Ll}+\int_0^{\L}\|\s^n(s,u^n(s))\|_{L_Q(\mathbb{L}^2;\Ll)}^2ds \notag\\
& \quad+\int_0^{\L}\int_Z\| H^n(u^n(s-),z)\|_{\Ll}^2 N(ds,dz)\notag\\
& \quad +2\int_0^{\L}\int_Z (H^n(u^n(s-),z),u^n(s-))_{\Ll}\N .
\end{align}
Using the definition of the operator $B(\cdot)$ and Lemma \ref{mon}
\begin{IEEEeqnarray*}{rCl} \label{B.W0.2}
(B(u^n(t)),u^n(t))_{\Ll}&\geq -&\int_\mathcal{O}\gamma(x)|w^0(t)|w^0(t)\,u^n(t)dx\\
 &\geq & -\dfrac{r}{\epsilon}\|w^0(t)\|_{\mathbb{L}^4}^2\|u^n(t)\|_{\Ll}\\
&\geq & -\dfrac{r}{2\epsilon}[\|w^0(t)\|_{\mathbb{L}^4}^4+\|u^n(t)\|_{\Ll}^2].\IEEEyesnumber
\end{IEEEeqnarray*}
Using the Divergence Theorem and the inequality $2ab\leq \delta a^2 + \dfrac{1}{\delta}b^2$ 
we obtain,
\begin{align} \label{eq26}
 & |g(\nabla\z^n(t) ,u^n(t))_{\Ll}| =|-g(\z^n(t),Div (u^n(t)))_{L^2}| \notag \\
&\leq \dfrac{g}{2}[\dfrac{2g}{\alpha}\|\z^n(t)\|_{L^2}^2+\dfrac{\alpha}{2g}\|Div (u^n(t))\|_{L^2}^2] \notag \\
&\leq \dfrac{g}{2}[\dfrac{2g}{\alpha}\|\z^n(t)\|_{L^2}^2+\dfrac{\alpha}{2g}\| u^n(t)\|_{\mathbb{H}_0^1}^2].
\end{align}
Using Cauchy-Schwarz inequality
\begin{equation}
|(f(t),u^n(t))_{\Ll}|\leq\dfrac{1}{2}[\|f(t)\|_{\Ll}^2+\|u^n(t)\|_{\Ll}^2].
\end{equation}
Hence the energy equality \eqref{eqn 7} yields
\begin{align}
\label{eq8}
&\|u^n(\L)\|_{\Ll}^2+ 2\alpha\int_0^{\L}\|u^n(s)\|^2_{\mathbb{H}_0^1} ds \notag \\
&\leq  \int_0^{\L}\|f(s)\|_{\Ll}^2 ds 
 + \int_0^{\L}\|u^n(s)\|_{\Ll}^2 ds +\dfrac{2g^2}{\alpha}\int_0^{\L} \|\z^n(s)\|_{L^2}^2ds \notag \\
&\quad +\dfrac{r}{\epsilon}\int_0^{\L}\Big[\|w^0(s)\|_{\mathbb{L}^4}^4+\|u^n(s)\|_{\Ll}^2\Big] ds 
+\dfrac{\alpha}{2}\int_0^{\L}\| u^n(s)\|_{\mathbb{H}_0^1}^2ds \notag \\
&\quad+2\int_0^{\L}(\s^n(s,u^n(s))dW^n(s),u^n(s))_{\Ll}
+\int_0^{\L}\|\s^n(s,u^n(s))\|_{L_Q(\mathbb{L}^2;\Ll)}^2ds \notag \\
&\quad+\int_0^{\L}\int_Z\| H^n(u^n(s-),z)\|^2_{\Ll} N(ds,dz) +\|u^n_0\|_{\Ll}^2 \notag\\
&\quad+2\int_0^{\L}\int_Z (H^n(u^n(s-),z),u^n(s-))_{\Ll}\N.
\end{align}
Using equation \eqref{eqn6.1}
\begin{equation} \label{ode.z}
\dfrac{1}{2}\dfrac{d}{dt}\|\z^n(t)\|_{L^2}^2=-(Div(hu^n(t)),\z^n(t))_{L^2}.
\end{equation}
Now by using H\"{o}lder's inequality, assumptions on $h$ in \eqref{bddh} and Young's inequality we get
\begin{align} \label{div.esti}
&|(Div(hu^n(t)),\z^n(t))_{L^2}|=|(h\,Div(u^n(t)),\z^n(t))_{L^2}+(u^n(t)\cdot\nabla h,\z^n(t))_{L^2}|\notag\\
&\leq |(h\,Div(u^n(t)),\z^n(t))_{L^2}|+|(u^n(t)\cdot\nabla h,\z^n(t))_{L^2}|\notag\\
&\leq \| h\|_{L^\infty}\|Div(u^n(t))\|_{L^2}\|\z^n(t)\|_{L^2}+\|u^n(t)\|_{\Ll}\|\nabla h\|_{\mathbb L^\infty}\|\z^n(t)\|_{L^2}\notag\\
&\leq \mu\| u^n(t)\|_{\mathbb{H}_0^1}\|\z^n(t)\|_{L^2} +M\|u^n(t)\|_{\Ll}\|\z^n(t)\|_{L^2}\notag\\
&\leq \dfrac{\mu}{2}\left(\dfrac{\alpha}{2\mu}\|u^n(t)\|_{\mathbb{H}_0^1}^2 +\dfrac{2\mu}{\alpha}\|\z^n(t)\|_{L^2}^2\right)+\dfrac{M}{2}[\|u^n(t)\|_{\Ll}^2+\|\z^n(t)\|_{L^2}^2].
\end{align}
Thus integrating \eqref{ode.z} in $[0,\L]$ and using \eqref{div.esti} we get
\begin{align}\label{eq9} \|\z^n(\L)\|_{L^2}^2 &\leq \|\z^n(0)\|_{L^2}^2+ M\int_0^{\L}\|u^n(s)\|_{\Ll}^2 ds\nonumber \\
&\quad+(\dfrac{2\mu^2}{\alpha}+M)\int_0^{\L}\|\z^n(s)\|_{L^2}^2 ds+\dfrac{\alpha}{2}\int_0^{\L}\|u^n(s)\|_{\mathbb{H}_0^1}^2 ds.
\end{align}
Adding equations \eqref{eq8} and \eqref{eq9}
\begin{align}
\label{eq11}
&\| u^n  (t\wedge\tau_N)\|_{\Ll}^2+\|\z^n(t\wedge\tau_N)\|_{L^2}^2+\alpha\int_0^{t\wedge\tau_N}\|u^n(s)\|^2_{\mathbb{H}_0^1}ds\notag\\
&\leq  (1+M+\dfrac{r}{\epsilon})\int_0^{t\wedge\tau_N}\|u^n(s)\|_{\Ll}^2 ds 
+ (\dfrac{2g^2}{\alpha}+\dfrac{2\mu^2}{\alpha}+M)\int_0^{t\wedge\tau_N}\|\z^n(s)\|_{L^2}^2 ds\notag\\
&\quad+\dfrac{r}{\epsilon}\int_0^{t\wedge\tau_N}\|w^0(s)\|_{\mathbb{L}^4}^4 ds + \int_0^{t\wedge\tau_N}\|f(s)\|_{\Ll}^2 ds +\int_0^{t\wedge\tau_N}\|\s^n(s,u^n(s))\|_{L_Q(\mathbb{L}^2;\Ll)}^2 ds \notag\\
 &\quad+2\int_0^{t\wedge\tau_N}(\s^n(s,u^n(s))dW^n(s),u^n(s))_{\Ll} +\|u^n_0\|_{\Ll}^2 +\|\z^n_0\|_{L^2}^2 \notag\\
  &\quad+\int_0^{t\wedge\tau_N}\int_Z\| H^n(u^n(s-),z)\|^2_{\Ll} N(ds,dz) \notag\\
 &\quad+\int_0^{t\wedge\tau_N}2\int_Z (H^n(u^n(s-),z),u^n(s-))_{\Ll}\ns. 
\end{align}
Let $C=\max \{1+M+\dfrac{r}{\epsilon},\dfrac{2g^2}{\alpha}+\dfrac{2\mu^2}{\alpha}+M\}.$ We note that the expectation of the quadratic variation process and that of Meyer process are same, i.e.
\begin{align}
\label{eq12}
&\mathbb{E}\left[\int_0^t\int_Z \| H^n(u^n(s-),z)\|^2_{\Ll} N(ds,dz)\right]  \notag\\ 
& =\mathbb{E}\left[\int_0^t\int_Z\| H^n(u^n(s),z)\|^2_{\Ll} \la ds\right]. 
\end{align}
and the stochastic integrals
\begin{align*}
&\int_0^{t\wedge\tau_N}(\s^n(s,u^n(s))dW^n(s),u^n(s))_{\Ll},\\ &\int_0^{t\wedge\tau_N}\int_Z (H^n(u^n(s-),z),u^n(s-))_{\Ll}\ns
\end{align*}
are local martingales with zero averages. 
Furthermore, we observe that \begin{align} \label{eq.sig.n}
&\|\s^n(s,u^n(s))\|_{L_Q(\Ll;\Ll)}\leq c\|P_n\|_{\mathcal{L}(\Ll, \Ll)} \|\s(s,u^n(s))\|_{L_Q(\Ll;\Ll)} \leq c \|\s(s,u^n(s))\|_{L_Q(\Ll;\Ll)},
\end{align}
and similarly
\begin{align} \label{eq.sig.n1}
\| H^n(u^n(s),z)\|_{\Ll} \leq c\|P_n\|_{\mathcal{L}(\Ll, \Ll)} \| H(u^n(s),z)\|_{\Ll} \leq c \| H(u^n(s),z)\|_{\Ll}.
\end{align}
Hence after taking expectation of equation \eqref{eq11}, using \eqref{eq.sig.n},  \eqref{eq.sig.n1} and Assumption \ref{Hyp} we obtain the following 
\begin{align*}
&\mathbb{E}\Big[\|u^n(t\wedge\tau_N)\|_{\Ll}^2+\|\z^n(t\wedge\tau_N)\|_{L^2}^2\Big]+\alpha\mathbb{E}\left[\int_0^{t\wedge\tau_N}\|u^n(s)\|^2_{\mathbb{H}_0^1}ds\right]\\
&\leq C\mathbb{E}\left[\int_0^{t\wedge\tau_N}\Big(\|u^n(s)\|_{\Ll}^2 + \|\z^n(s)\|_{L^2}^2+ \|w^0(s)\|_{\mathbb{L}^4}^4 \Big)ds\right]
\\&\quad+\mathbb{E}\left[\int_0^{t\wedge\tau_N}\|f(s)\|_{\Ll}^2 ds\right] +\mathbb{E}\left[\int_0^{t\wedge\tau_N}\|\s^n(s,u^n(s))\|_{L_Q(\mathbb{L}^2;\Ll)}^2 ds\right] \\
  &\quad +\mathbb{E}\left[\int_0^{t\wedge\tau_N}\int_Z\| H^n(u^n(s),z)\|^2_{\Ll} \la ds\right]
+\mathbb{E}[\|u^n_0\|_{\Ll}^2+\|\z^n_0\|_{L^2}^2]\\
&\leq C\mathbb{E}\left[\int_0^{t\wedge\tau_N}\Big(\|u^n(s)\|_{\Ll}^2 + \|\z^n(s)\|_{L^2}^2+\|w^0(s)\|_{\mathbb{L}^4}^4 \Big) ds\right]\\
&\quad+\mathbb{E}\left[\int_0^{t\wedge\tau_N}\|f(s)\|_{\Ll}^2 ds\right] 
   +K\mathbb{E}\left[\int_0^{t\wedge\tau_N} (1+ \|u^n(s)\|_{\Ll}^2) ds\right]\\
  &\quad+\mathbb{E}[\|u^n_0\|_{\Ll}^2+\|\z^n_0\|_{L^2}^2]\\
   &\leq \mathbb{E}\left[\|u_0\|_{\Ll}^2+\|\z_0\|_{L^2}^2+
  C\int_0^{t\wedge\tau_N}\Big(\|w^0(s)\|_{\mathbb{L}^4}^4+\|f(s)\|_{\Ll}^2 \Big) ds +K(t\wedge \tau_N)\right]\\
&\quad +C\mathbb{E}\left[\int_0^{t\wedge\tau_N} ( \|u^n(s)\|_{\Ll}^2 + \|\z^n(s)\|_{L^2}^2) ds\right].
\end{align*}
Now Gronwall's inequality yields
\begin{align*}
&\mathbb{E}\Big[\|u^n(t\wedge\tau_N)\|_{\Ll}^2+\|\z^n(t\wedge\tau_N)\|_{L^2}^2\Big]+\alpha\mathbb{E}\left[\int_0^{t\wedge\tau_N}\|u^n(s)\|^2_{\mathbb{H}_0^1}ds\right]\\
&\leq C\mathbb{E}\left[\|u_0\|_{\Ll}^2+\|\z_0\|_{L^2}^2+
  C\int_0^{t\wedge\tau_N}\Big(\|w^0(s)\|_{\mathbb{L}^4}^4+\|f(s)\|_{\Ll}^2 \Big) ds +K(t\wedge \tau_N)\right] e^{C(t\wedge \tau_N)}.
\end{align*}
Finally, taking limit as $N\rightarrow\infty$ we have
\begin{align*}
&\mathbb{E}\Big[\|u^n(t)\|_{\Ll}^2+\|\z^n(t)\|_{L^2}^2\Big]+\alpha\mathbb{E}\left[\int_0^{t}\|u^n(s)\|^2_{\mathbb{H}_0^1}ds\right]\\
&\leq C\mathbb{E}\left[\|u_0\|_{\Ll}^2+\|\z_0\|_{L^2}^2+
  C\int_0^{t}\Big(\|w^0(s)\|_{\mathbb{L}^4}^4+\|f(s)\|_{\Ll}^2 \Big) ds +Kt\right] e^{Ct}\\
  &\leq C\mathbb{E}\left[\|u_0\|_{\Ll}^2+\|\z_0\|_{L^2}^2+
  C\int_0^{T}\Big(\|w^0(s)\|_{\mathbb{L}^4}^4+\|f(s)\|_{\Ll}^2 \Big) ds +KT\right] e^{CT},
\end{align*}
which is the desired a-priori estimate \eqref{eq14}.

Proceeding similarly, by taking supremum of equation \eqref{eq11} over time and then taking expectation we achieve
\begin{align} \label{eq17}
&\mathbb{E}\Big[\sup_{0\leq t\leq T\wedge\tau_N} (\|u^n(t)\|_{\Ll}^2+\|\z^n(t)\|_{L^2}^2)\Big]+\alpha \mathbb{E}\left[\int_0^{T\wedge\tau_N}\|u^n(s)\|^2_{\mathbb{H}_0^1}ds\right]\notag\\
&\leq  C\mathbb{E}\left[\int_0^{T\wedge\tau_N}\Big(\|u^n(s)\|_{\Ll}^2 + \|\z^n(s)\|_{L^2}^2+\|w^0(s)\|_{\mathbb{L}^4}^4 \Big) ds\right]\notag\\
&\quad+\mathbb{E}\left[\int_0^{T\wedge\tau_N}\|f(s)\|_{\Ll}^2 ds\right] +K(T\wedge\tau_N)+K\mathbb{E}\left[\int_0^{T\wedge\tau_N}\| u^n(s)\|^2_{\Ll} ds\right]\notag\\
 &\quad+2\mathbb{E}\left[\sup_{0\leq t\leq T\wedge\tau_N}\Big|\int_0^{t}(\s^n(s,u^n(s))dW^n(s),u^n(s))_{\Ll}\Big|\right]+\mathbb{E}[\|u^n_0\|_{\Ll}^2+\|\z^n_0\|_{L^2}^2] \notag\\
 &\quad+2\mathbb{E}\left[\sup_{0\leq t\leq T\wedge\tau_N}\Big|\int_0^{t}\int_Z (H^n(u^n(s-),z),u^n(s-))_{\Ll}\ns \Big|\right].
\end{align}
 Using Burkholder-Davis-Gundy inequality, \eqref{eq.sig.n}, Assumption \ref{Hyp} and Young's inequality we have
\begin{IEEEeqnarray*}{lCl}
\label{eq18}
2\mathbb{E}\left[\sup_{0\leq t\leq T\wedge\tau_N}\Big|\int_0^{t}(\s^n(s,u^n(s))dW^n(s),u^n(s))_{\Ll}\Big|\right]\\
 \leq C_3\mathbb{E}\left[\left(\int_0^{T\wedge\tau_N}\|\s^n(s,u^n(s))\|_{L_Q(\Ll,\Ll)}^2\|u^n(s)\|_{\Ll}^2 ds\right)\right]^{1/2}\\
 \leq C_3\mathbb{E}\left[\left(\int_0^{T\wedge\tau_N}\|\s(s,u^n(s))\|_{L_Q(\Ll,\Ll)}^2\|u^n(s)\|_{\Ll}^2 ds\right)\right]^{1/2}\\
\leq C_3K\mathbb{E}\left[\left(\int_0^{T\wedge\tau_N}(1+\|u^n(s)\|_{\Ll}^2)\|u^n(s)\|_{\Ll}^2 ds\right)\right]^{1/2}\\
\leq C_3K\mathbb{E}\left[\sup_{0\leq s\leq T\wedge\tau_N}\|u^n(s)\|_{\Ll}^2\left(\int_0^{T\wedge\tau_N}(1+\|u^n(s)\|_{\Ll}^2) ds\right)\right]^{1/2} \\
\leq \dfrac{1}{4}\mathbb{E}\left[\sup_{0\leq s\leq T\wedge\tau_N}\|u^n(s)\|_{\Ll}^2\right]+(C_3K)^2\mathbb{E}\left[\int_0^{T\wedge\tau_N}\|u^n(s)\|_{\Ll}^2 ds\right]\\
\quad+(C_3K)^2(T\wedge\tau_N).\IEEEyesnumber
 \end{IEEEeqnarray*}

Similarly, using Burkholder-Davis-Gundy inequality [see Ichikawa \cite{Ichi}], \eqref{eq.sig.n1}, Assumption \ref{Hyp} and Young's inequality  we have
\begin{IEEEeqnarray*}{lCl}
\label{eq19}
2\mathbb{E}\left[\sup_{0\leq t\leq T\wedge\tau_N}\Big|\int_0^{t}\int_Z (H^n(u^n(s-),z),u^n(s-))_{\Ll}\ns\Big|\right]\\
\leq C_4 \mathbb{E}\left[\left(\int_0^{T\wedge\tau_N}\int_Z \Big| (H^n(u^n(s),z),u^n(s))_{\Ll}\Big|^2\la ds\right)\right]^{1/2}\\
\leq C_4 \mathbb{E}\left[\left(\int_0^{T\wedge\tau_N}\int_Z \|H^n(u^n(s),z)\|_{\Ll}^2\| u^n(s)\|_{\Ll}^2 \la ds\right)\right]^{1/2}\\
\leq C_4 \mathbb{E}\left[\left(\int_0^{T\wedge\tau_N}\int_Z \|H(u^n(s),z)\|_{\Ll}^2\| u^n(s)\|_{\Ll}^2 \la ds\right)\right]^{1/2}\\
\leq C_4K \mathbb{E}\left[\sup_{0\leq s\leq T\wedge\tau_N}\|u^n(s)\|_{\Ll}^2\left(\int_0^{T\wedge\tau_N}(1+\|u^n(s)\|_{\Ll}^2) ds\right)\right]^{1/2}\\
\leq \dfrac{1}{4}\mathbb{E}\left[\sup_{0\leq s\leq T\wedge\tau_N}\|u^n(s)\|_{\Ll}^2\right]+(C_4K)^2\mathbb{E}\left[\int_0^{T\wedge\tau_N}\|u^n(s)\|_{\Ll}^2 ds\right]\\
\quad+(C_4K)^2(T\wedge\tau_N).\IEEEyesnumber
\end{IEEEeqnarray*}
Substituting equations \eqref{eq18} and \eqref{eq19} in equation \eqref{eq17} and rearranging the terms we have
\begin{align*}
&\mathbb{E}\Big[\sup_{0\leq t\leq T\wedge\tau_N} (\|u^n(t)\|_{\Ll}^2+\|\z^n(t)\|_{L^2}^2)\Big]+2\alpha\mathbb{E}\left[\int_0^{T\wedge\tau_N}\|u^n(s)\|^2_{\mathbb{H}_0^1}ds\right]\\
&\leq  C^\prime\mathbb{E}\left[\int_0^{T\wedge\tau_N}\sup\limits_{0\leq s\leq t}(\|u^n(s)\|_{\Ll}^2 + \|\z^n(s)\|_{L^2}^2) dt\right]\\
&\quad+C\mathbb{E}\left[\int_0^{T\wedge\tau_N}\|w^0(s)\|_{\mathbb{L}^4}^4 ds\right]+2\mathbb{E}\left[\int_0^{T\wedge\tau_N}\|f(s)\|_{\Ll}^2 ds\right] \\ 
&\quad +C'' (T\wedge\tau_N)+2\mathbb{E}[\|u^n_0\|_{\Ll}^2+\|\z^n_0\|_{L^2}^2]\\
&\leq C'''\mathbb{E}\left[\|u_0\|_{\Ll}^2+\|\z_0\|_{L^2}^2+
\int_0^{T\wedge\tau_N}\left(\|w^0(s)\|_{\mathbb{L}^4}^4+\|f(s)\|_{\Ll}^2\right) ds\right] \\ 
&\quad  +C'' (T\wedge\tau_N) +C^\prime\mathbb{E}\left[\int_0^{T\wedge\tau_N}\sup\limits_{0\leq s\leq t}(\|u^n(s)\|_{\Ll}^2 + \|\z^n(s)\|_{L^2}^2) dt\right],
\end{align*}
where $C^\prime=2[C+(C_3K)^2+(C_4K)^2+K]$, $C''=2[(C_3K)^2+(C_4K)^2+K]$ and $C'''=\max\{C, 2\}$.
Now Gronwall's inequality yields
\begin{align*}
&\mathbb{E}\Big[\sup_{0\leq t\leq T\wedge\tau_N} (\|u^n(t)\|_{\Ll}^2+\|\z^n(t)\|_{L^2}^2)\Big]+2\alpha\mathbb{E}\left[\int_0^{T\wedge\tau_N}\|u^n(s)\|^2_{\mathbb{H}_0^1}ds\right]\\
&\leq C'''\mathbb{E}\Big[\|u_0\|_{\Ll}^2+\|\z_0\|_{L^2}^2 +\int_0^{T\wedge\tau_N}\left(\|w^0(s)\|_{\mathbb{L}^4}^4+\|f(s)\|_{\Ll}^2\right) ds+C'' (T\wedge\tau_N)\Big]
e^{C^\prime(T\wedge\tau_N)}.
\end{align*}
Taking limit as $N\rightarrow\infty$ we infer that
\begin{align*}
&\mathbb{E}\Big[\sup_{0\leq t\leq T} (\|u^n(t)\|_{\Ll}^2+\|\z^n(t)\|_{L^2}^2)\Big]+2\alpha\mathbb{E}\left[\int_0^{T}\|u^n(s)\|^2_{\mathbb{H}_0^1}ds\right]\\
&\leq C'''\mathbb{E}\Big[\|u_0\|_{\Ll}^2+\|\z_0\|_{L^2}^2
+\int_0^{T}\left(\|w^0(s)\|_{\mathbb{L}^4}^4+\|f(s)\|_{\Ll}^2\right) ds+C'' T\Big]
e^{C^\prime T}.
\end{align*}
which is the desired a-priori estimate \eqref{eq15}.

\end{proof}
\subsection{$p^{\textrm{th}}$ Moment Estimate}
Let $2 \leq p<\infty$. We assume that $\s$ and $H$ satisfy Assumption \ref{Hyp} provided in Subsection \ref{Hyp}. In addition, we assume
\begin{enumerate}
\item[H.4]  for all $t\in(0,T)$, there exists a positive constant $M$ such that for all $2 \leq p<\infty$ and $u\in\Ll(\mathcal{O})$
\begin{equation*}
\int_Z\|H(u,z)\|_{\Ll}^p\la\leq M(1+\|u\|_{\Ll}^p).
\end{equation*}
\end{enumerate}
Then we have the following result.
\begin{proposition}\label{moment}
Let $2\leq p<\infty$, $p^*=2p$ and 
\begin{equation}
\left\{\begin{array}{l}
w^0\in L^{p^*}(\Omega; L^{p^*}(0,T;\mathbb{L}^{p^*}(\mathcal{O}))),\;f\in L^p(\Omega;L^p(0,T;\Ll(\mathcal{O}))),\\
\s\in C([0,T]\times\h(\mathcal{O});L_Q(\mathbb{L}^2,\Ll)),\;H\in\mathbb{H}^p_\lambda([0,T]\times Z;\Ll(\mathcal{O})),\\
u_0\in L^p(\Omega;\Ll(\mathcal{O})),\z_0\in L^p(\Omega;L^2(\mathcal{O})).
\end{array}\right.
\end{equation}
If $(u^n(t),\z^n(t))$ denotes the unique strong solution of the system \eqref{eqn5}-\eqref{eqn6}, then we have the following a-priori estimate:
\begin{IEEEeqnarray}{l}
\label{eq16}
\mathbb{E}\left[\sup\limits_{0\leq t\leq T} \Big(\|u^n(t)\|_{\Ll}^p+\|\z^n(t)\|_{L^2}^p \Big)\right]+\alpha p\mathbb{E}[\int_0^T \|u^n(t)\|_{\Ll}^{p-2}\|u^n(t)\|_{\h}^2  dt]\leq C_{1(p)},\nonumber\\
\end{IEEEeqnarray}
where the constant $C_{1(p)}$ depends on the coefficients $\alpha, g,M,\mu$ and the norms $\|f\|_{L^p(\Omega;L^p(0,T;\Ll))},$ $\|w^0\|_{L^{p^*}(\Omega; L^{p^*}(0,T;\mathbb{L}^{p^*}))}, \|u_0\|_{ L^p(\Omega;\Ll)}, \|\z_0\|_{ L^p(\Omega;L^2)}$ and $T$.
\end{proposition}
The Proposition can be proved using the same ideas exploited in Proposition \ref{prop}. However, we provide certain steps  similar to \eqref{B.W0.2} which clarifies regularity of $w^0 \in L^{p^*}(\Omega; L^{p^*}(0,T;\mathbb{L}^{p^*}(\mathcal{O}))) .$ \\
Using the definition of the operator $B(\cdot)$ and Lemma \ref{mon} we have
\begin{IEEEeqnarray*}{rCl} \label{B.W0.p}
(B(u^n(t)),u^n(t))_{\Ll} |u^n(t)|^{p-2}  &\geq -&\int_\mathcal{O}\gamma(x)  |w^0(t)|^2 \,|u^n(t)|^{p-1}dx \\
&\geq & -\dfrac{r}{\epsilon}\Big(\int_{\mathcal{O}} |w^0(t)|^{2p} dx\Big)^{\frac{1}{p}}\Big(\int_{\mathcal{O}}|u^n(t)|^{p}dx\Big)^{\frac{p-1}{p}}\\
 &\geq & -\dfrac{r}{\epsilon} \Big[\|u^n(t)\|^{p-1}_{\mathbb{L}^p}\, \|w^0(t)\|^{2}_{\mathbb{L}^{2p}} \Big]\\
 &\geq & -\dfrac{r}{\epsilon} \Big[ \dfrac{p-1}{p} \|u^n(t)\|^{p}_{\mathbb{L}^p}
 + \dfrac{1}{p} \|w^0(t)\|^{2p}_{\mathbb{L}^{2p}} \Big]
\end{IEEEeqnarray*}
\subsection{Existence Result}
\begin{definition}
A path-wise strong solution $(u, \z)$ is defined on a given filtered probability space $(\Omega,\mathcal{F},\mathcal{F}_t,P)$ as a $\Big(L^\infty(0,T;\mathbb{L}^2(\mathcal{O})) \cap L^2(0,T;\h(\mathcal{O}))\Big) \times C([0,T];L^2(\mathcal{O}))$ valued function which satisfy the stochastic tide equations \eqref{me1}-\eqref{me3} in the weak sense and also the energy inequalities in Proposition \ref{prop}.
\end{definition}
\emph{Proof of Theorem \ref{thmint1} .}
\textbf{Step I : Weak convergent subsequences}

Recall that the Galerkin approximations $(u^n, \z^n)$ satisfy the stochastic differential equations 
\begin{align}\label{SDE}
&du^n(t)+F(u^n(t))dt+g\nabla\z^n(t) dt
=\s^n(t,u^n(t))dW^n(t)\nonumber\\
&\qquad\qquad\qquad\qquad\qquad\qquad\qquad\quad+\int_Z H^n(u^n(t-),z) \n,\\
&d\z^n(t)+Div(hu^n(t)) dt =0,
\end{align}
where $F(u^n)=Au^n+B(u^n)-f$.
Then using the a-priori estimates \eqref{eq14}-\eqref{eq15}, it follows from the Banach-Alaoglu theorem that along a subsequence of $(u^n,\z^n)$, still denoted by $(u^n,\z^n)$, we obtain the following limits:
\begin{IEEEeqnarray}{rCl}
&&u^n\rightarrow u\; \text{weakly star in }L^2(\Omega ;L^\infty(0,T,\mathbb{L}^2(\mathcal{O}))\cap L^2(0,T;\h(\mathcal{O}))),\label{w.u} \\
&&u^n(T) \rightarrow \eta\; \text{weakly in }L^2(\Omega ;\mathbb{L}^2(\mathcal{O})),\label{w.unT} \\
&&\z^n\rightarrow\z\;\text{weakly in }L^2(\Omega ;L^2(0,T;L^2(\mathcal{O}))),\label{w.z} \\
&&F(u^n)\rightarrow F_0\;\text{weakly in }L^2(\Omega ;L^2(0,T;\mathbb{H}^{-1}(\mathcal{O}))), \label{w.F}\\
&&\s^n(\cdot ,u^n)\rightarrow\s_0\;\text{weakly in }L^2(\Omega ;L^2(0,T; L_Q(\Ll,\Ll))), \label{w.sig}\\
&&H^n(u^n,\cdot)\rightarrow H_0\;\text{weakly in }\mathbb{H}_\lambda^2 ([0,T]\times Z;\Ll). \label{w.H}
\end{IEEEeqnarray}
Note that boundedness of $F(u^n)$ follows from the bounds of the operators $A$ and $B$, and convergences of $\sigma^n$ and $H^n$  follow from their linear growth property and the uniform bound of $u^n$ (in $n$) in  $L^2(\Omega; L^2(0,T;\h(\mathcal{O})))$.  

\begin{claim} 
$u$ satisfies the stochastic differential equation
\begin{equation}\label{SDE1}
du(t)+F_0(t)dt+g\nabla\z(t) dt
=\s_0(t)dW(t)
+\int_Z H_0(t,z) \n ,
\end{equation}
weakly in $L^2(\Omega;L^2(0,T;\mathbb{H}^{-1}(\mathcal{O}))).$ 
\end{claim}
\begin{pf}
\end{pf}
Our proof is based on Theorem 4.4 of Brze\'zniak \emph{et al.} \cite{BHZ} and Theorem 3.2 of Suvinthra \emph{et al.} \cite{SSB}. Let us consider the function $\psi(t)\in \ada{H}^1(-\delta,T+\delta)$ with $\psi(0)=1.$ Let $\{e_j\}_{j \in \mathbb{N}}$ be an orthonormal basis in $\h(\mathcal{O}).$ We define $e_j(t)=\psi(t)e_j.$ Applying It\^o formula to the function $(u^n(t),e_j(t))_{\Ll}$ we have
\begin{align} \label{conv.sig.N}
&(u^n(T),e_j(T))_{\Ll}-\int_0^T(u^n(s),\dfrac{d e_j(s)}{ds})_{\Ll}ds+\int_0^T \langle F(u^n(s))+g\nabla\z^n(s),e_j(s)\rangle ds\notag\\
&=(u^n(0),e_j)_{\Ll}+\int_0^T(\s^n(s,u^n(s))dW^n(s),e_j(s))_{\Ll}+\int_0^T \int_Z( H^n(u^n(s-),z),e_j(s))_{\Ll}\N.
\end{align}
 We now fix the integer $j$. Let $\mathcal{P}^1_T$ denote the class of all predictable processes with values in $L^2(\Omega\times [0,T],L_Q(\Ll,\Ll))$ with the inner product defined by $(\xi, \eta)_{\mathcal{P}^1_T} = \E\left[\mathlarger{\int_0^T} \Tr(\xi(t)Q\eta^\star(t)) dt\right]$ for all $\xi, \eta\in\mathcal{P}^1_T$.
 Define $J_1:\mathcal{P}^1_T \rightarrow L^2(\Omega\times [0,T])$ by $J_1(G_1)=\mathlarger{\int_0^t}(G_1(s)dW(s),e_j(s))_{\Ll}.$ Then $J_1$ is linear and continuous. Therefore, in view of weak convergence [as in Chow \cite{Ch}, Chapter 6.7, Proof of Theorem 7.5]  of $\s^n(\cdot,u^n(\cdot))$ to $\sigma_0(\cdot)$, we have $$J_1(\s^n(t,u^n(t))) =\mathlarger{\int_0^T}(\s^n(s,u^n(s)) dW^n(s),e_j(s))_{\Ll} \rightarrow \mathlarger{\int_0^t}(\s_0(s)dW(s),e_j(s))_{\Ll} \quad\mbox{as}\quad n \rightarrow \infty.$$
Again, let $\mathcal{P}^2_T$ be the class of all predictable process with values in $L^2(\Omega\times [0,T],\mathbb{H}^2_\lambda([0,T]\times Z;\Ll(\mathcal{O})))$ with the inner product defined by $(\zeta, \gamma)_{\mathcal{P}^2_T} = \E\left[\mathlarger{\int_0^T\int_Z} (\zeta(t), \gamma(t))_{\Ll} \lambda(dz) dt\right]$. Define $J_2:\mathcal{P}^2_T \rightarrow L^2(\Omega\times [0,T])$ by $J_2(G_2)=\mathlarger{\int_0^t\int_Z}(G_2(s,z),e_j(s))_{\Ll}\N.$ Then $J_2$ is linear and continuous (infact an isometry), hence it is continuous with respect to the weak topologies, [see Theorem 4.4 of Brze\'zniak \emph{et al.} \cite{BHZ} and references therein].
 Again in view of weak convergence of $H^n(u^n(\cdot),\cdot)$ to $H_0$ we have,
 $$J_2(H^n(u^n(t), z))=\mathlarger{\int_0^t\int_Z}(H^n(u^n(s-), z),e_j(s))_{\Ll}\N\rightarrow \mathlarger{\int_0^t}\mathlarger{\int_Z}( H_0(s,z),e_j(s))_{\Ll}\N,$$ as $n \rightarrow \infty.$ 
 Passing to the limit in \eqref{conv.sig.N} using weak convergence we have
 \begin{align*}
 &(\eta,e_j)_{\Ll}\psi(T)-\int_0^T(u(s),\dfrac{d \psi(s)}{ds}e_j)_{\Ll}ds+\int_0^T\langle F_0(s)+g\nabla\z(s),e_j \rangle ds\notag\\
&=(u(0),e_j)_{\Ll}+\int_0^T(\s_0(s)dW(s),\psi(s)e_j)_{\Ll}+\int_0^T \int_Z( H_0(s,z),z),\psi(s)e_j)_{\Ll}\N.
 \end{align*}
 We now choose a subsequence of functions $\{\psi_k\} \subset \ada{H}^1(-\delta,T+\delta) $ such that $\psi_k(0)=1,\,k \in \mathbb{N},$ and letting $k \rightarrow \infty, \,\, \psi_k(s)$ converges to the Heaviside function $H(t-s)$ which is one for $s \leq t$ and zero otherwise.
 Replacing $\psi_k$ instead of $\psi$ as $k \rightarrow \infty$ we have,
  \begin{align*}
 &(u(t),e_j)_\gr{{\Ll}}
 +\int_0^t\langle F_0(s)+g\nabla\z(s),e_j \rangle ds\notag\\&\quad
=(u(0),e_j)_\gr{{\Ll}}+\int_0^t(\s_0(s)dW(s),e_j)_\gr{{\Ll}} +\int_0^t \int_Z( H_0(s,z),z),e_j)_\gr{{\Ll}}\N
 \end{align*}
 with $(u(T),e_j)_\gr{{\Ll}}=(\eta,e_j)_\gr{{\Ll}}\,\,;j=1,\ldots.$
 This gives
 \begin{equation*}
u(t)+\int_0^t(F_0(s)+g\nabla\z(s)) ds
=u(0)+\int_0^t\s_0(s)dW(s)
+\int_0^t\int_Z H_0(s,z) \N ,
\end{equation*} with $u(T)=\eta.$
 This proves the claim.
Hence $u$ satisfies \eqref{SDE1} weakly in $L^2(\Omega;L^2(0,T;\mathbb{H}^{-1}(\mathcal{O})))$ and $\z$ satisfies
 \begin{equation}
d\z(t)+Div(hu(t)) dt =0
\end{equation} 
weakly in $L^2(\Omega;L^2(0,T;L^2(\mathcal{O}))).$ 
 

\noindent
\textbf{Step II : Energy inequality of weak limits }

       Let us take $\tilde{\Lambda}:=2 \Lambda_1 +L$ where $\Lambda_1$ is given in Remark \ref{monotone2}. Let us apply It\^o's lemma to \ada{the process $e^{-\tilde{\Lambda} t}\|\sqrt{h}u(t)\|_{\Ll}^2$} to get
\begin{align}\label{itou}
&e^{-\tilde{\Lambda} T}\|\sqrt{h}u(T)\|_{\Ll}^2 - \|\sqrt{h}u(0)\|_{\Ll}^2 + e^{-\tilde{\Lambda} T}g\|\z(t)\|_{L^2}^2 -g\|\z(0)\|_{L^2}^2 \nonumber\\
&=-\tilde{\Lambda}\int_0^T e^{-\tilde{\Lambda} t}g\|\z(t)\|_{\Ll}^2 dt-\tilde{\Lambda}\int_0^T e^{-\tilde{\Lambda} t}\|\sqrt{h}u(t)\|_{\Ll}^2 dt\nonumber\\
&\quad -\int_0^Te^{-\tilde{\Lambda} t}\langle 2F_0(t),hu(t) \rangle dt+2\int_0^Te^{-\tilde{\Lambda} t}(\s_0(t)dW(t),hu(t))_{\Ll}\nonumber\\
&\quad+\int_0^Te^{-\tilde{\Lambda} t}\|\sqrt{h}\s_0(t)\|_{L_Q(\mathbb{L}^2,\Ll)}^2dt
+2\int_0^T e^{-\tilde{\Lambda} t}\int_Z (H_0(t,z),hu(t-))_{\Ll}\n \nonumber \\
&\quad+\int_0^Te^{-\tilde{\Lambda} t}\int_Z\|\sqrt{h} H_0(t,z)\|^2_{\Ll} N(dt,dz).
\end{align}
Similarly applying It\^o's lemma to the \ada{process $e^{-\tilde{\Lambda} t}\|\sqrt{h}u^n(t)\|_{\Ll}^2$} and then taking expectation we obtain
\begin{align*}
&\mathbb{E}[e^{-\tilde{\Lambda} T}\|\sqrt{h}u^n(T)\|_{\Ll}^2 + e^{-\tilde{\Lambda} T}g\|\z^n(T)\|_{L^2}^2 - \|\sqrt{h}u^n(0)\|_{\Ll}^2 - g\|\z^n(0)\|_{L^2}^2]\\
&=-\mathbb{E}[\int_0^T\tilde{\Lambda} e^{-\tilde{\Lambda} t}g\|\z^n(t)\|_{\Ll}^2 dt]-\mathbb{E}[\int_0^T \tilde{\Lambda} e^{-\tilde{\Lambda} t}\|\sqrt{h}u^n(t)\|_{\Ll}^2 dt]\\ 
&\quad-2\mathbb{E}[\int_0^T e^{-\tilde{\Lambda} t}\langle \gr{F}(u^n(t)),hu^n(t)\rangle dt]
+\mathbb{E}[\int_0^T e^{-\tilde{\Lambda} t}\|\sqrt{h}\s^n(t,u^n(t))\|_{L_{Q}(\mathbb{L}^2,\Ll)}^2dt]\\
&\quad+\mathbb{E}[\int_0^T 2e^{-\tilde{\Lambda} t}(\s^n(t,u^n(t))dW^n(t),hu^n(t))_{\Ll}]\\
&\quad+\mathbb{E}[\int_0^T e^{-\tilde{\Lambda} t}\int_Z\|\sqrt{h} H^n(u^n(t-),z)\|^2_{\Ll} N(dt,dz)]\\
&\quad+\mathbb{E}[\int_0^T 2e^{-\tilde{\Lambda} t}\int_Z (H^n(u^n(t-),z),hu^n(t-))_{\Ll}\n].
\end{align*}
Using the facts that
\begin{equation*}
\int_0^T 2e^{-\tilde{\Lambda} t}(\s^n(t,u^n(t))dW^n(t),hu^n(t))_{\Ll},
\end{equation*}
and
\begin{equation*}
\int_0^T 2e^{-\tilde{\Lambda} t}\int_Z (H^n(u^n(t-),z),hu^n(t-))_{\Ll}\n,
\end{equation*}
are local martingales with zero averages, and the expectation of quadratic variation process and that of Meyer process are same, i.e.,
\begin{align*}
&\mathbb{E}[\int_0^T e^{-\tilde{\Lambda} t}\int_Z\|\sqrt{h} H^n(u^n(t-),z)\|^2_{\Ll} N(dt,dz)]\\
&=\mathbb{E}[\int_0^T e^{-\tilde{\Lambda} t}\int_Z\|\sqrt{h} H^n(u^n(t),z)\|^2_{\Ll} \la dt],
\end{align*}
we have
\begin{IEEEeqnarray*}{lCl}
\mathbb{E}[e^{-\tilde{\Lambda} T}\|\sqrt{h}u^n(T)\|_{\Ll}^2 + e^{-\tilde{\Lambda} T}g\|\z^n(T)\|_{L^2}^2 - \|\sqrt{h}u^n(0)\|_{\Ll}^2 - g\|\z^n(0)\|_{L^2}^2]\\
=-\mathbb{E}[\int_0^T\tilde{\Lambda} e^{-\tilde{\Lambda} t}g\|\z^n(t)\|_{\Ll}^2 dt]-\mathbb{E}[\int_0^T \tilde{\Lambda} e^{-\tilde{\Lambda} t}\|\sqrt{h}u^n(t)\|_{\Ll}^2 dt] \\
\quad-2\mathbb{E}[\int_0^T e^{-\tilde{\Lambda} t} \langle \gr{F}(u^n(t)),hu^n(t) \rangle dt]
+\mathbb{E}[\int_0^T e^{-\tilde{\Lambda} t}\|\sqrt{h}\s^n(t,u^n(t))\|_{{L_{Q}(\mathbb{L}^2,\Ll)}}^2dt] \\
\quad+\mathbb{E}[\int_0^T e^{-\tilde{\Lambda} t}\int_Z\|\sqrt{h} H^n(u^n(t),z)\|^2_{\Ll} \la dt].
\end{IEEEeqnarray*}
Using the lower semi-continuity property of the $\Ll$-norm, strong convergence of the initial data and applying It\^o's lemma to \ada{the process $e^{-\tilde{\Lambda} t}\|\sqrt{h}u(t)\|_{\Ll}^2$} (see \eqref{itou}), we infer that
\begin{IEEEeqnarray*}{lCl}
\label{eq25}
\liminf_n \Big\{-\mathbb{E}[\int_0^T \tilde{\Lambda} e^{-\tilde{\Lambda} t}\|\sqrt{h}u^n(t)\|_{\Ll}^2 dt] 
-2\mathbb{E}[\int_0^T e^{-\tilde{\Lambda} t}\langle \gr{F}(u^n(t)),hu^n(t)\rangle dt]\\
\quad+\mathbb{E}[\int_0^T e^{-\tilde{\Lambda} t}\|\sqrt{h}\s^n(t,u^n(t))\|_{L_{Q}(\mathbb{L}^2,\Ll)}^2dt] \\
\quad+\mathbb{E}[\int_0^T e^{-\tilde{\Lambda} t}\int_Z\|\sqrt{h} H^n(u^n(t),z)\|^2_{\Ll} \la dt]\Big\}\\
=\liminf_n \Big\{\mathbb{E}[e^{-\tilde{\Lambda} T}\|\sqrt{h}u^n(T)\|_{\Ll}^2 + e^{-\tilde{\Lambda} T}g\|\z^n(T)\|_{L^2}^2 - \|\sqrt{h}u^n(0)\|_{\Ll}^2 \\ \quad- g\|\z^n(0)\|_{L^2}^2] +\mathbb{E}[\int_0^T\tilde{\Lambda} e^{-\tilde{\Lambda} t}g\|\z^n(t)\|_{\Ll}^2 dt]\gr{\Big\}}\\
\geq \mathbb{E}[e^{-\tilde{\Lambda} T}\|\sqrt{h}u(T)\|_{\Ll}^2 + e^{-\tilde{\Lambda} T}g\|\z(T)\|_{L^2}^2 - \|\sqrt{h}u(0)\|_{\Ll}^2 - g\|\z(0)\|_{L^2}^2]\\ \quad+\mathbb{E}[\int_0^T\tilde{\Lambda} e^{-\tilde{\Lambda} t}g\|\z(t)\|_{\Ll}^2 dt]\\
=-\mathbb{E}[\int_0^T \tilde{\Lambda} e^{-\tilde{\Lambda} t}\|\sqrt{h}u(t)\|_{\Ll}^2 dt]-2\mathbb{E}[\int_0^T e^{-\tilde{\Lambda} t}\langle F_0(t),hu(t)\rangle dt]\\
\quad+\mathbb{E}[\int_0^T e^{-\tilde{\Lambda} t}\|\sqrt{h}\s_0(t)\|_{L_{Q}(\mathbb{L}^2,\Ll)}^2dt] 
+\mathbb{E}[\int_0^T e^{-\tilde{\Lambda} t}\int_Z\|\sqrt{h} H_0(t,z)\|^2_{\Ll} \la dt].\\\IEEEyesnumber
\end{IEEEeqnarray*}
\textbf{Step III : Consequences of weak convergence}
\gr{\begin{claim} For any $v \in L^2(\Omega ;L^{\infty}(0,T;\mathbb{L}^2(\mathcal{O}))\cap L^2(0,T;\h(\mathcal{O}))),$ using the weak convergences \eqref{w.u}-\eqref{w.H}, we have the following:
\begin{align}
(i)\,&\,\mathbb{E}[\int_0^T e^{-\tilde{\Lambda} t}\langle F(v(t)),hu^n(t)-hv(t)\rangle dt] \notag \\
&\rightarrow \,\mathbb{E}[\int_0^T e^{-\tilde{\Lambda} t}\langle F(v(t)),hu(t)-hv(t)\rangle dt] \quad \mbox{as} \quad n \rightarrow \infty.\label{weak.F}\\
(ii)\,&\,\mathbb{E}[\int_0^T e^{-\tilde{\Lambda} t}(\s^n(t,u^n(t)),\s^n(t,v(t)))_{L_{Q}(\mathbb{L}^2,\Ll)}dt] \notag\\
 &\rightarrow \mathbb{E}[\int_0^T e^{-\tilde{\Lambda} t}(\s_0(t),\s(t,v(t)))_{L_{Q}(\mathbb{L}^2,\Ll)}dt] \quad \mbox{as} \quad n \rightarrow \infty.\label{weak.sig}\\
(iii)\, &\,\mathbb{E}[\int_0^T e^{-\tilde{\Lambda} t}\int_Z (H^n(u^n(t),z),H^n(v(t),z))_{\Ll} \la dt] \notag \\
& \rightarrow \mathbb{E}[\int_0^T e^{-\tilde{\Lambda} t}\int_Z\ (H_0(t,z),H(v(t),z))_{\Ll} \la dt] \quad \mbox{as} \quad n \rightarrow \infty.\label{weak.H}\\
(iv)\, &\,\mathbb{E}[\int_0^T  e^{-\tilde{\Lambda} t}(u^n(t),v(t))_{\Ll} dt] 
 \rightarrow \mathbb{E}[\int_0^T  e^{-\tilde{\Lambda} t}(u(t),v(t))_{\Ll} dt] \notag\\
 &\quad \mbox{as} \quad n \rightarrow \infty.\label{weak.u}\\
(v)\,&\,\mathbb{E}[\int_0^T e^{-\tilde{\Lambda} t}\|\s^n(t,v(t))\|_{L_{Q}(\mathbb{L}^2,\Ll)}^2dt] 
\rightarrow \mathbb{E}[\int_0^T e^{-\tilde{\Lambda} t}\|\s(t,v(t))\|_{L_{Q}(\mathbb{L}^2,\Ll)}^2dt]\notag\\ &\quad \mbox{as} \quad n \rightarrow \infty.\label{weak.nor.sig}\\(vi)
\,&\,\mathbb{E}[\int_0^T e^{-\tilde{\Lambda} t}\int_Z\|H^n(v(t),z)\|^2_{\Ll} \la dt] \notag \\ &\rightarrow \mathbb{E}[\int_0^T e^{-\tilde{\Lambda} t}\int_Z\|H(v(t),z)\|^2_{\Ll} \la dt] \quad \mbox{as} \quad n \rightarrow \infty. \label{weak.norm.H}
\end{align}
\end{claim}}

\begin{pf}
\end{pf}
\begin{itemize}
\item[{\textit{(i)}}]
Note, as $v \in L^2(\Omega ;L^{\infty}(0,T;\mathbb{L}^2(\mathcal{O}))\cap L^2(0,T;\h(\mathcal{O}))),$  $F(v(t))\in  L^2(\Omega; L^2(0,T; \mathbb{H}^{-1}(\mathcal{O}))).$ 
Since $u^n\rightarrow u$ weakly in $L^2(\Omega ; L^2(0,T;\h(\mathcal{O})))$, we infer that
\begin{align*} 
&|\mathbb{E}[\int_0^T e^{-\tilde{\Lambda} t}\Big(\langle F(v(t)),hu^n(t)\rangle  -\langle F(v(t)),hu(t)\rangle \Big) dt]|\\
&\leq  \mathbb{E}[\int_0^T e^{-\tilde{\Lambda} t}|\langle F(v(t)),hu^n(t)-hu(t) \rangle| dt]\rightarrow 0 \quad \mbox{as}\quad n \rightarrow \infty.
\end{align*}
This proves {\textit{(i)}}.
\item[{\textit{(ii)}}] 
To prove \eqref{weak.sig}, we first note
\begin{align} \label{w.ii}
&|\mathbb{E}[\int_0^T e^{-\tilde{\Lambda} t}(\s^n(t,u^n(t)),\s^n(t,v(t)))_{L_{Q}(\mathbb{L}^2,\Ll)}dt]\notag\\&\quad-\mathbb{E}[\int_0^T e^{-\tilde{\Lambda} t}(\s_0(t),\s(t,v(t)))_{L_{Q}(\mathbb{L}^2,\Ll)}dt]|\nonumber\\
&=|\mathbb{E}[\int_0^T e^{-\tilde{\Lambda} t}(\s^n(t,u^n(t))-\s_0(t),\s(t,v(t)))_{L_{Q}(\mathbb{L}^2,\Ll)}dt]\nonumber\\&\quad +\mathbb{E}[\int_0^T e^{-\tilde{\Lambda} t}(\s^n(t,u^n(t)),\s^n(t,v(t))-\s(t,v(t)))_{L_{Q}(\mathbb{L}^2,\Ll)}dt]|\nonumber\\
& \leq |\mathbb{E}[\int_0^T e^{-\tilde{\Lambda} t}(\s^n(t,u^n(t))-\s_0(t),\s(t,v(t)))_{L_{Q}(\mathbb{L}^2,\Ll)}dt]| \nonumber\\&\quad +\mathbb{E}[\int_0^T e^{-\tilde{\Lambda} t}\|(\s^n(t,u^n(t))\|_{L_{Q}(\mathbb{L}^2,\Ll)}\|\s^n(t,v(t))-\s(t,v(t))\|_{L_{Q}(\mathbb{L}^2,\Ll)}dt]\nonumber\\
& \leq |\mathbb{E}[\int_0^T e^{-\tilde{\Lambda} t}(\s^n(t,u^n(t))-\s_0(t),\s(t,v(t)))_{L_{Q}(\mathbb{L}^2,\Ll)}dt]|\nonumber \\&\quad +\Big(\mathbb{E}[\int_0^T e^{-2\tilde{\Lambda} t}\|(\s^n(t,u^n(t))\|^2_{L_{Q}(\mathbb{L}^2,\Ll)} dt] \Big)^{\frac{1}{2}}\notag\\& \quad\quad \times \Big(\mathbb{E}[\int_0^T \|\s^n(t,v(t))-\s(t,v(t))\|^2_{L_{Q}(\mathbb{L}^2,\Ll)}dt]\Big)^{\frac{1}{2}}\nonumber\\
& \leq |\mathbb{E}[\int_0^T e^{-\tilde{\Lambda} t}(\s^n(t,u^n(t))-\s_0(t),\s(t,v(t)))_{L_{Q}(\mathbb{L}^2,\Ll)}dt]| \nonumber\\&\quad 
+c\Big(\mathbb{E}[\int_0^T \|\s^n(t,v(t))-\s(t,v(t))\|^2_{L_{Q}(\mathbb{L}^2,\Ll)}dt]\Big)^{\frac{1}{2}},
\end{align}
where by using Assumption \ref{hypo.noi}, and Proposition \ref{prop} we observe that
\begin{align*}
c:&=\sup_{n \in \mathbb{N}}\Big(\mathbb{E}[\int_0^T e^{-2\tilde{\Lambda} t}\|(\s^n(t,u^n(t))\|^2_{L_{Q}(\mathbb{L}^2,\Ll)} dt] \Big)\\& 
\leq \sup_{n \in \mathbb{N}}\Big(\mathbb{E}[\int_0^T e^{-2\tilde{\Lambda} t}\|(\s(t,u^n(t))\|^2_{L_{Q}(\mathbb{L}^2,\Ll)} dt] \Big)
\\& \leq K \sup_{n \in \mathbb{N}}\Big(\mathbb{E}[\int_0^T e^{-2\tilde{\Lambda} t}(1+\|u^n(t))\|^2_{\Ll}) dt] \Big) \\ & \leq C_T + C_T\sup_{n \in \mathbb{N}}\mathbb{E}\Big(\sup_{t \in [0,T]}\|u^n(t))\|^2_{\Ll} \Big)<\infty.
\end{align*}
Thus using \eqref{w.sig}, the first term on the right hand side of \eqref{w.ii} tends to zero as $n\to\infty$. Since $P_n \s(\cdot,v(\cdot))= \s^n(\cdot,v(\cdot)),$ we now prove $\s^n(\cdot,v(\cdot)) \rightarrow \s(\cdot,v(\cdot))$ strongly in $L^2(\Omega ;L^2(0,T;L_Q(\mathbb{L}^2,\Ll))).$ \\
Let us fix $t \in [0,T]$ and $\omega \in \Omega$ and define the double sequence $a_{n,k}:=\|(\s^n(t,v(t))-\s(t,v(t)))Q^{\frac{1}{2}}\psi_k\|^2_{\Ll}$, where $\{\psi_k\}_{k \geq 1}$ is an orthonormal basis in $\mathbb{L}^2.$ Define $b_k:=4\|\s(t,v(t)) Q^{\frac{1}{2}}\psi_k\|^2_{\Ll}$. Note, by Minkowski inequality and \eqref{eq.sig.n}, $a_{n,k} \leq b_k$ for every $n$ and $k$. Moreover, $\sum_{k=1}^{\infty}b_k=4\sum_{k=1}^{\infty}\|\s(t,v(t)) Q^{\frac{1}{2}}\psi_k\|^2_{\Ll}=4 \|\s(t,v(t))\|^2_{L_{Q}(\mathbb{L}^2,\Ll)}< \infty.$ Now using Lemma \ref{Pn.lem} for every $k$, we achieve 
\begin{align*}
&\lim_{n \rightarrow \infty} a_n(k)=\lim_{n \rightarrow \infty} \|(\s^n(t,v(t))-\s(t,v(t)))Q^{\frac{1}{2}}\psi_k\|^2_{\Ll}\\&=\lim_{n \rightarrow \infty} \|\s^n(t,v(t)) Q^{\frac{1}{2}}\psi_k-\s(t,v(t)) Q^{\frac{1}{2}}\psi_k\|^2_{\Ll}=0.
\end{align*}
 Hence by Lebesgue Dominated Convergence Theorem (for double sequence), $$\lim_{n \rightarrow \infty}\sum_{k=1}^{\infty} a_{n,k}=0.$$ Thus for almost all $t \in [0,T],\,\,P-$a.e. $\omega \in \Omega$ we obtain
\begin{align} \label{conv.sig.Q}
\lim_{n \rightarrow \infty}\|\s^n(t,v(t))-\s(t,v(t))\|^2_{L_{Q}(\mathbb{L}^2,\Ll)}=0.
\end{align}  
Furthermore, $ \|\s^n(t,v(t))-\s(t,v(t))\|^2_{L_{Q}(\mathbb{L}^2,\Ll)} \leq 4 \|\s(t,v(t))\|^2_{L_{Q}(\mathbb{L}^2,\Ll)}$ for almost all $t \in [0,T],\,\,P-$a.e. $\omega \in \Omega$. 
Thus applying Lebesgue Dominated Convergence Theorem again we achieve,
\begin{align} \label{fin.conv.si}
\lim_{n\to\infty}\mathbb{E} \Big[ \int_0^T \|\s^n(t,v(t))-\s(t,v(t))\|^2_{L_{Q}(\mathbb{L}^2,\Ll)} dt\Big] = 0,
\end{align}
 proving that $\s^n(\cdot,v(\cdot)) \rightarrow \s(\cdot,v(\cdot))$ strongly in $L^2(\Omega ;L^2(0,T;L_Q(\mathbb{L}^2,\Ll))).$ 
Hence, second term on the right hand side of \eqref{w.ii} goes to zero as $n\to\infty$. Thus we have \eqref{weak.sig}.

\item[{\textit{(iii)}}]
Let us consider
\begin{align} \label{w.iv}
&|\mathbb{E}[\int_0^T e^{-\tilde{\Lambda} t}\int_Z (H^n(u^n(t),z),H^n(v(t),z))_{\Ll} \la dt]\notag\\&\quad-\mathbb{E}[\int_0^T e^{-\tilde{\Lambda} t}\int_Z\ (H_0(t,z),H(v(t),z))_{\Ll} \la dt]|\nonumber\\
&=|\mathbb{E}[\int_0^T e^{-\tilde{\Lambda} t} \int_Z (H^n(u^n(t),z)-H_0(t,z),H(v(t),z))_{\Ll} \la dt]\nonumber\\&\quad +\mathbb{E}[\int_0^T e^{-\tilde{\Lambda} t} \int_Z (H^n(u^n(t),z),H^n(v(t),z)-H(v(t),z))_{\Ll} \la dt]|\nonumber\\
& \leq |\mathbb{E}[\int_0^T e^{-\tilde{\Lambda} t} \int_Z (H^n(u^n(t),z)-H_0(t,z),H(v(t),z))_{\Ll} \la dt]| \nonumber\\&\quad +\mathbb{E}[\int_0^T e^{-\tilde{\Lambda} t} \int_Z \|(H^n(u^n(t),z)\|_{\Ll}\|H^n(v(t),z)-H(v(t),z)\|_{\Ll} \la dt]\nonumber\\
& \leq |\mathbb{E}[\int_0^T e^{-\tilde{\Lambda} t} \int_Z (H^n(u^n(t),z)-H_0(t,z),H(t,v(t)))_{\Ll} \la dt]|\nonumber \\&\quad +\Big(\mathbb{E}[\int_0^T e^{-2\tilde{\Lambda} t} \int_Z \|(H^n(t,u^n(t))\|^2_{\Ll} \la dt] \Big)^{\frac{1}{2}} \notag\\
&\quad \quad \times \Big(\mathbb{E}[\int_0^T \int_Z \|H^n(v(t),z)-H(v(t),z)\|^2_{\Ll} \la dt]\Big)^{\frac{1}{2}}\nonumber\\
& \leq |\mathbb{E}[\int_0^T e^{-\tilde{\Lambda} t}\int_Z (H^n(u^n(t),z)-H_0(t,z),H(v(t),z))_{\Ll} \la dt]| \nonumber\\&\quad 
+c\Big(\mathbb{E}[\int_0^T \int_Z \|H^n(v(t),z)-H(v(t),z)\|^2_{\Ll} \la dt]\Big)^{\frac{1}{2}}
\end{align}
where by using Assumption \ref{hypo.noi}, Proposition \ref{prop} we observe that
\begin{align*}
c:&=\sup_{n \in \mathbb{N}}\Big(\mathbb{E}[\int_0^T e^{-2\tilde{\Lambda} t} \int_Z \|(H^n(u^n(t),z)\|^2_{\Ll} \la dt] \Big)\\& 
\leq \sup_{n \in \mathbb{N}}\Big(\mathbb{E}[\int_0^T e^{-2\tilde{\Lambda} t} \int_Z \|(H(u^n(t),z)\|^2_{\Ll} \la dt] \Big)\\& 
\leq K \sup_{n \in \mathbb{N}}\Big(\mathbb{E}[\int_0^T e^{-2\tilde{\Lambda} t}(1+\|u^n(t))\|^2_{\Ll}) dt] \Big) \\ & \leq C_T + C_T\sup_{n \in \mathbb{N}}\mathbb{E}\Big(\sup_{t \in [0,T]}\|u^n(t))\|^2_{\Ll} \Big)<\infty.
\end{align*}
Thus using \eqref{w.H} first term on the right hand side of \eqref{w.iv} tends to zero. Recalling $P_n H(v(\cdot),\cdot)= H^n(v(\cdot),\cdot),$ 
 using Lemma \ref{Pn.lem} and \eqref{eq.sig.n1}, we have for almost all $t \in [0,T]$ and $P-$a.e. $\omega \in \Omega,$ all $z \in Z,$
\begin{align} \label{conv.H}
&\lim_{n \rightarrow \infty} \|H^n(v(t),z)-H(v(t),z)\|_{\Ll}=0\quad \mbox{and} \notag \\
 &\|H^n(v(t),z)-H(v(t),z)\|_{\Ll} \leq 2 \|H(v(t),z)\|_{\Ll}.
\end{align}
Hence by applying Lebesgue Dominated Convergence Theorem, we have
\begin{align} \label{fin.conv.H}
\lim_{n\to\infty}\mathbb{E} \Big[\int_0^T \int_Z \|H^n(v(t),z)-H(v(t),z)\|_{\Ll}^2 \la dt \Big]  = 0,
\end{align}
proving that $H^n(v(\cdot),\cdot) \rightarrow H(v(\cdot),\cdot)$ strongly in $\mathbb{H}_\lambda^2 ([0,T]\times Z;\Ll),$ which also implies that the second term on the right hand side of \eqref{w.iv} goes to zero, ensuring \eqref{weak.H}. 
\item[{\textit{(iv)}}] Using \eqref{w.u} we directly have \eqref{weak.u}.
\item[{\textit{(v)}}] It directly follows from \eqref{fin.conv.si}.
\item[{\textit{(vi)}}] It directly follows from \eqref{fin.conv.H}.
\end{itemize}

\noindent
\textbf{Step IV : Consequences of monotonicity argument }
\ada{Using the monotonicity property of $F(\cdot)$ (see Remark \ref{monotone2} and \eqref{monotone1}) and Assumption \ref{Hyp}}, we have for $v\in \gr{L^2(\Omega; L^\infty(0,T;\mathbb{L}^2(\mathcal{O}))}\cap L^2(0,T;\h(\mathcal{O})))$
\gr{\begin{IEEEeqnarray*}{lCl}
2\mathbb{E}[\int_0^T e^{-\tilde{\Lambda} t}\langle F(u^n(t))-F(v(t)),hu^n(t)-hv(t)\rangle dt]\\
-\mathbb{E}[\int_0^T e^{-\tilde{\Lambda} t}\|\s^n(t,u^n(t))-\s^n(t,v(t))\|_{L_Q(\mathbb{L}^2,\Ll)}^2dt] \\
-\mathbb{E}[\int_0^T e^{-\tilde{\Lambda} t}\int_Z\|H^n(u^n(t),z)-H^n(v(t),z)\|^2_{\Ll} \la dt]\\
+\mathbb{E}[\int_0^T \tilde{\Lambda} e^{-\tilde{\Lambda} t}\|u^n(t)-v(t)\|_{\Ll}^2 dt] \ \geq 0.\IEEEyesnumber
\end{IEEEeqnarray*}
Splitting each of the inner products and norms and then on rearranging we obtain,
\begin{IEEEeqnarray}{lCl}
&-&2\mathbb{E}[\int_0^T e^{-\tilde{\Lambda} t}\langle F(u^n(t)),hu^n(t)-hv(t)\rangle dt]
+\mathbb{E}[\int_0^T e^{-\tilde{\Lambda} t}\|\s^n(t,u^n(t))\|_{L_Q(\mathbb{L}^2,\Ll)}^2dt] \notag \\
&+&\mathbb{E}[\int_0^T e^{-\tilde{\Lambda} t}\int_Z\|H^n(u^n(t),z)\|^2_{\Ll} \la dt]
-\mathbb{E}[\int_0^T \tilde{\Lambda} e^{-\tilde{\Lambda} t}\|u^n(t)\|_{\Ll}^2 dt] \notag \\
&\leq & -2\mathbb{E}[\int_0^T e^{-\tilde{\Lambda} t}\langle F(v(t)),hu^n(t)-hv(t)\rangle dt]
-\mathbb{E}[\int_0^T e^{-\tilde{\Lambda} t}\|\s^n(t,v(t))\|_{L_Q(\mathbb{L}^2,\Ll)}^2dt] \notag \\
&&+2\mathbb{E}[\int_0^T e^{-\tilde{\Lambda} t}(\s^n(u^n(t)),\s^n(v(t)))_{L_Q(\mathbb{L}^2,\Ll)}dt] \notag \\
&&
-\mathbb{E}[\int_0^T e^{-\tilde{\Lambda} t}\int_Z\|H^n(v(t),z)\|^2_{\Ll} \la dt] \notag \\
&&+2\mathbb{E}[\int_0^T e^{-\tilde{\Lambda} t}\int_Z\ (H^n(u^n(t),z),H^n(v(t),z))_{\Ll} \la dt] \notag \\
&&+\mathbb{E}[\int_0^T \tilde{\Lambda} e^{-\tilde{\Lambda} t}\|v(t)\|_{\Ll}^2 dt]
-2\mathbb{E}[\int_0^T \tilde{\Lambda} e^{-\tilde{\Lambda} t}(u^n(t),v(t))_{\Ll} dt].
\end{IEEEeqnarray}


In view of \eqref{weak.F}, \eqref{weak.sig}-\eqref{weak.norm.H}, and taking limit in $n$ and using \eqref{eq25} we conclude that
\begin{align*}
&-2\mathbb{E}[\int_0^T e^{-\tilde{\Lambda} t}\langle F_0(t),hu(t)-hv(t)\rangle dt]
+\mathbb{E}[\int_0^T e^{-\tilde{\Lambda} t}\| \s_0(t)\|_{L_Q(\mathbb{L}^2,\Ll)}^2dt] \\
&+\mathbb{E}[\int_0^T e^{-\tilde{\Lambda} t}\int_Z\| H_0(t,z)\|^2_{\Ll} \la dt]
-\mathbb{E}[\int_0^T \tilde{\Lambda} e^{-\tilde{\Lambda} t}\| u(t)\|_{\Ll}^2 dt]\\
&\leq -2\mathbb{E}[\int_0^T e^{-\tilde{\Lambda} t}\langle F(v(t)),hu(t)-hv(t)\rangle dt]-\mathbb{E}[\int_0^T e^{-\tilde{\Lambda} t}\| \s(t,v(t))\|_{L_Q(\mathbb{L}^2,\Ll)}^2dt] \\
&\quad+2\mathbb{E}[\int_0^T e^{-\tilde{\Lambda} t}( \s_0(t), \s(v(t)))_{L_Q(\mathbb{L}^2,\Ll)}dt]-\mathbb{E}[\int_0^T e^{-\tilde{\Lambda} t}\int_Z\| H(v(t),z)\|^2_{\Ll} \la dt]\\
&\quad+2\mathbb{E}[\int_0^T e^{-\tilde{\Lambda} t}\int_Z\ ( H_0(t,z), H(v(t),z))_{\Ll} \la dt]\\
&\quad+\mathbb{E}[\int_0^T \tilde{\Lambda} e^{-\tilde{\Lambda} t}\|v(t)\|_{\Ll}^2 dt]
-2\mathbb{E}[\int_0^T \tilde{\Lambda} e^{-\tilde{\Lambda} t}(u(t),v(t))_{\Ll} dt].
\end{align*}}
Rearranging the terms
\begin{IEEEeqnarray*}{lCl}
\gr{-2\mathbb{E}[\int_0^T e^{-\tilde{\Lambda} t}\langle F_0(t)-F(v(t)),hu(t)-hv(t)\rangle dt]}\\
+\mathbb{E}[\int_0^T e^{-\tilde{\Lambda} t}\|\s_0(t)-\s(t,v(t))\|_{L_Q(\mathbb{L}^2,\Ll)}^2dt] \\
+\mathbb{E}[\int_0^T e^{-\tilde{\Lambda} t}\int_Z\|H_0(t,z)-H(v(t),z)\|^2_{\Ll} \la dt]\\
-\mathbb{E}[\int_0^T \tilde{\Lambda} e^{-\tilde{\Lambda} t}\|u(t)-v(t)\|_{\Ll}^2 dt]\ \leq 0. \yesnumber\label{limiteqn}
\end{IEEEeqnarray*}
\gr{The above estimate holds for any $v\in L^2(\Omega; L^\infty(0,T;\mathbb{L}^2(\mathcal{O}))\cap L^2(0,T;\h(\mathcal{O})))$. Choosing $v(\cdot)=u(\cdot)$, we can immediately observe  $\s_0(\cdot)=\s(\cdot,u(\cdot))$ and $H_0(\cdot,\cdot)=H(u(\cdot),\cdot)$.}
Now we take $v=u+\lambda w$ in \eqref{limiteqn}, where $\lambda>0$ and $w$ is an adapted process in $L^2(\Omega; L^\infty(0,T;\mathbb{L}^2(\mathcal{O}))\cap L^2(0,T;\h(\mathcal{O})))$. Then we have 
\gr{\begin{align*}
&\lambda\mathbb{E}[\int_0^T e^{-\tilde{\Lambda} t}\langle F(u(t)+\lambda w(t)),hw(t)\rangle dt]+ \lambda^2 \tilde{\Lambda} \mathbb{E}[\int_0^T e^{-\tilde{\Lambda} t}\|w(t)\|_{\Ll}^2 dt]\\& \geq \lambda\mathbb{E}[\int_0^T e^{-\tilde{\Lambda} t}\langle F_0(t),hw(t)\rangle dt].
\end{align*}}
Dividing by $\lambda$ on both sides of the inequality above and letting $\lambda$ to go to 0, we have by the hemicontinuity of $F(\cdot)$
\begin{equation*}
\mathbb{E}[\int_0^T e^{-\tilde{\Lambda} t}\langle F(u(t))-F_0(t),hw(t)\rangle dt]\geq 0.
\end{equation*}
Since $w$ is arbitrary and $h$ is  a positive, bounded, continuously differentiable function,  $F_0(t)=F(u(t))$. This proves the existence of a strong solution.\\\\
\noindent
\textbf{Step V : Uniqueness}:\\
If the pairs $(u,\z)$ and $(v,\tilde{z})$ are two solutions to \eqref{me1}-\eqref{me3}, then $w(t):=u(t)-v(t)$ and $\zeta(t):=\z(t)-\tz(t)$ solves the stochastic integral equation
\begin{align} \label{uniq.u}
&dw(t)+Aw(t)dt+g\nabla\zeta(t)dt \notag\\
&=[B(v(t))-B(u(t))]dt+(\s(t,u(t))-\s(t,v(t)))dW(t)\notag \\
&\quad+\int_Z (H(u(t-),z)-H(v(t-),z))\n .
\end{align}
and 
\begin{equation} \label{uniq.z}
d\zeta(t)+Div(hw(t)) dt=0.
\end{equation}
Define the stopping time as:
\begin{equation}
\tau_N=\inf\{t \geq 0:\|w(t)\|_{\Ll}^2+\|\zeta(t)\|_{L^2}^2+\int_0^t\|w(s)\|^2_{\mathbb{H}_0^1}ds>N\}.
\end{equation}
Applying It\^o's lemma to the \ada{process $\|w(t)\|_{\Ll}^2$} 
\begin{align*}
&\|w(\L)\|_{\Ll}^2+2\alpha\int_0^{\L}\|w(s)\|_{\h}^2 ds+2g\int_0^{\L}\gr{\langle \nabla\zeta(s),w(s)\rangle ds}\\
&=\|w(0)\|_{\Ll}^2+2\int_0^{\L}[(B(v(s))-B(u(s)),u(s)-v(s))_{\Ll}]ds\\
&\quad+2\int_0^{\L}(\s(s,u(s))-\s(s,v(s))dW(s),w(s))_{\Ll} \\
&\quad+\int_0^{\L}\|\s(s,u(s))-\s(s,v(s))\|_{L_Q(\mathbb{L}^2,\Ll)}^2 ds\\
&\quad+\int_0^{\L}\int_Z \|H(u(s-),z)-H(v(s-),z)\|^2_{\Ll} N(ds,dz) \\
&\quad+2\int_0^{\L}\int_Z \left((H(u(s-),z)-H(v(s-),z)),w(s-)\right)_{\Ll}\ns .
\end{align*}
Using the result from Lemma \ref{mon}, inequality \eqref{eq26} \gr{and $w(0)=0,$}
\begin{align}
\label{eq27}
&\|w(\L)\|_{\Ll}^2+2\alpha\int_0^{\L}\|w(s)\|_{\h}^2 ds\notag\\
&\leq \dfrac{2g^2}{\alpha}\int_0^{\L}\|\zeta(s)\|_{L^2}^2 ds+\dfrac{\alpha}{2}\int_0^{\L}\|w(s)\|_{\h}^2 ds\notag\\
&\quad+2\int_0^{\L}(\s(s,u(s))-\s(s,v(s)) dW(s),w(s))_{\Ll}\notag\\&\quad+\int_0^{\L}\|\s(s,u(s))-\s(s,v(s))\|_{L_Q(\mathbb{L}^2,\Ll)}^2 ds\notag\\
&\quad+\int_0^{\L}\int_Z \|H(u(s-),z)-H(v(s-),z)\|^2_{\Ll} N(ds,dz)\notag \\
&\quad+2\int_0^{\L}\int_Z \left((H(u(s-),z)-H(v(s-),z)),w(s-)\right)_{\Ll}\ns .
\end{align}

Taking \eqref{uniq.z} inner product with $\zeta(t):=\z(t)-\tz(t)$ \gr{and using $\zeta(0)=0$}, we have as in equation \eqref{eq9}
\begin{align}
\label{eq28}
\|\zeta(\L)\|_{L^2}^2
&\leq M\,\int_0^{\L}\|w(s)\|_{\Ll}^2 ds + \left(\dfrac{2\mu^2}{\alpha}+M\right)\int_0^{\L}\|\zeta(s)\|_{L^2}^2 ds \notag\\&\quad+ \dfrac{\alpha}{2}\int_0^{\L}\|w(s)\|_{\h}^2 ds.
\end{align}
Let
$C=\dfrac{2g^2}{\alpha}+\dfrac{2\mu^2}{\alpha}+M.$
Adding equations \eqref{eq27} and \eqref{eq28}, then taking expectation \gr{and using It\^{o}-L\'evy isometry we achieve}
\begin{align*}
&\mathbb{E}[\|w(t\wedge\tau_N)\|_{\Ll}^2+\|\zeta(t\wedge\tau_N)\|_{L^2}^2]+\mathbb{E}[\int_0^{t\wedge\tau_N} \alpha\|w(s)\|_{\h}^2 ds]\\
&\leq C\mathbb{E}[\int_0^{t\wedge\tau_N}(\|w(s)\|_{\Ll}^2+\|\zeta(s)\|_{L^2}^2) ds]\\
&\quad+\mathbb{E}[\int_0^{t\wedge\tau_N} \|\s(s,u(s))-\s(s,v(s))\|_{L_Q(\mathbb{L}^2,\Ll)}^2 ds]\\
&\quad+\mathbb{E}[\int_0^{t\wedge\tau_N} \int_Z \|H(u(s),z)-H(v(s),z)\|^2_{\Ll} \la ds].
\end{align*}
Applications of Assumption \ref{Hyp} and Gronwall's inequality yield
\begin{align*}
\mathbb{E}[\|w(t\wedge\tau_N)\|_{\Ll}^2+\|\zeta(t\wedge\tau_N)\|_{L^2}^2]+\alpha\mathbb{E}[\int_0^{t\wedge\tau_N} \|w(s)\|_{\h}^2 ds]\leq 0.
\end{align*}
As $N\to\infty, t\wedge\tau_N \to t$ a.s., and hence the uniqueness of pathwise strong solution follows.

\section{Stochastic Optimal Control}\label{stochastic}
\subsection{Preliminaries}
In this Subsection we provide some  definitions and known results borrowed from M\'{e}tivier \cite{metivier} and Aldous \cite{ald}.
\begin{definition}
Let $(\mathbb{S},\rho)$ be a separable and complete metric space. Let $u\in D([0,T];\mathbb{S})$ and let $\delta>0$ be given. A modulus of $u$ is defined by
\begin{equation}
w_{[0,T],\mathbb{S}}(u,\delta):=\inf_{\Pi_\delta}\max_{t_i\in\overline{\omega}}\sup_{t_i\leq s<t<t_{i+1}\leq T}\rho(u(t),u(s)),
\end{equation}
where $\Pi_\delta$ is the set of all increasing sequences $\overline{\omega}=\{0=t_0<t_1<\ldots<t_n=T\}$ with the following property
\begin{equation*}
t_{i+1}-t_{i}\geq \delta,\qquad i=0,\ldots ,n-1.
\end{equation*}
\end{definition}
\begin{theorem}
\label{cmpct}
A set $A\subset D([0,T];\mathbb{S})$ has compact closure iff it satisfies the following two conditions:\\
\begin{itemize}
\item[(a)] there exists a dense subset $I\subset [0,T]$ such that for every $t\in I$ the set $\{u(t),u\in A\}$ has compact closure in $\mathbb{S}$,
\item[(b)]$\lim_{\delta\rightarrow 0}\sup_{u\in A}w_{[0,T],\mathbb{S}}(u,\delta)=0$.
\end{itemize}
 \end{theorem}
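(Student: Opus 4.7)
The plan is to establish both implications of this Arzel\`a--Ascoli--type characterization for the Skorokhod space $\D(0,T;\mathbb{S})$. Since neither direction is immediate (the evaluation map $u \mapsto u(t)$ is not continuous in the Skorokhod topology at jump times, and the modulus $w_{[0,T],\mathbb{S}}$ is tailored to c\`adl\`ag paths through the partition set $\Pi_\delta$), the argument must reduce compactness statements in $\D(0,T;\mathbb{S})$ to compactness statements in $\mathbb{S}$ on a sufficiently rich set of times.

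For the necessity direction, suppose $A$ has compact closure in $(\D(0,T;\mathbb{S}),\delta_T)$. First I would identify a set $I\subset[0,T]$ consisting of times at which all limit points of $A$ are continuous; since every c\`adl\`ag path has at most countably many discontinuities, a countable union argument shows $[0,T]\setminus I$ is at most countable, so $I$ is dense. At such a $t$ the evaluation functional $u\mapsto u(t)$ is continuous in the Skorokhod topology, hence it maps $\overline{A}$ to a compact subset of $\mathbb{S}$, giving (a). For (b), I would exploit total boundedness: given $\varepsilon>0$, cover $\overline{A}$ by finitely many $\delta_T$-balls $B(u_i,\varepsilon/3)$; for each center $u_i$ there is a partition $\overline{\omega}_i\in\Pi_{\delta_i}$ on which the oscillation of $u_i$ inside each subinterval is at most $\varepsilon/3$, and any $u\in A$ lying in $B(u_i,\varepsilon/3)$ inherits the same bound (up to a time change) on a partition of comparable mesh. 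Taking $\delta$ smaller than $\min_i\delta_i$ then uniformly bounds $w_{[0,T],\mathbb{S}}(u,\delta)$ for all $u\in A$.

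For the sufficiency direction, fix a sequence $(u_n)\subset A$ and let $I$ be the countable dense set from (a). A standard diagonal extraction produces a subsequence $(u_{n_k})$ such that $u_{n_k}(t)$ converges in $\mathbb{S}$ for every $t\in I$. The modulus condition (b) is then used to upgrade this pointwise-on-$I$ convergence to Skorokhod convergence: for each $\delta>0$ pick a partition $\overline{\omega}\in\Pi_\delta$ with nodes in $I$, approximate each $u_{n_k}$ by the step function built from its values on $\overline{\omega}$, and show these step approximants converge uniformly after a suitably chosen time change $\lambda_{n_k}$ aligning their jump locations. Letting $\delta\to 0$ along a sequence for which the residual oscillation (controlled by (b)) tends to $0$, one obtains a c\`adl\`ag limit $u\in\D(0,T;\mathbb{S})$ and homeomorphisms $\lambda_{n_k}\to\mathrm{id}$ with $u_{n_k}\circ\lambda_{n_k}\to u$ uniformly on $[0,T]$, which is exactly Skorokhod convergence by the remark following the definition of $\delta_T$.

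The main obstacle is the construction of the time changes $\lambda_{n_k}$ in the sufficiency step: they must be built so that corresponding partition nodes are matched across the sequence while keeping both $\sup_t|t-\lambda_{n_k}(t)|$ and the slope penalty $\sup_{s\neq t}\bigl|\log((\lambda_{n_k}(t)-\lambda_{n_k}(s))/(t-s))\bigr|$ small, and so that the discrete approximants on successively finer partitions glue consistently into a genuine c\`adl\`ag limit. Enforcing that the endpoints $0$ and $T$ always belong to the chosen partitions, and refining partitions so that their nodes stay inside the prescribed dense set $I$, requires some care but is standard in the Billingsley--Metivier framework.
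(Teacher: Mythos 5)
The first thing to note is that the paper does not prove this statement at all: Theorem \ref{cmpct} is quoted as a known compactness criterion borrowed from Metivier and Aldous (it is the classical Arzel\`a--Ascoli-type theorem for Skorokhod space), so your proposal can only be measured against the standard argument, not against anything in the text. Judged on its own terms, the sufficiency half and the necessity of (b) follow the usual Billingsley--Metivier outline (total boundedness, step-function approximants on partitions with nodes in a countable dense subset of $I$, time changes aligning the nodes), and while the genuinely hard work --- constructing the homeomorphisms $\lambda_{n_k}$ with small sup- and log-slope penalties and gluing the approximants consistently --- is only gestured at, that part of the plan is at least aimed in the right direction.

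The necessity of (a), however, rests on a false claim. You take $I$ to be the set of times at which \emph{all} elements of $\overline{A}$ are continuous and assert that ``a countable union argument'' makes its complement countable, hence $I$ dense. Each single c\`adl\`ag path has countably many jumps, but a compact subset of $\D(0,T;\mathbb{S})$ is in general uncountable, and its members' jump times can fill an interval: take $0<a<b<T$ and $A=\{\chi_{[s,T]}:s\in[a,b]\}\subset\D(0,T;\mathbb{R})$. The map $s\mapsto\chi_{[s,T]}$ is continuous for the metric $\delta_T$ (use the piecewise linear homeomorphism sending $s$ to $s_n$; the sup-distance, $\sup_t|t-\lambda(t)|$ and the log-slope term all tend to $0$), so $A$ is compact, yet every $t\in[a,b]$ is a jump time of some member, and your set $I$ of common continuity points is $[0,T]\setminus[a,b]$, which is not dense. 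So the route ``continuity of the evaluation map at common continuity times'' cannot yield (a) in general, even though (a) itself is true --- indeed it holds for \emph{every} $t$: given $u_n\in A$, extract $u_{n_k}\to u$ in $\delta_T$, write $u_{n_k}(t)=(u_{n_k}\circ\lambda_k)(\lambda_k^{-1}(t))$ with $\lambda_k\to\mathrm{id}$ and $u_{n_k}\circ\lambda_k\to u$ uniformly, and observe that any limit point of $u(\lambda_k^{-1}(t))$ lies in $\{u(t),u(t-)\}$; hence $\{u(t):u\in A\}$ is sequentially relatively compact in $\mathbb{S}$. Replacing your density argument by this direct extraction (or by the corresponding argument in Metivier/Billingsley) repairs the necessity direction; as written, that step is a genuine gap.
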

Let us consider the ball $\mathbb{B}:=\{x\in \Ll(\mathcal{O}): \ada{\|x\|_{\Ll}\leq r}\}$. If $\mathbb{B}_w$ denote the ball $\mathbb{B}$ endowed with the weak topology of $\Ll$, then it is well-known that $\mathbb{B}_w$ is metrizable. Let $D([0,T]; \mathbb{B}_w)$ be the space of weakly c\`adl\`ag functions $u:[0, T]\to \Ll$ such that $\sup_{t\in [0, T]} \ada{\|u(t)\|_{\Ll}}\leq r$. Note that, the space $D([0,T]; \mathbb{B}_w)$ is metrizable as well. Then we have the following lemma from Lemma 2 of \cite{motyl2013stochastic}.
\begin{lemma}
	Let $u_n: [0, T]\to \Ll, \ n\in\mathbb{N}$ be functions such that 
	\begin{enumerate}
		\item $\sup_{n\in\mathbb{N}} \sup_{s\in [0, T]} \ada{\|u_n(s)\|_{\Ll}\leq r}$,
		\item $u_n\to u$ in $D([0,T]; \mathbb{H}^{-1})$.
	\end{enumerate}
	Then $u, u_n\in D([0,T]; \mathbb{B}_w)$ and $u_n\to u$ in $ D([0,T]; \mathbb{B}_w)$ as $n\to\infty$.
\end{lemma}

We now deal with the following functional spaces endowed with the respective topologies:
$$D([0,T];\mathbb{H}^{-1}(\mathcal{O}))_J:=\,\, \mbox{the  the space of c\`adl\`ag functions} \,\, u:[0, T]\to \mathbb{H}^{-1}(\mathcal{O})$$
with the extended Skorokhod topology $\tau_1,$ 
$$L^2 _w(0,T;\h(\mathcal{O})):=\,\,\mbox{the space}\,\, L^2 (0,T;\h(\mathcal{O}))\, \, \mbox{with the weak topology}\,\, \tau_2,$$
$ D([0,T];\Ll_w(\mathcal{O})):=\,\,\mbox{the  the space of all weakly c\`adl\`ag functions} \,\, u:[0, T]\to$ \\ $\Ll(\mathcal{O})$ with the weakest topology $\tau_3$ such that for all $h\in \Ll(\mathcal{O})$ the mappings
$$D([0,T];\Ll_w(\mathcal{O})) \ni u \mapsto (u(\cdot),h)_{\Ll} \in D([0,T];\mathbb{R})$$ are continuous.
$$L^2(0,T;\Ll(\mathcal{O}))\,\,\mbox{is endowed with its strong topology }\,\,\tau_4.$$

We take the path space $\mathcal{Z} = D([0,T];\mathbb{H}^{-1}(\mathcal{O}))_J\cap D([0,T];\Ll_w(\mathcal{O}))\cap L^2_w(0,T;\h(\mathcal{O}))\cap L^2(0,T;\Ll(\mathcal{O}))$ and $\tau$ be the supremum of the corresponding topologies.
\begin{theorem}
\label{cpt}
A set $\mathcal{K}\subset \mathcal{Z}$ is $\tau$-relatively compact if the following three conditions hold:
\begin{itemize}
\item[(a)]$\forall u\in \mathcal{K}$ and all $t\in [0,T],u(t)\in\Ll(\mathcal{O})$ and $\sup_{u\in \mathcal{K}}\sup_{s\in [0,T]}\|u(s)\|_{\Ll}<\infty$,
\item[(b)]$\sup_{u\in \mathcal{K}}\int_{0}^T\|u(s)\|_{\h}^2 ds<\infty$, i.e. $\mathcal{K}$ is bounded in $L^2(0, T; \h(\mathcal{O}))$,
\item[(c)]$\lim_{\delta\rightarrow 0}\sup_{u\in \mathcal{K}} w_{[0,T],\mathbb{H}^{-1}(\mathcal{O})}(u,\delta)=0$.
\end{itemize}
\end{theorem}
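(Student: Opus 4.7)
Since $\tau$ is the supremum of the four topologies $\tau_1,\tau_2,\tau_3,\tau_4$, my plan is to establish that $K$ is sequentially relatively compact in each of them separately and then build, by successive (diagonal) subsequence extraction, a single subsequence converging in all four topologies to the same limit. The topologies $\tau_1,\tau_2,\tau_3$ are sequentially adequate on bounded sets of the relevant underlying Banach spaces, so this suffices for $\tau$-relative compactness.

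For $\tau_2$: condition (a) places $K$ in a ball of $L^\infty(0,T;\Ll(\mathcal{O}))$, which is the dual of the separable space $L^1(0,T;\Ll(\mathcal{O}))$, so Banach-Alaoglu yields sequential weak-star relative compactness. For $\tau_3$: condition (b) places $K$ in a ball of the reflexive Hilbert space $L^2(0,T;\h(\mathcal{O}))$, so the Eberlein-\v{S}mulian theorem yields weak relative compactness. For $\tau_1$: I would apply Theorem \ref{cmpct} with $\mathbb{S}=\mathbb{H}^{-1}(\mathcal{O})$. Condition (a) combined with the compact embedding $\Ll(\mathcal{O})\hookrightarrow\mathbb{H}^{-1}(\mathcal{O})$ (valid on the bounded domain $\mathcal{O}\subset\mathbb{R}^2$ via Rellich-Kondrachov and duality) shows $\{u(t):u\in K\}$ is relatively compact in $\mathbb{H}^{-1}(\mathcal{O})$ for every $t\in[0,T]$, verifying requirement (a) of Theorem \ref{cmpct}; requirement (b) of Theorem \ref{cmpct} is precisely our assumption (c).

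The delicate step, and the main obstacle, is strong $L^2(0,T;\Ll(\mathcal{O}))$-compactness, i.e. the $\tau_4$-part. Here I would invoke an Aubin-Lions-Simon argument tailored to c\`adl\`ag paths. Having passed to a subsequence $u_n$ that converges in $\tau_1,\tau_2,\tau_3$ to a limit $u$, I introduce the backward time-average $u_\eta(t):=\frac{1}{\eta}\int_{(t-\eta)\vee 0}^{t}u(s)\,ds$ and split
\begin{equation*}
\|u_n-u\|_{L^2(0,T;\Ll)}\leq \|u_n-(u_n)_\eta\|_{L^2(0,T;\Ll)}+\|(u_n)_\eta-u_\eta\|_{L^2(0,T;\Ll)}+\|u_\eta-u\|_{L^2(0,T;\Ll)}.
\end{equation*}
The two outer terms are controlled by the interpolation inequality $\|v\|_{\Ll}\leq C\|v\|_{\h}^{1/2}\|v\|_{\mathbb{H}^{-1}}^{1/2}$ together with the uniform $L^2(0,T;\h)$-bound from (b) and the uniform $\mathbb{H}^{-1}$-modulus from (c); this makes them small uniformly in $n$ as $\eta\to 0$. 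The middle term, with $\eta$ fixed, is handled via the pointwise $\mathbb{H}^{-1}$-convergence supplied by $\tau_1$ on a dense subset of $[0,T]$, the uniform $L^2(0,T;\h)$-bound, the interpolation inequality again, and dominated convergence. Letting $n\to\infty$ and then $\eta\to 0$ delivers strong $L^2(0,T;\Ll)$-convergence along a subsequence, completing the verification of $\tau$-relative compactness.
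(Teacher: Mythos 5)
Your treatment of $\tau_1,\tau_2,\tau_3$ coincides with the paper's: Banach--Alaoglu (resp.\ reflexivity) on the balls given by (a) and (b) for the weak-star and weak topologies, and Theorem \ref{cmpct} combined with the compact embedding $\Ll(\mathcal{O})\subset\mathbb{H}^{-1}(\mathcal{O})$ and hypothesis (c) for the Skorokhod part, with metrizability on bounded sets reducing everything to sequential compactness. Where you genuinely diverge is the strong $L^2(0,T;\Ll(\mathcal{O}))$ step. The paper uses no time-averaging: from $u_n\to u$ in $\D(0,T;\mathbb{H}^{-1}(\mathcal{O}))$ it gets $u_n(t)\to u(t)$ in $\mathbb{H}^{-1}(\mathcal{O})$ at every continuity point of $u$, hence for a.e.\ $t$, applies dominated convergence (justified by (a)) to obtain $u_n\to u$ strongly in $L^2(0,T;\mathbb{H}^{-1}(\mathcal{O}))$, and then concludes with the additive Lions--Ehrling inequality $\|v\|_{\Ll}^2\le\epsilon\|v\|_{\h}^2+C_\epsilon\|v\|_{\mathbb{H}^{-1}}^2$ together with the uniform bound (b). Your Aubin--Lions--Simon splitting with the averages $(u_n)_\eta$ can be made to work, but it is heavier and hides one delicate point you should make explicit: the c\`adl\`ag modulus in (c) does \emph{not} bound $\|u_n(t)-u_n(s)\|_{\mathbb{H}^{-1}}$ for all $|t-s|\le\eta$, because increments across jump times are not controlled; you must separate the set of times $t$ lying within $\eta$ of a partition point of a near-optimal partition (of measure at most $T\eta/\delta$, since there are at most $T/\delta$ subintervals of length $\ge\delta$) and estimate there by the uniform $\Ll$-bound from (a). With that correction the outer terms are indeed small uniformly in $n$ as $\eta\to0$, and your middle term for fixed $\eta$ follows, as you say, from pointwise $\mathbb{H}^{-1}$-convergence plus dominated convergence. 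What the paper's route buys is economy: once Skorokhod convergence yields a.e.\ pointwise convergence in $\mathbb{H}^{-1}(\mathcal{O})$, the additive interpolation inequality finishes the argument in two lines, making the mollification in time unnecessary. What your route buys is proximity to the classical Simon criterion: it does not pass through strong $L^2(0,T;\mathbb{H}^{-1}(\mathcal{O}))$ convergence as an intermediate object, though in the present setting that intermediate step comes for free.
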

\noindent For proof see Lemma 3.3 in \cite{BM}, Lemma 4.1 in \cite{motyl14}, Theorem 2 of \cite{motyl2013stochastic}, Lemma 2.7 in \cite{MRo}
\begin{definition}
Let $(\mathbb{S},\rho)$ be a separable and complete metric space. Let $(\Omega ,\mathcal{F},P)$ be a probability space with the filtration $F:=(\mathcal{F}_t)_{t\in[0,T]}$ satisfying the usual hypotheses, and let $(X_n)_{n\in\mathbb{N}}$ be a sequence of c\`adl\`ag, $F$-adapted and $\mathbb{S}$-valued processes. $(X_n)_{n\in\mathbb{N}}$ is said to satisfy the Aldous condition iff $\forall\,\epsilon>0,\;\forall\,\eta>0,\;\exists\,\delta>0$ such that for every sequence $(\tau_n)_{n\in\mathbb{N}}$ of stopping times with $\tau_n\leq T$
\begin{equation*}
\sup_{n\in\mathbb{N}}\sup_{0\leq\theta\leq\delta}P\{\rho(X_n(\tau_n+\theta),X_n(\tau_n))\geq\eta\}\leq\epsilon.
\end{equation*}
\end{definition}
\begin{lemma}
\label{aldt}
Let $(X_n)_{n\in\mathbb{N}}$ satisfies the Aldous condition. Let $\mathbb{P}^n$ be the law of $X_n$ on $D([0,T];\mathbb{S}),n\in\mathbb{N}$. Then for every $\epsilon>0$ there exists a subset $A_\epsilon\subset D([0,T];\mathbb{S})$ such that 
\begin{equation*}
\sup_{n\in\mathbb{N}}\mathbb{P}^n(A_\epsilon)\geq 1-\epsilon\quad \mbox{and}\quad
\lim_{\delta\rightarrow 0}\sup_{u\in A_{\epsilon}}w_{[0,T],\mathbb{S}}(u,\delta)=0.
\end{equation*}
\end{lemma}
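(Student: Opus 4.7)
The plan is to deduce the statement from the Aldous condition in two stages: first, upgrade the condition to a uniform-in-$n$ control of the Skorokhod modulus of $X_n$; second, extract $A_\epsilon$ by a diagonal union-bound argument on the resulting probability estimates.

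The key intermediate claim is: for every $\eta,\epsilon>0$ there exists $\delta>0$ such that
\begin{equation*}
\sup_{n\in\mathbb{N}}P\bigl(w_{[0,T],\mathbb{S}}(X_n,\delta)>\eta\bigr)\leq\epsilon.
\end{equation*}
To prove this, I would introduce, for each $n$, the auxiliary sequence of stopping times $\tau^n_0:=0$ and
\begin{equation*}
\tau^n_{k+1}:=\inf\{t>\tau^n_k:\rho(X_n(t),X_n(\tau^n_k))>\eta\}\wedge T,
\end{equation*}
which are $F$-stopping times by the right-continuity of $X_n$. On the event that $\tau^n_{k+1}-\tau^n_k\geq\delta$ for every $k$ with $\tau^n_k<T$, the (almost surely finite) family $\{\tau^n_k\}\cup\{T\}$ forms an admissible $\delta$-partition, and on each of its intervals $[\tau^n_k,\tau^n_{k+1})$ the oscillation of $X_n$ is at most $2\eta$ by the definition of $\tau^n_{k+1}$ together with the triangle inequality; hence $w_{[0,T],\mathbb{S}}(X_n,\delta)\leq 2\eta$ on this event. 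On the complementary event there exist $k$ and $\theta\in[0,\delta)$ with $\rho(X_n(\tau^n_k+\theta),X_n(\tau^n_k))>\eta$; the probability of such a fast increment at a stopping time is controlled by the Aldous condition applied to $\tau^n_k$, combined with a Markov-type bound on the random number of $\tau^n_k$'s below $T$ (itself extracted from a coarser Aldous estimate). After rescaling $\eta\mapsto\eta/2$, the intermediate claim follows. This is the classical Aldous tightness criterion and can alternatively be invoked directly from M\'etivier \cite{metivier} or Aldous \cite{ald}.

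With the intermediate claim available, the construction of $A_\epsilon$ is a standard diagonal argument. Fix $\epsilon>0$, set $\eta_k:=1/k$, and for each $k\geq 1$ apply the intermediate claim with $(\eta_k,\epsilon/2^k)$ in place of $(\eta,\epsilon)$ to obtain $\delta_k>0$ with
\begin{equation*}
\sup_n P\bigl(w_{[0,T],\mathbb{S}}(X_n,\delta_k)>\eta_k\bigr)\leq\epsilon/2^k.
\end{equation*}
Set
\begin{equation*}
A_\epsilon:=\{u\in\D(0,T;\mathbb{S}):w_{[0,T],\mathbb{S}}(u,\delta_k)\leq\eta_k\text{ for every }k\in\mathbb{N}\}.
\end{equation*}
A union bound yields $P_n(A_\epsilon^c)\leq\sum_{k\geq 1}\epsilon/2^k=\epsilon$ uniformly in $n$, so $\sup_n P_n(A_\epsilon)\geq 1-\epsilon$. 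For the modulus conclusion, given any $\eta>0$ choose $k_0$ with $\eta_{k_0}<\eta$; the monotonicity of $\delta\mapsto w_{[0,T],\mathbb{S}}(u,\delta)$ then gives $w_{[0,T],\mathbb{S}}(u,\delta)\leq\eta_{k_0}<\eta$ for every $u\in A_\epsilon$ and every $\delta\leq\delta_{k_0}$. Thus $\limsup_{\delta\to 0}\sup_{u\in A_\epsilon}w_{[0,T],\mathbb{S}}(u,\delta)\leq\eta_{k_0}$ for every $k_0$, which yields the required limit of zero.

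The principal obstacle is the intermediate modulus claim. The Aldous condition controls increments at a single stopping time, whereas the complement of the "good partition" event is existential over a random number of $\tau^n_k$'s, so one must simultaneously bound that number with high probability and the probability that any individual $\tau^n_k$ witnesses a fast $\eta$-increment. Once this bookkeeping is in place, the diagonal construction of $A_\epsilon$ and the verification of its two required properties are routine.
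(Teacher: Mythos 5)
The paper never proves this lemma --- it is stated as a known result borrowed from Aldous \cite{ald} and M\'etivier \cite{metivier} --- so there is no in-paper argument to compare against; your proposal reconstructs the standard proof. Your second half, the diagonal construction of $A_\epsilon$ from the intermediate modulus estimate, is complete and correct: the union bound gives the (in fact stronger, uniform-in-$n$) probability bound, and the monotonicity of $\delta\mapsto w_{[0,T],\mathbb{S}}(u,\delta)$ (the family of admissible partitions grows as $\delta$ decreases) yields the modulus limit. The intermediate claim is indeed the classical Aldous tightness criterion, and invoking it from \cite{ald} or \cite{metivier}, as you allow yourself to do, is legitimate and exactly parallels what the paper does with the entire lemma.

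If, however, you intend the sketch of the intermediate claim to stand as a proof, there is a genuine gap at the decisive point. On the bad event you must bound the probability that, for some $k$, the process leaves the $\eta$-ball around $X_n(\tau^n_k)$ within a random time shorter than $\delta$, i.e.\ that $\rho(X_n(\tau^n_k+\theta),X_n(\tau^n_k))>\eta$ for \emph{some} $\theta\in[0,\delta)$. The Aldous condition as stated only bounds, for each \emph{fixed deterministic} lag $\theta\leq\delta$, the probability $P\{\rho(X_n(\tau+\theta),X_n(\tau))\geq\eta\}$, uniformly over stopping times $\tau$; it does not directly bound the exit-time event, which is a supremum over $\theta$. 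Equivalently, your bad event is an increment between the two stopping times $\tau^n_k\leq\tau^n_{k+1}\leq\tau^n_k+\delta$, and upgrading the one-stopping-time/deterministic-lag condition to the two-stopping-time form is precisely the nontrivial content of Aldous's theorem (handled there by an averaging argument over the deterministic lag, in combination with the bound on the number of the $\tau^n_k$'s, which is the only obstacle you name). So either quote the modulus estimate from the cited references, as the paper itself effectively does, or supply that step; as written, ``the probability of such a fast increment at a stopping time is controlled by the Aldous condition applied to $\tau^n_k$'' is an assertion, not a proof.
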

\noindent
We will use the following lemma given in \cite{metivier} and \cite{motyl2013stochastic}.
\begin{lemma}
\label{ald}
Let $(E,\|\cdot\|_{E})$ be a separable Banach space and let $(X_n)_{n\in\mathbb{N}}$ be a sequence of $E$-valued random variables. Assume that for every sequence $(\tau_n)_{n\in\mathbb{N}}$ of $F$-stopping times with $\tau_n\leq T$ and for every $n\in\mathbb{N}$ and $\theta\geq 0$  the following condition holds
\begin{equation}
\label{sc5}
\mathbb{E}[\|X_n(\tau_n+\theta)-X_n(\tau_n)\|_{E}^\alpha]\leq C\theta^{\beta},
\end{equation}
for some $\alpha,\beta>0$ and some constant $C>0$. Then the sequence $(X_n)_{n\in\mathbb{N}}$ satisfies the Aldous condition in the space $E$.
\end{lemma}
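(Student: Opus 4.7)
The plan is to apply Chebyshev's (Markov's) inequality directly to the moment bound \eqref{sc5}, and then choose $\delta$ as an explicit function of $\epsilon$ and $\eta$ so that the resulting estimate uniform in $n$ and $\theta\in[0,\delta]$ is below $\epsilon$.

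More concretely, fix $\epsilon>0$ and $\eta>0$. For any sequence $(\tau_n)$ of $F$-stopping times with $\tau_n\leq T$ and any $\theta\geq 0$, Markov's inequality applied to the non-negative random variable $\|X_n(\tau_n+\theta)-X_n(\tau_n)\|_{E}^{\alpha}$ yields
\begin{equation*}
P\bigl\{\|X_n(\tau_n+\theta)-X_n(\tau_n)\|_{E}\geq\eta\bigr\}
\leq \frac{\mathbb{E}\bigl[\|X_n(\tau_n+\theta)-X_n(\tau_n)\|_{E}^{\alpha}\bigr]}{\eta^{\alpha}}
\leq \frac{C\,\theta^{\beta}}{\eta^{\alpha}},
\end{equation*}
where the last step uses the hypothesis \eqref{sc5}. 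The right-hand side is independent of $n$ and is monotone increasing in $\theta$.

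It then suffices to choose $\delta>0$ so that $C\delta^{\beta}/\eta^{\alpha}\leq\epsilon$; the explicit choice $\delta=\bigl(\epsilon\eta^{\alpha}/C\bigr)^{1/\beta}$ works. For this $\delta$, for every $n\in\mathbb{N}$ and every $\theta\in[0,\delta]$ one has
\begin{equation*}
P\bigl\{\|X_n(\tau_n+\theta)-X_n(\tau_n)\|_{E}\geq\eta\bigr\}
\leq \frac{C\,\delta^{\beta}}{\eta^{\alpha}}=\epsilon,
\end{equation*}
and taking the supremum over $n$ and over $\theta\in[0,\delta]$ yields precisely the Aldous condition in $E$.

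There is essentially no main obstacle in this lemma: the content lies entirely in the hypothesis \eqref{sc5}, and the proof is a one-line Markov inequality together with an explicit calibration of $\delta$. The only point worth checking carefully is that $\tau_n+\theta$ might exceed $T$, but this is harmless since the Aldous condition as stated allows the processes to be extended (for instance, by setting $X_n(t):=X_n(T)$ for $t>T$) and the moment bound is assumed for all $\theta\geq 0$, so the computation above applies verbatim.
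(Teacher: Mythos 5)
Your proof is correct: the Markov (Chebyshev) inequality applied to $\|X_n(\tau_n+\theta)-X_n(\tau_n)\|_{E}^{\alpha}$ together with the hypothesis \eqref{sc5} gives a bound $C\theta^{\beta}/\eta^{\alpha}$ uniform in $n$, and the explicit choice $\delta=(\epsilon\eta^{\alpha}/C)^{1/\beta}$ yields the Aldous condition. The paper itself states this lemma without proof, as a known result borrowed from Aldous and Metivier (and used in Motyl's work), and the argument you give is exactly the standard one; your remark about extending the process past $T$ (or equivalently truncating $\tau_n+\theta$ at $T$) is the right way to handle the only minor technical point.
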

We use the tightness condition for the Prokhorov-Varadarajan theorem which states that a sequence of measures $(\tilde{\mathbb{P}}^n)_{n\in\mathbb{N}}$ is tight on a topological space $E$ if for every $\epsilon>0$ there exists a compact set $K_\epsilon\subset E$ such that $\sup_n \tilde{\mathbb{P}}^n(E\setminus K_{\epsilon})\leq\epsilon$. Hence the tightness of measure in $\mathcal{Z}$ is given by the following theorem.
\begin{theorem}
\label{sdcthm1}
Let  $(X_n)_{n\in\mathbb{N}}$ be a sequence of c\`adl\`ag $F$-adapted $\mathbb{H}^{-1}(\mathcal{O})$-valued processes such that
\begin{itemize}
\item[(a)] there exists a positive constant $C_1$ such that
\begin{equation*}
\sup_{n\in\mathbb{N}}\mathbb{E}[\sup_{s\in[0,T]}\|X_n(s)\|_{\Ll}]\leq C_1,
\end{equation*}
\item[(b)] there exists a positive constant $C_2$ such that
\begin{equation*}
\sup_{n\in\mathbb{N}}\mathbb{E}[\int_0^T \|X_n(s)\|_{\h}^2 ds]\leq C_2,
\end{equation*}
\item[(c)] $(X_n)_{n\in\mathbb{N}}$ satisfies the Aldous condition in $\mathbb{H}^{-1}(\mathcal{O})$.
\end{itemize}
Let $\tilde{\mathbb{P}}^n$ be the law of $X_n$ on $\mathcal{Z}$. Then for every $\epsilon>0$ there exists a compact subset $K_\epsilon$ of $\mathcal{Z}$ such that
\begin{equation}
\tilde{\mathbb{P}}^n(K_\epsilon)\geq 1-\epsilon,
\end{equation}
and the sequence of measures $\{\tilde{\mathbb{P}}^n,n\in\mathbb{N}\}$ is said to be tight on $(\mathcal{Z},\tau)$.
\end{theorem}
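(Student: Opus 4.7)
The plan is to apply the compactness criterion of Theorem \ref{cpt} together with Chebyshev's inequality and Lemma \ref{aldt} in order to produce, for each $\epsilon > 0$, a $\tau$-compact subset $K_\epsilon \subset \mathcal{Z}$ carrying $P_n$-mass at least $1-\epsilon$ uniformly in $n$. The strategy is to handle each of the three conditions (a), (b), (c) of Theorem \ref{cpt} separately and then intersect three ``good sets'' of probability $\geq 1-\epsilon/3$.

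Fix $\epsilon > 0$. First I would use hypothesis (a) and Chebyshev's inequality: setting $R_1 := 3C_1/\epsilon$ and
\begin{equation*}
B_1 := \Bigl\{u \in \mathcal{Z}:\sup_{s\in[0,T]}\|u(s)\|_{\Ll} \leq R_1\Bigr\},
\end{equation*}
one gets $P_n(B_1^c) \leq \E[\sup_s\|X_n(s)\|_{\Ll}]/R_1 \leq \epsilon/3$. Similarly, using hypothesis (b) I would set $R_2 := 3C_2/\epsilon$ and
\begin{equation*}
B_2 := \Bigl\{u \in \mathcal{Z}:\int_0^T\|u(s)\|_{\h}^2\,ds \leq R_2\Bigr\},
\end{equation*}
giving $P_n(B_2^c) \leq \epsilon/3$. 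Finally, hypothesis (c) allows me to invoke Lemma \ref{aldt} with parameter $\epsilon/3$ to obtain a set $A_{\epsilon/3} \subset \D(0,T;\mathbb{H}^{-1}(\mathcal{O}))$ satisfying $\sup_n P_n(A_{\epsilon/3}^c) \leq \epsilon/3$ together with $\lim_{\delta\to 0}\sup_{u \in A_{\epsilon/3}} w_{[0,T],\mathbb{H}^{-1}(\mathcal{O})}(u,\delta) = 0$.

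I would then define $K_\epsilon$ to be the $\tau$-closure of $B_1 \cap B_2 \cap A_{\epsilon/3}$ in $\mathcal{Z}$. A union bound immediately yields
\begin{equation*}
P_n(K_\epsilon) \geq 1 - P_n(B_1^c) - P_n(B_2^c) - P_n(A_{\epsilon/3}^c) \geq 1 - \epsilon,
\end{equation*}
uniformly in $n$, which is the tightness conclusion. It remains to check that $K_\epsilon$ is $\tau$-compact, and for this I verify the three conditions of Theorem \ref{cpt} on $K_\epsilon$: the pointwise $\Ll$-bound is preserved by weak-star limits in $L^\infty(0,T;\Ll)$, the $L^2(0,T;\h)$-bound is preserved by weak limits (lower semicontinuity of the norm), and the Skorokhod modulus $w_{[0,T],\mathbb{H}^{-1}}(\cdot,\delta)$ is lower semicontinuous with respect to the extended Skorokhod topology $\tau_1$, so the vanishing-modulus property from $A_{\epsilon/3}$ persists under closure.

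The main obstacle is this last closure argument: one must confirm that each of the three defining properties survives taking limits in $\tau = \tau_1 \vee \tau_2 \vee \tau_3 \vee \tau_4$, because $\mathcal{Z}$ is equipped with the supremum of four quite different topologies. Once that lower-semicontinuity bookkeeping is carried out, the rest is a routine combination of Chebyshev estimates, Lemma \ref{aldt}, and Theorem \ref{cpt}, and the proof is complete.
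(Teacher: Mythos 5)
Your proposal is correct and follows essentially the same route as the paper: Chebyshev bounds from hypotheses (a) and (b), Lemma \ref{aldt} from the Aldous condition (c), each at level $\epsilon/3$, and then taking $K_\epsilon$ to be the $\tau$-closure of the intersection and invoking Theorem \ref{cpt}. The only difference is cosmetic: the paper applies Theorem \ref{cpt} as a relative-compactness criterion (so the closure is automatically compact), whereas you re-verify the three conditions on the closure by lower semicontinuity, which is a harmless extra check.
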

\noindent For proof see Corollary 1, \cite{motyl2013stochastic}.
\subsection{Martingale Problem}
We now consider the stochastic tidal dynamics equation with L\'evy forcing as defined in Section \ref{setting} with initial value control as
\begin{align}
&du(t) + [Au(t)+B(u(t))+g\nabla\z(t)]dt
=f(t)dt+\s(t,u(t))dW(t)\notag\\& \quad+\int_Z H(u(t-),z) \n,  \label{sc1}\\
& d\z(t)+Div(hu(t))dt=0, \label{sc2}\\
&u(0)=u_0+U,\qquad \z(0)=\z_0,\label{sc3} 
\end{align}
where $f\in L^2(\Omega;L^2(0,T;\Ll(\mathcal{O})))$, $u_0,U\in  L^2(\Omega;\Ll(\mathcal{O}))$ and $\z_0\in  L^2(\Omega;L^2(\mathcal{O}))$.  We assume that $\s$ and $H$ satisfy Assumption \ref{Hyp}.
\begin{definition} \label{defi.mart}
A martingale solution of \eqref{sc1}-\eqref{sc3} is a system \\
$(\overline{\Omega},\overline{\mathcal{F}},\overline{F},\overline{P},\overline{u},\overline{z},\overline{U},\overline{N},\overline{W})$, where
\begin{itemize}
\item $(\overline{\Omega},\overline{\mathcal{F}},\overline{F},\overline{P})$ is a filtered probability space with a filtration $\overline{F}=\{\overline{\mathcal{F}}_t\}_{t\geq 0}$,
\item $\overline{N}$ is a time homogeneous Poisson random measure over $(\overline{\Omega},\overline{\mathcal{F}},\overline{F},\overline{P})$ with the intensity measure $\lambda$,
\item $\overline{W}$ is a cylindrical Wiener process over $(\overline{\Omega},\overline{\mathcal{F}},\overline{F},\overline{P})$,
\item $\overline{U}$ is measurable with $\overline{P}$- a.e. $\omega \in \overline{\Omega},$ $\overline{U}(\omega)\in \Ll(\mathcal{O})$,\item \gr{$\overline{u}, \overline{z}$ are progressively measurable processes with $\overline{P}$- a.e. $\omega \in \overline{\Omega},$ the paths
\begin{align*}
&\overline{u}(\cdot, \omega)\in D([0,T];\mathbb{H}^{-1}(\mathcal{O}))\cap D([0,T];\Ll_w(\mathcal{O})) \cap L^2_w(0,T;\h(\mathcal{O}))\cap L^2(0,T;\Ll(\mathcal{O})),\,\,\\
&\overline{z}(\cdot, \omega)\in L^2(0,T;L^2(\mathcal{O})) \cap C([0,T];H^{-1}(\mathcal{O})),
\end{align*}
}
 such that for all $t\in [0, T]$, for all $v\in\h(\mathcal{O})$ and for all $w\in L^2(\mathcal{O})$, the following identities hold $\overline{P}$-a.s.
\begin{IEEEeqnarray}{llr}
(&\overline{u}(t),v)_{\Ll}+\int_0^t (A\ou(s),v)_{\Ll}ds+\int_0^t(B(\ou(s)),v)_{\Ll}ds+\int_0^t\gr{\langle g\nabla\overline{z}(s) ,v \rangle} ds&\nonumber\\
&=(u_0,v)_{\Ll}+(\overline{U},v)_{\Ll}+\int_0^t(f(s),v)_{\Ll}ds\nonumber\\
&\quad+\int_0^t(\s(s,\ou(s))d \overline{W}(s),v)_{\Ll}
+\int_0^t\int_Z (H(\ou(s-),z),v)_{\Ll} \tilde{\overline{N}}(ds,dz) ,\\
&(\overline{z}(t),w)_{L^2}+\int_0^t (Div(h\ou(s)),w)_{L^2} ds=(z_0,w)_{L^2},
\end{IEEEeqnarray}
\end{itemize}
\gr{where $\langle \cdot,\cdot \rangle$ denotes the duality pairing between $\mathbb{H}^{1}_{0}(\mathcal{O})$ and $\mathbb{H}^{-1}(\mathcal{O}).$}
\end{definition}
Equivalent infinite dimensional martingale formulations are available in the literature [e.g. see Theorem 9 of Sritharan \cite{sritharan}]. Similar formulations, in the finite dimensional case, are known due to Stroock and Varadhan \cite{var} and Viot \cite{viot}. Equivalence between infinite dimensional version of Stroock-Varadhan martingale formulations and weak formulations as in the spirit of Definition \ref{defi.mart} can be found in Theorem 10 of Sritharan \cite{sritharan}.  
\subsection{Existence of Martingale Solution}
We will prove the existence of a martingale solution using the Galerkin approximations as explained in Section \ref{estimate}. We write
\begin{IEEEeqnarray*}{lrl}
\label{sc4}
u^n(t)&=&u^n_0+U^n-\int_0^t (Au^n(s)+\gr{B(u^n(s))}+g\nabla \z^n(s)-\gr{f(s))}ds\\
&&+\int_0^t \s^n(s,u^n(s))dW^n(s)+\int_0^t \int_Z H^n(u^n(s-),z)\ns .\yesnumber\\
\z^n(t)&=&\z^n_0 -\int_0^tDiv(hu^n(s))ds .\yesnumber\label{sc41}
\end{IEEEeqnarray*}
For each $n\in\mathbb{N}$, we use the measures $\mathcal{L}(u^n)$ and $\mathcal{L}(\z^n)$ defined on $(\mathcal{Z},\tau)$ and \gr{$L^2(0,T;L^2(\mathcal{O}))$} respectively by the solution $(u^n, \z^n)$ of the Galerkin equations \eqref{sc4}-\eqref{sc41}.
\begin{lemma}
\label{tight}
The set of measures $\{\mathcal{L}(u^n),n\in\mathbb{N}\}$ is tight on $(\mathcal{Z},\tau).$
\end{lemma}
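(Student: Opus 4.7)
The plan is to invoke Theorem \ref{sdcthm1} applied to the sequence $(u^n)$ of Galerkin approximations viewed as c\`adl\`ag $\mathbb{H}^{-1}(\mathcal{O})$-valued processes (which is legitimate since $\Ll_n \subset \Ll \subset \mathbb{H}^{-1}$). Conditions (a) and (b) of that theorem follow at once from the a priori bound \eqref{eq15} in Proposition \ref{prop}: by Jensen's inequality,
\[
\sup_{n}\E\bigl[\sup_{s\in[0,T]}\|u^n(s)\|_{\Ll}\bigr] \le \sup_{n}\bigl(\E[\sup_{s\in[0,T]}\|u^n(s)\|_{\Ll}^2]\bigr)^{1/2} \le C_{2(2)}^{1/2},
\]
and $\sup_{n}\E[\int_0^T \|u^n(s)\|_{\h}^2\,ds] \le C_{2(2)}/(2\alpha)$.

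The substantive step is the Aldous condition (c) in $\mathbb{H}^{-1}(\mathcal{O})$. I will use Lemma \ref{ald} with $\alpha = 1$, $\beta = 1/2$: for every sequence of $F$-stopping times $(\tau_n)$ with $\tau_n \le T$ and every $\theta\in[0,T-\tau_n]$, establish
\[
\E\bigl[\|u^n(\tau_n+\theta)-u^n(\tau_n)\|_{\mathbb{H}^{-1}}\bigr] \le C\,\theta^{1/2}.
\]
From the Galerkin identity \eqref{sc4} the increment splits as $u^n(\tau_n+\theta)-u^n(\tau_n)=\sum_{j=1}^{6} I_j$, namely the three drift integrals of $-Au^n$, $-B(u^n)$, $-g\nabla\z^n$, the forcing integral of $f$, the Wiener stochastic integral of $\s^n$, and the compensated Poisson stochastic integral of $H^n$. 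The linear terms $I_1, I_3$ are estimated using the continuity of $A\colon\h\to\mathbb{H}^{-1}$ and of $\nabla\colon L^2\to\mathbb{H}^{-1}$, followed by Cauchy-Schwarz in time and \eqref{eq15}, each yielding a bound of order $\theta^{1/2}$; the forcing integral $I_4$ is handled analogously using $f\in L^2(0,T;\Ll)$. For the stochastic pieces $I_5$ and $I_6$, the It\^o isometry for the Wiener integral and the isometry for the compensated Poisson integral combined with assumption H.2 reduce the bound to $K(1+\E[\sup_s\|u^n(s)\|_{\Ll}^2])\,\theta$, which by Jensen gives the desired $\theta^{1/2}$ in the $\Ll$ norm (hence in $\mathbb{H}^{-1}$).

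The main obstacle is the nonlinear term $I_2=-\int_{\tau_n}^{\tau_n+\theta}B(u^n(s))\,ds$, which I would handle via Ladyzhenskaya's inequality \eqref{eq13}. Since $\Ll\hookrightarrow\mathbb{H}^{-1}$ continuously, estimate \eqref{e2} combined with \eqref{eq13} (so that $\|u^n\|_{\mathbb{L}^4}^2 \le \sqrt{2}\|u^n\|_{\Ll}\|u^n\|_{\h}$) yields
\[
\|B(u^n(s))\|_{\Ll} \le C_2\bigl(\sqrt{2}\|u^n(s)\|_{\Ll}\|u^n(s)\|_{\h}+\|w^0(s)\|_{\mathbb{L}^4}^2\bigr).
\]
Applying Cauchy-Schwarz in time and then in $\omega$,
\[
\E\Bigl\|\int_{\tau_n}^{\tau_n+\theta}\!B(u^n(s))\,ds\Bigr\|_{\Ll} \le C\theta^{1/2}\bigl(\E[\sup_{s}\|u^n(s)\|_{\Ll}^2]\bigr)^{1/2}\bigl(\E[\|u^n\|_{L^2(0,T;\h)}^2]\bigr)^{1/2} + C\theta^{1/2}\|w^0\|_{L^4(0,T;\mathbb{L}^4)}^2,
\]
which is of order $\theta^{1/2}$ by \eqref{eq15} and the standing hypothesis on $w^0$. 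Summing the six contributions verifies \eqref{sc5} with $\alpha=1,\beta=1/2$; Lemma \ref{ald} then yields the Aldous condition, and Theorem \ref{sdcthm1} delivers tightness of $\{\mathcal{L}(u^n)\}$ on $(\mathcal{Z},\tau)$.
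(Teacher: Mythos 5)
Your proposal is correct and follows essentially the same route as the paper: tightness via Theorem \ref{sdcthm1} with conditions (a)--(b) from Proposition \ref{prop}, and the Aldous condition via Lemma \ref{ald} after splitting the increment into the drift, pressure, forcing, Wiener and compensated Poisson pieces, with the nonlinear term handled by \eqref{e2} together with the Ladyzhenskaya inequality \eqref{eq13}. The only cosmetic difference is that you convert the stochastic-integral bounds to $\alpha=1,\beta=1/2$ via Jensen, whereas the paper verifies \eqref{sc5} for those terms with $\alpha=2,\beta=1$; both are admissible in Lemma \ref{ald}.
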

\begin{proof}
We will prove the tightness of the measures using the tightness criterion given in Theorem \ref{sdcthm1}. \gr{From Proposition \ref{prop}, conditions (a) and (b) of Theorem \ref{sdcthm1} are satisfied. Now we need to verify that condition (c) of Theorem \ref{sdcthm1} is also satisfied. Using Lemma \ref{ald} we will show that sequence $(u^n)_{n\in\mathbb{N}}$ satisfy Aldous condition in the space $\mathbb{H}^{-1}(\mathcal{O}).$} Let $\theta>0$. Let $(\tau_n)_{n\in\mathbb{N}}$ be a sequence of stopping times where $0\leq\tau_n\leq \tau_n+\theta \leq T$. \gr{Let us recall \eqref{sc4} as:}
\begin{IEEEeqnarray*}{lrl}
u^n(t)=u^n_0+U^n-\int_0^t Au^n(s)ds-\int_0^t B(u^n(s)) ds-\int_0^t g\nabla\z^n(s) ds\\
+\int_0^t f(s)ds+\int_0^t \s^n(s,u^n(s))dW^n(s)+\int_0^t \int_Z H^n(u^n(s-),z)\ns\\
=\gr{J^n_1+J^n_2}+J^n_3(t)+J^n_4(t)+J^n_5(t)+J^n_6(t)+J^n_7(t)+J^n_8(t),\qquad t\in [0,T].
\end{IEEEeqnarray*}
Now we will show that \eqref{sc5} holds for each $J^n_i,\; i\in\{1,2,\ldots,8\}$. \newline
First note, since the terms $J^n_1$ and $J_2^n$ are independent of time, clearly \eqref{sc5} is satisfied for any $\alpha,\beta>0.$
\newline
Now consider the term $J^n_3(t)$. Since $A:\h(\mathcal{O})\rightarrow\mathbb{H}^{-1}(\mathcal{O})$, therefore for all $v\in\h(\mathcal{O})$
\begin{equation*}
\langle Au,v\rangle =(Au,v)_{\Ll}
\leq C_1\|u\|_{\h}\|v\|_{\h}.
\end{equation*}
Hence, using the above inequality 
\begin{equation} \label{eq.A}
\| Au\|_{\mathbb{H}^{-1}}\leq C_1\|u\|_{\h}.
\end{equation}
\gr{Therefore by using \eqref{eq.A}}, H\"older's inequality and \eqref{eq15}, $J^n_3$ can be estimated as
\begin{align*}
\mathbb{E}[\| &J^n_3(\tau_n+\theta)-J^n_3(\tau_n)\|_{\mathbb{H}^{-1}}]=\mathbb{E}[\|\int_{\tau_n}^{\tau_n+\theta}Au^n(s)ds\|_{\mathbb{H}^{-1}}]\\
&\quad\leq C_1\mathbb{E}[\int_{\tau_n}^{\tau_n+\theta}\| u^n(s)\|_{\h}ds] \leq C_1\mathbb{E}[\theta^{1/2}\left(\int_0^T \|u^n(s)\|^2_{\h} ds\right)^{1/2}] \\
&\quad\gr{\leq C_1 \theta^{1/2}\Big[\mathbb{E}\left(\int_0^T \|u^n(s)\|^2_{\h} ds\right)\Big]^{1/2}}  \leq c_2\theta^{1/2}.
\end{align*}
Thus $J^n_3$ satisfies $\eqref{sc5}$ with $\alpha=1$ and $\beta=\frac{1}{2}$.\\
We next consider the term $J^n_4(t)$.
\gr{In Proposition \ref{moment} for $p=4$ we note that $w^0\in L^{8}(\Omega; L^{8}\\(0,T;\mathbb{L}^8(\mathcal{O})))
\subset  L^4(\Omega; L^4(0,T;\mathbb{L}^4(\mathcal{O}))).$  
 Again by Proposition \ref{moment}, for $p=4$, we obtain \\
 $\mathbb{E}[\int_0^T \|u^n(s)\|_{\mathbb{L}^2}^2 \|u^n(s)\|_{\h}^2ds] \leq C_{1(4)}$. 
By the estimate \eqref{eq13} we have $\mathbb{E}[\int_0^T \|u^n(s)\|_{\mathbb{L}^4}^4 ds] \leq C_{1(4)}$.
Using the embedding $\h(\mathcal{O})\hookrightarrow\mathbb{L}^4(\mathcal{O})\hookrightarrow\Ll(\mathcal{O})
\hookrightarrow\mathbb{H}^{-1}(\mathcal{O})$, property of the operator B in \eqref{e2}, Minskowskii and H\"older's inequalities and Proposition \ref{moment} (for $p=4$) we estimate $J^n_4$ as
\begin{align*}\label{lrl}
&\mathbb{E}[\| J^n_4(\tau_n+\theta)-J^n_4(\tau_n)\|_{\mathbb{H}^{-1}}]
=\mathbb{E}[\|\int_{\tau_n}^{\tau_n+\theta}B(u^n(s))ds\|_{\mathbb{H}^{-1}}]\\
&\quad \leq \mathbb{E}[\int_{\tau_n}^{\tau_n+\theta}\|B(u^n(s))\|_{\mathbb{H}^{-1}}ds]
 \leq c\mathbb{E}[\int_{\tau_n}^{\tau_n+\theta}\|B(u^n(s))\|_{\Ll}ds]\\
 &\quad \leq cC_2\mathbb{E}[\int_{\tau_n}^{\tau_n+\theta}\|u^n(s)+w^0(s)\|_{\mathbb{L}^4}^2 ds]\\
& \quad \leq 2cC_2 \mathbb{E}[\int_{\tau_n}^{\tau_n+\theta} \left(\|u^n(s)\|_{\mathbb{L}^4}^2 + \|w^0(s)\|_{\mathbb{L}^4}^2\right)ds]\\
&\quad\leq c_3\theta^{1/2}  \left(\mathbb{E}\left[\int_{\tau_n}^{\tau_n+\theta} \|u^n(s)\|_{\mathbb{L}^4}^4 ds\right]\right)^{1/2} + c_3\theta^{1/2} 
\left(\mathbb{E}\left[\int_{\tau_n}^{\tau_n+\theta} \|w^0(s)\|_{\mathbb{L}^4}^4 ds\right]\right)^{1/2}\\
&\quad\leq c_3\theta^{1/2}  \left(\mathbb{E}\left[\int_0^T \|u^n(s)\|_{\mathbb{L}^4}^4 ds\right]\right)^{1/2} 
+ c_3\theta^{1/2} \left(\mathbb{E}\left[\int_0^T \|w^0(s)\|_{\mathbb{L}^4}^4 ds\right]\right)^{1/2}\\
&\quad\leq c_4\theta^{1/2}.
\end{align*}
Thus $J^n_4$ satisfies $\eqref{sc5}$ with $\alpha=1$ and $\beta=\frac{1}{2}$.\\
}
Next consider the term $J^n_5(t)$.
Consider the operator $C:L^2(\mathcal{O})\rightarrow\mathbb{H}^{-1}(\mathcal{O})$ defined by $C(\z)=g\nabla \z.$
For all $v\in\h(\mathcal{O}),$ we have $\gr{|\langle C(\z),v\rangle|
= |-g(\z,Div(v))_{L^2}}|\leq g\|\z\|_{L^2}\|v\|_{\h}.$
Hence
\begin{equation} \label{eq.C}
\| C(\z)\|_{\mathbb{H}^{-1}}\leq g\|\z\|_{L^2}.
\end{equation}
\gr{So by using \eqref{eq.C}, H\"older's inequality and \eqref{eq15}, we have
\begin{align*}
\mathbb{E}[\| &J^n_5(\tau_n+\theta)-J^n_5(\tau_n)\|_{\mathbb{H}^{-1}}]=\mathbb{E}[\|\int_{\tau_n}^{\tau_n+\theta} g\nabla \z^n(s)ds\|_{\mathbb{H}^{-1}}]\\
&\quad\leq g\mathbb{E}[\int_{\tau_n}^{\tau_n+\theta}\|\z^n(s)\|_{L^2}ds] \leq g\theta^{1/2} \Big[\mathbb{E}\left(\int_{\tau_n}^{\tau_n+\theta}\|\z^n(s)\|_{L^2}^2 ds\right)\Big]^{1/2}\\
& \quad\leq g\theta^{1/2} \Big[\mathbb{E}\left(\int_{0}^{T}\|\z^n(s)\|_{L^2}^2 ds\right)\Big]^{1/2}\leq g\theta^{1/2}\Big[\mathbb{E}\left(T\sup_{0\leq s\leq T}\|\z^n(s)\|_{L^2}^2\right)\Big]^{1/2}\\
&\quad \leq g\theta^{1/2}T^{1/2}\left(\mathbb{E}\left[\sup_{0\leq s\leq T}\|\z^n(s)\|_{L^2}^2\right]\right)^{1/2}\leq  c_5\theta^{1/2}.
\end{align*}}
Thus $J^n_5$ satisfies $\eqref{sc5}$ with $\alpha=1$ and $\beta=\frac{1}{2}$.\\
We next consider the term $J^n_6(t)$.
Since $\Ll(\mathcal{O})\hookrightarrow\mathbb{H}^{-1}(\mathcal{O})$ and \gr{$f\in L^2(\Omega;L^2(0,T;\Ll(\mathcal{O})))$, by H\"older inequality, we have
\begin{align*}
&\mathbb{E}[\| J^n_6(\tau_n+\theta)-J^n_6(\tau_n)\|_{\mathbb{H}^{-1}}]=\mathbb{E}[\|\int_{\tau_n}^{\tau_n+\theta} f(s)ds\|_{\mathbb{H}^{-1}}]\\
&\quad\leq \mathbb{E}[\int_{\tau_n}^{\tau_n+\theta}\|f(s)\|_{\mathbb{H}^{-1}}ds]
\leq c\mathbb{E}[\int_{\tau_n}^{\tau_n+\theta}\|f(s)\|_{\Ll}ds]\\
& \quad\leq c\theta^{1/2}\left(\mathbb{E}\left[\int_{\tau_n}^{\tau_n+\theta}\|f(s)\|_{\Ll}^2 ds\right]\right)^{1/2}\\
&\quad\leq c\theta^{1/2}\left(\mathbb{E}\left[\int_0^T\|f(s)\|_{\Ll}^2 ds\right]\right)^{1/2}\leq c_6\theta^{1/2}.
\end{align*}}
Thus $J^n_6$ satisfies $\eqref{sc5}$ with $\alpha=1$ and $\beta=\frac{1}{2}$.\\
Now we consider the term $J^n_7(t)$.
\gr{Using the embedding $\Ll(\mathcal{O})\hookrightarrow\mathbb{H}^{-1}(\mathcal{O}),$  It\^o isometry, \eqref{eq.sig.n}, Assumption \ref{Hyp} and inequality \eqref{eq15},} we obtain
\begin{align*}
&\mathbb{E}[\| J^n_7(\tau_n+\theta)-J^n_7(\tau_n)\|^2_{\mathbb{H}^{-1}}]=\mathbb{E}[\|\int_{\tau_n}^{\tau_n+\theta} \s^n(s,u^n(s)) dW^n(s)\|^2_{\mathbb{H}^{-1}}]\\
&\quad\leq c\mathbb{E}[\|\int_{\tau_n}^{\tau_n+\theta} \s^n(s,u^n(s)) dW^n(s)\|_{\Ll}^2]=c \mathbb{E}[\int_{\tau_n}^{\tau_n+\theta} \|\s^n(s,u^n(s))\|_{L_Q(\mathbb{L}^2,\Ll)}^2 ds]\\
& \quad \gr{\leq c \mathbb{E}[\int_{\tau_n}^{\tau_n+\theta} \|\s(s,u^n(s))\|_{L_Q(\mathbb{L}^2,\Ll)}^2 ds]}
\leq cK \mathbb{E}[\int_{\tau_n}^{\tau_n+\theta} (1+\| u^n(s)\|_{\Ll}^2) ds]\\& \quad \leq  cK\theta(1+\mathbb{E}[\sup_{0\leq s\leq T}\|u^n(s)\|_{\Ll}^2])\leq c_6\theta.
\end{align*}
Thus $J^n_7$ satisfies $\eqref{sc5}$ with $\alpha=2$ and $\beta=1$.\\
We finally consider the term $J^n_8(t)$.
\gr{Using the embedding $\Ll(\mathcal{O})\hookrightarrow\mathbb{H}^{-1}(\mathcal{O}),$ L\'evy-It\^o isometry,  \eqref{eq.sig.n1}, Assumption \ref{Hyp} and inequality \eqref{eq15},} we obtain
\gr{\begin{IEEEeqnarray*}{lrl}
\mathbb{E}[\| J^n_8(\tau_n+\theta)-J^n_8(\tau_n)\|^2_{\mathbb{H}^{-1}}]&=&\mathbb{E}[\|\int_{\tau_n}^{\tau_n+\theta} \int_Z H^n(u^n(s-),z)\ns\|_{\mathbb{H}^{-1}}^2]\\
&\leq & c\mathbb{E}[\|\int_{\tau_n}^{\tau_n+\theta}\int_Z H^n(u^n(s-),z)\ns\|_{\Ll}^2]\\
&=& c\mathbb{E}[\int_{\tau_n}^{\tau_n+\theta}\int_Z \| H^n(u^n(s),z)\|_{\Ll}^2\lambda(dz)ds]\\
& \leq & c\mathbb{E}[\int_{\tau_n}^{\tau_n+\theta}\int_Z \| H(u^n(s),z)\|_{\Ll}^2\lambda(dz)ds]\\
& \leq & cK\,\mathbb{E}[\int_{\tau_n}^{\tau_n+\theta} (1+\| u^n(s))\|_{\Ll}^2) ds]\\
& \leq & cK\,\theta\,(1+\mathbb{E}[\sup_{0\leq s\leq T}\|u^n(s)\|_{\Ll}^2])\leq c_6\theta.
\end{IEEEeqnarray*}}
Thus $J^n_8$ satisfies $\eqref{sc5}$ with $\alpha=2$ and $\beta=1$.\\
\gr{Finally combining estimates of each $J^n_i;i=1,\cdots,8$,
we have,
\begin{align*}
&\mathbb{E}[\| u^n(\tau_n+\theta)-u^n(\tau_n)\|_{\mathbb{H}^{-1}}] = \mathbb{E}\Big[\| \sum_{i=1}^{8}(J^n_i(\tau_n+\theta)-J^n_i(\tau_n)\|_{\mathbb{H}^{-1}}\Big]\\
& \leq  \sum_{i=1}^{8} \mathbb{E}\Big[\|(J^n_i(\tau_n+\theta)-J^n_i(\tau_n)\|_{\mathbb{H}^{-1}}\Big] =  \sum_{i=1}^{6} \mathbb{E}\Big[\|(J^n_i(\tau_n+\theta)-J^n_i(\tau_n)\|_{\mathbb{H}^{-1}}\Big]\\&\quad +  \sum_{i=7}^{8} \mathbb{E}\Big[\|(J^n_i(\tau_n+\theta)-J^n_i(\tau_n)\|_{\mathbb{H}^{-1}}\Big]\\& \leq C_1\theta^{1/2}+  \sum_{i=7}^{8} \Big[\mathbb{E}(\|(J^n_i(\tau_n+\theta)-J^n_i(\tau_n)\|^2_{\mathbb{H}^{-1}}\Big]^{1/2}\\& \leq C_1\theta^{1/2} + C_2\theta^{1/2}:= C\theta^{1/2}.
\end{align*}
Hence $u^n$ satisfies Aldous condition in the space $\mathbb{H}^{-1}(\mathcal{O})$ with $\alpha=1$ and $\beta=\frac{1}{2},$ which completes the proof.}
\end{proof}
\begin{lemma}
\label{tight2}
The set of measures $\{\mathcal{L}(\z^n),n\in\mathbb{N}\}$ is tight on $L^2(0,T;L^2(\mathcal{O}))\cap C([0,T]; H^{-1}(\mathcal{O})).$ 
\end{lemma}
\begin{proof}
First note that, due to lack of $H^1$ estimate for $\z^n$,  we can not apply Theorem \ref{sdcthm1}, and hence we may not be able prove tightness following the method of the previous Lemma. However, one could possibly consider more regular initial data and apply Proposition \ref{PropH1} to obtain $H^1$ estimate for $\z^n$ and then apply Theorem \ref{sdcthm1} to obtain tightness. But we do not proceed in this direction.
\par\noindent
To prove the tightness in $L^2(0,T;L^2(\mathcal{O}))$, we follow the classical argument of Chow and Khasminskii \cite{CK}. 
\par\noindent
By Proposition \ref{prop}, $\E[\sup_{0\leq t\leq T}\|\z^n(t)\|_{L^2}^2]\leq C_1$, hence using Fubini's theorem we have
\begin{equation*}
\E[\|\z^n\|_{L^2(0,T;L^2)}^2]\leq C_1T.
\end{equation*} 
By the Chebychev inequality, we see that for any $r>0$
\begin{equation*}
P(\|\z^n\|_{L^2(0,T;L^2)}>r)\leq \dfrac{\mathbb{E}[\|\z^n\|_{L^2(0,T;L^2)}^2]}{r^2}\leq \dfrac{C_1T}{r^2}.
\end{equation*}
Let $\epsilon>0$ be given. Let $R_1>0$ be such that $\frac{C_1T}{R_1^2}< \epsilon$. Then
$P(\|\z^n\|_{L^2(0,T;L^2)}>R_1)< \epsilon.$
Denote
\begin{equation*}
B_\epsilon := \{\z^n\in L^2(0,T;L^2(\mathcal{O})): \|\z^n\|_{L^2(0,T;L^2)}\leq R_1\}.
\end{equation*}
Then it is clear that $P(B_\epsilon)\geq 1-\epsilon$. Hence for every $\epsilon >0$, there exists a compact subset $B_\epsilon$ of $L^2(0,T;L^2(\mathcal{O}))$ such that $P(B_\epsilon)\geq 1-\epsilon$. Thus the set of measures $\{\mathcal{L}(\z^n),n\in\mathbb{N}\}$ is tight on $L^2(0,T;L^2(\mathcal{O}))$.
\par\noindent
To prove the tightness in $C([0,T]; H^{-1}(\mathcal{O}))$, we follow the method due to Flandoli and Gatarek \cite{FG} (see Theorem 3.1, Step 2). 
\par\noindent
We decompose $\z^n$ as
\begin{align}
\z^n(t)&=\z^n_0 -\int_0^tDiv(hu^n(s))ds\nonumber\\
&= J_9^n + J_{10}^n(t).
\end{align}
We already have $\E\|J_9^n\|_{L^2}^2 \leq C_1.$ 
Also, by Proposition \ref{prop} we have
\begin{align*}
&\E\|J_{10}^n(t)\|_{L^2(0,T;L^2)}^2 \leq \E\int_0^T \int_0^t\| Div(hu^n(s))\|_{L^2}^2 ds\, dt\\ 
&\leq 2\,T \E\int_0^T \big(\|\nabla h\|_{\mathbb L^\infty}^2 \|u^n(t)\|_{\mathbb L^2}^2 + \|h\|_{L^\infty}^2 \|\nabla u^n(t)\|_{\mathbb L^2}^2\big) dt\\
&\leq 2\,T^2 \|\nabla h\|_{\mathbb L^\infty}^2 \E[\sup_{0\leq t\leq T} \|u^n(t)\|_{\mathbb L^2}^2]  + 2\,T\,\|h\|_{L^\infty}^2\E\int_0^T \|\nabla u^n(t)\|_{\mathbb L^2}^2 dt\\
&\leq C_2.
\end{align*}
Thus we have $\E\|\z^n\|_{L^2(0,T;L^2)}^2\leq C_3$, and hence $\E\|\z^n\|_{W^{1,2}(0,T;L^2)}^2\leq C_4$, where for a generic Banach space $B$ and a real number $p\geq 1$, $W^{1,p}(0, T; B)$ denotes the space of all $v\in L^p(0, T; B)$ such that $\dfrac{dv}{dt}\in L^p(0, T; B)$. Since $W^{1,2}(0,T;L^2(\mathcal{O}))\subset W^{\alpha, 2}(0, T; L^2(\mathcal{O}))$ for all $\alpha\in (0,1)$, we infer $$\E\|\z^n\|_{W^{\alpha,2}(0,T;L^2)}^2\leq C_4.$$ 
\noindent
Since $L^2(\mathcal{O})$ is compactly embedded in $H^{-1}(\mathcal{O})$, we can apply Theorem 2.2 of Flandoli and Gatarek \cite{FG} to infer that for all real numbers $\alpha\in (\frac{1}{2},1)$, the space $W^{\alpha,2}(0,T;L^2(\mathcal{O}))$  is compactly embedded into $C([0, T]; H^{-1}(\mathcal{O}))$. Hence the set of measures $\{\mathcal{L}(\z^n),n\in\mathbb{N}\}$ is tight on $C([0, T]; H^{-1}(\mathcal{O}))$.
\end{proof}
\begin{lemma}
\label{tight3}
The set of measures $\{\mathcal{L}(U^n),n\in\mathbb{N}\}$ is tight on $\Ll(\mathcal{O})$.
\end{lemma}
\begin{proof}
Using the assumption $\E[\|U^n\|_{\Ll}^2]\leq c$,
 and using the Chebychev inequality, we see that for any $r>0$
\begin{equation*}
P(\|U^n\|_{\Ll}>r)\leq \dfrac{\mathbb{E}[\|U^n\|_{\Ll}^2]}{r^2}\leq \dfrac{c}{r^2}.
\end{equation*}
Let $\epsilon>0$ be given. Let $R_2>0$ be such that $\frac{c}{R_2^2}< \epsilon$. Then
\begin{equation*}
P(\|U^n\|_{\Ll}>R_2)< \epsilon.
\end{equation*}
Denote
\begin{equation*}
B_\epsilon = \{U^n\in \Ll(\mathcal{O}): \|U^n\|_{\Ll}\leq R_2\}.
\end{equation*}
Then $P(B_\epsilon)\geq 1-\epsilon$. Hence for every $\epsilon >0$, there exists a compact subset $B_\epsilon$ of $ \Ll(\mathcal{O})$ such that $P(B_\epsilon)\geq 1-\epsilon$. This proves the Lemma.
\end{proof}
\begin{theorem}
\label{martexist}
There exists a martingale solution of \eqref{sc1}-\eqref{sc3} provided the Assumptions \ref{Hyp} are satisfied.
\end{theorem}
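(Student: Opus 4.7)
The plan is to construct a martingale solution as a limit of the Galerkin approximations via a tightness-plus-representation argument on the path space $\mathcal{Z}$. By Lemmas \ref{tight}, \ref{tight2}, and \ref{tight3}, the laws $\mathcal{L}(u^n)$, $\mathcal{L}(\z^n)$, and $\mathcal{L}(U^n)$ are tight on $(\mathcal{Z},\tau)$, on $L^2(0,T;L^2(\mathcal{O}))_w$, and on $L^2(0,T;\Ll(\mathcal{O}))_w$ respectively, while the laws of the noises $W^n$ and of the Poisson random measures $N^n$ are tight on their canonical path spaces. Hence the joint law of $(u^n,\z^n,U^n,W^n,N^n)$ is tight on the product space. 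Since $\tau$ involves weak and weak-$\star$ topologies so that the path space is not Polish, I would invoke Jakubowski's generalization of the Skorokhod representation theorem to obtain, on a new probability space $(\overline{\Omega},\overline{\mathcal{F}},\overline{P})$ equipped with a suitable filtration $\overline{F}$, processes $(\ou^n,\oz^n,\overline{U}^n,\ow^n,\on^n)$ having the same joint laws as the originals and converging $\overline{P}$-almost surely to a limit $(\ou,\oz,\overline{U},\ow,\on)$.

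The heart of the argument is identifying this limit as a solution of \eqref{sc1}--\eqref{sc3} in the martingale sense. The a priori bounds of Proposition \ref{prop} transfer to the new probability space by equality in law, so $\ou\in L^\infty(0,T;\Ll)\cap L^2(0,T;\h)$ and $\oz\in L^2(0,T;L^2)$ almost surely. Because $\tau_4$, the strong $L^2(0,T;\Ll(\mathcal{O}))$ topology, is part of the topology of $\mathcal{Z}$, the convergence $\ou^n\to\ou$ holds strongly in $L^2(0,T;\Ll)$, and this is precisely what is needed to pass to the limit in the nonlinear term $B$: using \eqref{e1} together with the Ladyzhenskaya inequality \eqref{eq13} and the uniform bound of $\ou^n+w^0$ in $L^4(0,T;\mathbb{L}^4)$ derived from the $L^\infty(0,T;\Ll)\cap L^2(0,T;\h)$ estimate, one obtains $B(\ou^n)\to B(\ou)$ weakly in $L^2(0,T;\mathbb{H}^{-1})$. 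The linear terms $A\ou^n$, $g\nabla\oz^n$, and $f$ pass to the limit by weak convergence in $L^2(0,T;\mathbb{H}^{-1})$, and the projections $P_n$ drop out because $\bigcup_n\Ll_n$ is dense in $\h$. The continuity equation \eqref{sc2} for $(\ou,\oz)$ passes to the limit analogously in the weak sense.

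The main technical hurdle is the identification of the two stochastic integrals in the limiting equation. For the Wiener part, I would first verify that $\ow$ is a cylindrical Wiener process with respect to the filtration generated by $(\ou,\oz,\ow,\on)$, and then show that the residual process $\ou(t)-\ou(0)-\overline{U}-\int_0^t(f(s)-A\ou(s)-B(\ou(s))-g\nabla\oz(s))\,ds$ is a local martingale whose continuous part has predictable quadratic variation $\int_0^\cdot\s(s,\ou)Q\s(s,\ou)^\ast\,ds$ and whose discontinuous part has predictable compensator given by the push-forward of $\lambda(dz)\,ds$ under $H(\ou,\cdot)$; a Stroock--Varadhan-type martingale representation then exhibits the two stochastic integrals with respect to $\ow$ and $\on$. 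The pointwise identifications $\s_0=\s(\cdot,\ou)$ and $H_0=H(\ou,\cdot)$ follow from S.3 together with the strong $L^2(0,T;\Ll)$ convergence, by the same monotonicity/hemicontinuity argument employed in the proof of Theorem \ref{thmuniq}. I expect the subtlest point to be the synchronization of the jump times of $\ou^n$ and $\on^n$ under the Jakubowski representation, needed to pass to the limit in the compensated Poisson stochastic integral and to justify the isometry \eqref{iso} on the new probability space.
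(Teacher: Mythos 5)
Your outline agrees with the paper up to and including the tightness step and the passage to a new probability space, but it diverges at the key point, the identification of the stochastic integrals, and your route is genuinely different from (and heavier than) the one the paper takes. The paper invokes a Skorokhod-type representation theorem (following Motyl) whose conclusion includes the extra statement that the noises are not merely equal in law but literally unchanged: $(\on^k(\overline{\omega}),\ow^k(\overline{\omega}))=(N^*(\overline{\omega}),W^*(\overline{\omega}))$ for all $\overline{\omega}$. With the driving Wiener process and Poisson random measure fixed along the whole sequence, the convergence of the stochastic terms becomes a direct $L^2(0,T\times\overline{\Omega})$ estimate: by the It\^o isometry and the isometry \eqref{iso}, together with hypothesis S.3, the strong convergence $\ou^n\to u^*$ in $L^2(0,T;\Ll(\mathcal{O}))$, the uniform bounds from Proposition \ref{prop}, and Vitali plus dominated convergence, one gets $\int_0^t(\s^n(s,\ou^n)-\s(s,u^*))\,dW^*\to 0$ and the analogous statement for the compensated Poisson integral; the whole weak formulation is then packaged into functionals $K^1,K^2$ and the limit identity follows term by term. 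Your proposal instead identifies the limit by a Stroock--Varadhan-type martingale problem: show the residual process is a local martingale, compute its quadratic variation and the compensator of its jump measure, and invoke a martingale representation theorem to manufacture a Wiener process and a Poisson random measure. This is a legitimate and more widely applicable strategy (it does not need the ``same noise'' form of the representation theorem), but it costs you an infinite-dimensional representation theorem for discontinuous martingales, a possible further enlargement of the probability space, and exactly the delicate synchronization of jump times that you flag as the subtlest point --- a difficulty that simply does not arise in the paper's argument because the noise is unchanged. Two smaller remarks: with the strong $L^2(0,T;\Ll(\mathcal{O}))$ convergence provided by $\tau_4$, the identifications $\s_0=\s(\cdot,\ou)$ and $H_0=H(\ou,\cdot)$ follow directly from the Lipschitz condition S.3, so the Minty--Browder hemicontinuity device of Theorem \ref{thmuniq} is not needed here; and your treatment of $B$ via \eqref{e1} and the uniform $L^4$ bound matches what the paper does, only phrased as weak $L^2(0,T;\mathbb{H}^{-1}(\mathcal{O}))$ convergence rather than convergence of the integrated pairings.
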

\begin{proof}
\textbf{Step I}:\\
\noindent
\gr{By Lemmas \ref{tight}, \ref{tight2} and \ref{tight3}, the set of measures $\{\mathcal{L}(u^n),n\in\mathbb{N}\},$ $\{\mathcal{L}(\z^n),n\in\mathbb{N}\}$ and $\{\mathcal{L}(U^n),n\in\mathbb{N}\}$ are tight on the spaces $(\mathcal{Z},\tau),$ $L^2(0,T;L^2(\mathcal{O}))\cap C([0,T]; H^{-1}(\mathcal{O}))$, and $\Ll(\mathcal{O})$ respectively.} Define $N^n=N,\;\forall n\in\mathbb{N}$. Then the set of measures $\{\mathcal{L}(N^n),n\in\mathbb{N}\}$ is tight on the space \gr{$M_{\bar{\mathbb{N}}}([0,T]\times Z)$}, where $\bar{\mathbb{N}}:=\mathbb N\cup\{\infty\}$ and $M_{\bar{\mathbb{N}}}(S)$ denotes the set of all $\bar{\mathbb{N}}-$valued measures on the measurable space $(S, \mathscr{B}(S))$ (see Motyl \cite{motyl2013stochastic},\cite{motyl2011} for more details). Define $W^n=W,\;\forall n\in\mathbb{N}$. Then the set of measures $\{\mathcal{L}(W^n),n\in\mathbb{N}\}$ is tight on the space \gr{$C([0,T];\mathbb{R})$ of continuous function from $[0, T]$ to $\mathbb{R}$ with standard supremum norm.} \\
Thus the set $\{\mathcal{L}(u^n,\z^n,U^n,N^n, W^n),n\in\mathbb{N}\}$ is tight on \gr{$\mathcal{Z} \times \Big(L^2(0,T;L^2(\mathcal{O}))\cap C([0,T]; H^{-1}(\mathcal{O}))\Big) \times \Ll(\mathcal{O}) \times M_{\bar{\mathbb{N}}}([0,T]\times Z) \times C([0,T];\mathbb{R}).$}
\gr{By the Skorokhod theorem \cite{motyl2011}}, there exists a subsequence $(n_k)_{k\in\mathbb{N}}$, a probability space $(\overline{\Omega},\overline{\mathcal{F}},\overline{P})$, and, on this space, random variables $(u^*,z^*,U^*,N^*,W^*)$, $\{(\ou^k,\oz^k,\overline{U}^k,\on^k,\ow^k),k\in\mathbb{N}\}$ such that
\begin{itemize}
\item[(i)] $\mathcal{L}((\ou^k,\oz^k,\overline{U}^k,\on^k,\ow^k))=\mathcal{L}((u^{n_k},\z^{n_k},U^{n_k},N^{n_{k}},W^{n_k}))$ for all $k\in\mathbb{N}$,
\item[(ii)]$(\ou^k,\oz^k,\overline{U}^k,\on^k,\ow^k)\rightarrow (u^*,z^*,U^*,N^*,W^*)$ in \gr{$\mathcal{Z} \times \Big(L^2(0,T;L^2(\mathcal{O}))\cap C([0,T]; H^{-1}(\mathcal{O}))\Big) \times \Ll(\mathcal{O}) \times M_{\bar{\mathbb{N}}}([0,T]\times Z) \times C([0,T];\mathbb{R})$} with probability 1 on\\ $(\overline{\Omega},\overline{\mathcal{F}},\overline{P})$ as $k\rightarrow\infty$,
\item[(iii)]$(\on^k(\overline{\omega}),\ow^k(\overline{\omega}))=(N^*(\overline{\omega}),W^*(\overline{\omega}))$ for all $\overline{\omega}\in\overline{\Omega}$.
\end{itemize}
We denote these sequences again by $((u^n,\z^n,U^n,N^n,W^n))_{n\in\mathbb{N}}$ and \\
$((\ou^n,\oz^n,\overline{U}^n,\on^n,\ow^n))_{n\in\mathbb{N}}$. Using the definition of the space $\mathcal{Z}$, we have $\overline{P}-a.s.$
\begin{align} \label{conv.u*}
\ou^n\rightarrow u^*\;\text{in}\;L^2_{w}(0,T;\h(\mathcal{O}))\cap L^2(0,T;\Ll(\mathcal{O}))\cap D([0,T];\mathbb{H}^{-1}(\mathcal{O}))\nonumber\\ \cap D([0,T];\Ll_w(\mathcal{O})),
\end{align}
\gr{\begin{equation}
\oz^n\rightarrow z^*\quad\text{in}\quad L^2(0,T;L^2(\mathcal{O}))\cap C([0,T]; H^{-1}(\mathcal{O})),
\end{equation}
and
\begin{equation}
\overline{U}^n\rightarrow U^*\quad\text{in}\quad \Ll(\mathcal{O})
\end{equation}}
\gr{Note that, since $D([0,T];\mathbb{L}^2_n(\mathcal{O}))$ is a Polish space and $L^2_{w}(0,T;\h(\mathcal{O}))\cap L^2(0,T;\Ll(\mathcal{O}))\cap D([0,T];\mathbb{H}^{-1}(\mathcal{O}))\cap D([0,T];\Ll_w(\mathcal{O}))$ is a separable metric space, due to Kuratowski theorem  (see Theorem 1.1, Chapter 1 of \cite{Vakhania}), Borel subsets of $D([0,T]; \Ll_n(\mathcal{O}))$ are Borel subsets of  $L^2_{w}(0,T;\h(\mathcal{O}))\cap L^2(0,T;\Ll(\mathcal{O}))\cap D([0,T];\mathbb{H}^{-1}(\mathcal{O}))\cap D([0,T];\Ll_w(\mathcal{O}))$, and $P\{u^n\in D([0,T];\\ \Ll_n(\mathcal{O}))\}=1$. Hence, we may assume that $\overline{u}^n$ takes values in $\Ll_n(\mathcal{O})$ and that the laws on $D([0,T]; \Ll_n(\mathcal{O}))$ of $u^n$ and $\ou^n$ are equal.
\par\noindent
In view of the above, it is straightforward to show that the sequence $(\ou^n)_{n\in\mathbb{N}}$ satisfies the same estimates as the original sequence $(u^n)_{n\in\mathbb{N}}$. In particular, for any $p \geq 2,$ we have}
\begin{equation}
\label{ineq1}
\sup_{n\geq 1}\overline{\E}[\sup_{0\leq s\leq T}\|\ou^n(s)\|_{\Ll}^p]\leq C_{1(p)},
\end{equation}
and
\begin{equation}
\label{ineq2}
\sup_{n\geq 1}\overline{\E}[\int_0^T\|\ou^n(s)\|_{\h}^2 ds]\leq C_{2(2)}.
\end{equation}

\gr{Again repeating the same arguments (as above) 
for the random variables $\oz^n$ and $\z^n,$ it is obvious to show that the sequence $(\oz^n)_{n\in\mathbb{N}}$ satisfies the same estimates as the original sequence $(\z^n)_{n\in\mathbb{N}}$. Hence we have,}
\begin{equation}
\label{ineq3}
\sup_{n\geq 1}\overline{\E}[\sup_{0\leq s\leq T}\|\oz^n(s)\|_{L^2}^2]\leq C_{1(2)}.
\end{equation}
\gr{Now, using the assumption $\E[\|U\|_{\Ll}^2] \leq c$, we have
\begin{equation} 
\sup_{n\geq 1}\overline{\E}[\|\overline{U}^n\|_{\Ll}^2]\leq c.
\end{equation}}

\gr{
\begin{proposition} \label{X*r}
Let $u^*$ be the limiting process defined above. Then 
 \begin{align} \label{ener.*}
 \oE[\int_0^T \|u^*(s)\|_{\h}^2 ds] \leq C,
\end{align}
and for $r \geq 2,$ 
\begin{align} \label{X*r.1}
\oE \Big[\sup_{s \in [0,T]}\|u^*(s)\|_{\Ll}^r \Big]\leq C_r.
\end{align}
\end{proposition}

\begin{proof} We begin the proof by establishing our Claim.
\begin{claim} \label{conv.int.*}  $\ou^n \xrightarrow{w} u^* \,\, \mbox{ in}\,\, L^2(\overline{\Omega};L^{2}(0,T;\h(\mathcal{O}))),$ i.e.,
\begin{align} \label{X*r.H}
\oE\Big[\int_0^T\langle \ou^n(t,\omega),\phi(t,\omega)\rangle \d t \Big] \rightarrow \oE\Big[\int_0^T \langle u^*(t,\omega),\phi(t,\omega)\rangle \d t \Big]\notag\\
 \quad \forall\,\, \phi\,\, \in L^{2}(\overline{\Omega};L^{2}(0,T;\mathbb{H}^{-1}(\mathcal{O}))).
\end{align} 
\end{claim}
\begin{pf}
\end{pf}
Let $1<s<2.$ Then $\dfrac{2s}{2-s}>2.$ Let $\phi\,\, \in L^{\frac{2s}{2-s}}(\overline{\Omega};L^{2}(0,T;\mathbb{H}^{-1}(\mathcal{O}))).$ Then $\phi(\cdot,\omega)\,\, \in L^{2}(0,T;\mathbb{H}^{-1}(\mathcal{O})))\,\,\overline{P}-a.s.$ \\By \eqref{conv.u*} we have $\ou^n\rightarrow u^*\;\text{in}\;L^2_{w}(0,T;\h(\mathcal{O}))\,\,\overline{P}-a.s.$ Hence,
\begin{align} \label{conv.u.P}
\int_0^T\langle \ou^n(t,\omega),\phi(t,\omega)\rangle \d t  \rightarrow \int_0^T \langle u^*(t,\omega),\phi(t,\omega)\rangle \d t \,\,\overline{P}-\mbox{a.s.}
\end{align}
We note that by H\"{o}lder's inequality with $\frac{1}{2}+\frac{1}{2}=1$ and $\frac{s}{2}+\frac{2-s}{2}=1$ and using \eqref{ineq2} we achieve
\begin{align} \label{unif.int.u}
&\oE\Big[|\int_0^T\langle \ou^n(t,\omega),\phi(t,\omega)\rangle \d t|^s \Big] \leq \oE\Big[\Big(\int_0^T| \langle \ou^n(t,\omega),\phi(t,\omega)\rangle| \d t\Big)^s \Big]\notag\\
&\leq \oE\Big[\Big(\int_0^T| \| \ou^n(t)\|_{\h} \|\phi(t)\|_{\mathbb{H}^{-1}} \d t\Big)^s \Big]\notag\\& \leq \oE\Big[\Big(\int_0^T| \| \ou^n(t)\|^2_{\h} dt\Big)^{\frac{s}{2}} \Big(\int_0^T\|\phi(t)\|^2_{\mathbb{H}^{-1}} \d t \Big)^{\frac{s}{2}} \Big]\notag\\
& \leq \Big[\oE\Big(\int_0^T| \| \ou^n(t)\|^2_{\h} dt\Big)\Big]^{\frac{s}{2}} \Big[\oE \Big(\int_0^T\|\phi(t)\|^2_{\mathbb{H}^{-1}} \d t \Big)^{\frac{s}{2-s}} \Big]^{\frac{2-s}{2}}\notag\\& \leq C\, \Big[\oE \Big(\|\phi\|^{\frac{2s}{2-s}}_{L^2(0,T;\mathbb{H}^{-1})} \Big)\Big]^{\frac{2-s}{2}} \leq C.
\end{align}
Using \eqref{conv.u.P}, \eqref{unif.int.u} and by Vitali Theorem we have
 \begin{align}
\oE\Big[\int_0^T\langle \ou^n(t,\omega),\phi(t,\omega)\rangle \d t \Big] \rightarrow \oE\Big[\int_0^T \langle u^*(t,\omega),\phi(t,\omega)\rangle \d t \Big]\notag\\
 \quad \forall\,\, \phi\,\, \in L^{\frac{2s}{2-s}}(\overline{\Omega};L^{2}(0,T;\mathbb{H}^{-1}(\mathcal{O}))).
\end{align}
As  $1<s<2,$ so $\dfrac{2s}{2-s}>2.$ Hence  $L^{\frac{2s}{2-s}}(\overline{\Omega};L^{2}(0,T;\mathbb{H}^{-1}(\mathcal{O})))$ is dense in $L^{2}(\overline{\Omega};L^{2}(0,T;\mathbb{H}^{-1}(\mathcal{O}))).$
Therefore,
\begin{align*}
\oE\Big[\int_0^T\langle \ou^n(t,\omega),\phi(t,\omega)\rangle \d t \Big] \rightarrow \oE\Big[\int_0^T \langle u^*(t,\omega),\phi(t,\omega)\rangle \d t \Big]\notag\\
 \quad \forall\,\, \phi\,\, \in L^{2}(\overline{\Omega};L^{2}(0,T;\mathbb{H}^{-1}(\mathcal{O}))).
\end{align*} This proves our claim.

Using Claim \ref{conv.int.*} it can be directly observed that $u^* \in L^2(\overline{\Omega};L^2(0,T;\h(\mathcal{O})),$ i.e., \eqref{ener.*} is established. \newline
Now we will prove \eqref{X*r.1}.
By \eqref{ineq1} we have that $\{\ou^n\}_{n \geq 1}$ is uniformly bounded in $L^r(\overline{\Omega};\\L^{\infty}(0,T;\Ll(\mathcal{O})))$ for $r \geq 2.$ Since $L^r(\overline{\Omega};L^{\infty}(0,T;\Ll(\mathcal{O})))$ is isomorphic to the space $(L^\frac{r}{r-1}(\overline{\Omega};\\L^{1}(0,T;\Ll(\mathcal{O}))))^{*},$ by Banach Alaoglu Theorem, there exists a subsequence still denoted by $\{\ou^n\}_{n \geq 1}$ and $\zeta \in L^r(\overline{\Omega};L^{\infty}(0,T;\Ll(\mathcal{O})))$ such that 
$$ \ou^n \xrightarrow{w^{*}} \zeta \quad \mbox{ in}\quad L^r(\overline{\Omega};L^{\infty}(0,T;\Ll(\mathcal{O})))$$ i.e.,
\begin{align} \label{X*r1}
\oE\Big[\int_0^T(\ou^n(t,\omega),\phi(t,\omega))_{\Ll}\d t \Big] \rightarrow \oE\Big[\int_0^T(\zeta(t,\omega),\phi(t,\omega))_{\Ll}\d t \Big]\notag\\
 \quad \forall\,\, \phi\,\, \in L^{\frac{r}{r-1}}(\overline{\Omega};L^{1}(0,T;\Ll(\mathcal{O}))).
\end{align}
Employing Claim \ref{conv.int.*} and Gelfand triple $\h(\mathcal{O}) \subset \Ll(\mathcal{O}) \subset \mathbb{H}^{-1}(\mathcal{O})$ we have
\begin{align} \label{X*r2}
\oE\Big[\int_0^T(\ou^n(t,\omega),\phi(t,\omega))_{\Ll}\d t \Big] \rightarrow \oE\Big[\int_0^T(u^*(t,\omega),\phi(t,\omega))_{\Ll}\d t \Big]\notag\\
 \quad \forall\,\, \phi\,\, \in L^{2}(\overline{\Omega};L^{2}(0,T;\Ll(\mathcal{O}))).
\end{align} 
Again for $r \geq 2,\,L^{2}(\overline{\Omega};L^{2}(0,T;\Ll(\mathcal{O}))) $ is dense subspace of $L^{\frac{r}{r-1}}(\overline{\Omega};L^{1}(0,T;\Ll(\mathcal{O}))).$ 
 \eqref{X*r1} and \eqref{X*r2} jointly produces
 \begin{align*}
\oE\Big[\int_0^T(\zeta(t,\omega),\phi(t,\omega))_{\Ll}\d t \Big] =\oE\Big[\int_0^T(u^*(t,\omega),\phi(t,\omega))_{\Ll}\d t \Big]\notag\\
 \quad \forall\,\, \phi\,\, \in L^{2}(\overline{\Omega};L^{2}(0,T;\Ll(\mathcal{O}))).
 \end{align*}
\gr{Thus we have, $u^*(t,\omega)=\zeta(t,\omega)$ for almost every $t \in [0,T]$ and $\omega \in \overline{\Omega}.$ Since $\zeta \in L^{r}(\overline{\Omega};L^{\infty}(0,T;\Ll(\mathcal{O}))),$ we infer that $u^* \in L^{r}(\overline{\Omega};L^{\infty}(0,T;\Ll(\mathcal{O}))),$ i.e.,
$$\oE \Big[\sup_{s \in [0,T]}\|u^*(s)\|_{\Ll}^r \Big]\leq C_r$$ for some constant $C_r$ (depending on r).}
\end{proof}}

\textbf{Step II}:
\gr{\begin{lemma} \label{conv.each} For all $v\in\h(\mathcal{O})$ and for all $w\in \Hh(\mathcal{O}),$
\begin{itemize}
\item[(i)] $\lim_{n\rightarrow\infty}\overline{\E}[\int_0^T |(\ou^n(t)-u^*(t),v)_{\Ll}| dt]=0.$
\item[(ii)] $\lim_{n\rightarrow\infty}\overline{\E}[|(\ou^n(0)-u^*(0),v)_{\Ll}|]=0.$
\item[(ii)] $\lim_{n\rightarrow\infty}\overline{\E}[|(\overline{U}^n-U^*,v)_{\Ll}|]=0.$
\item[(iv)] $\lim_{n\rightarrow\infty}\overline{\E}[\int_0^T |\int_0^t (A\ou^n(s)-Au^*(s),v)_{\Ll} ds|dt]=0.$
\item[(v)] $\lim_{n\rightarrow\infty} \overline{\E}[\int_0^T|\int_0^t (B(\ou^n(s))-B(u^*(s)),v)_{\Ll} ds|dt]=0.$
\item[(vi)] \gr{$\lim_{n\rightarrow\infty}\overline{\E}[\int_0^T|\int_0^t\langle g\nabla(\oz^n(s)-z^*(s)),v \rangle ds|dt]=0.$}
\item[(vii)] $\lim_{n\rightarrow\infty}\overline{\E}[\int_0^T\|(\int_0^t (\s^n(s,\ou^n(s))-\s(s,u^*(s)))dW^*(s),v)_{\Ll}\|_{L_Q(\mathbb{L}^2;\mathbb{R})}^2]dt=0.$
\item[(viii)] $\lim_{n\rightarrow\infty}\overline{\E}[\int_0^T|\int_0^t \int_Z (H^n(\ou^n(s),z)-H(u^*(s),z),v)_{\Ll} \lambda(dz)ds|^2dt]=0.$
\item[(ix)] $\lim_{n\rightarrow\infty}\overline{\E}[\int_0^T|\int_0^t \int_Z(H^n(\ou^n(s-),z)-H(u^*(s-),z),v)_{\Ll}\tilde{N}^*(ds,dz)|^2 dt]=0.$
\item[(x)] $\lim_{n\rightarrow\infty}\overline{\E}[\int_0^T |(\oz^n(t)-z^*(t),w)_{L^2}| dt]=0.$
\item[(xi)] $\lim_{n\rightarrow\infty}\overline{\E}[|(\oz^n_0-z^*_0,w)_{L^2}|]=0.$
\item[(xii)] $\lim_{n\rightarrow\infty}\overline{\E}[\int_0^T |\int_0^t (Div(h\ou^n(s))-Div(hu^*(s)),w)_{L^2} ds|dt]=0.$
\end{itemize}
\end{lemma}}

\begin{proof} Let $v \in \h(\mathcal{O})$ and $w \in \Hh(\mathcal{O})$ be fixed.\begin{itemize}
\item[(i)]  We have  $\ou^n\rightarrow u^*\;\,\text{in}\; D([0,T];\Ll_w(\mathcal{O})),\,\, \overline{P}-\mbox{a.s.}$
 i.e., $(\ou^n(t)-u^*(t), v)_{\Ll} \rightarrow 0 \,\,\text{in}\,\, D([0,T];\mathbb{R}),\,\, \overline{P}-\textit{a.s.}$
Hence, in particular for almost all $t \in[0,T],$
\gr{\begin{align} \label{conv.1}
&\lim_{n \rightarrow \infty} (\ou^n(t),v)_{\Ll}=(u^*(t),v)_{\Ll}  \quad \overline{P}-\mbox{a.s.}
\end{align}
Employing H\"{o}lder's inequality  and \eqref{ineq1} we obtain
\begin{align} \label{uni.1}
&\overline{\mathbb{E}}\Big[\mathlarger{\int_0^T}|(\ou^n(t)-u^*(t), v)_{\Ll}|^2 d t\Big] \leq \overline{\mathbb{E}}\Big[\mathlarger{\int_0^T}|\|\ou^n(t)-u^*(t)\|_{\Ll}^2 \|v\|_{\Ll}^2 d t\Big]\notag \\ & \leq \|v\|_{\Ll}^2\,\overline{\mathbb{E}}\Big[\mathlarger{\int_0^T}|\|\ou^n(t)-u^*(t)\|_{\Ll}^2  d t\Big] \leq  2\|v\|_{\Ll}^2\,\overline{\mathbb{E}}\Big[\mathlarger{\int_0^T}(\|\ou^n(t)\|^2_{\Ll}+\|u^*(t)\|_{\Ll}^2) d t\Big] \notag\\
& \leq 2cT \|v\|_{\h}^2\,\overline{\mathbb{E}}\Big[\sup_{0 \leq s \leq T}(\|\ou^n(s)\|^2_{\Ll}+\|u^*(s)\|_{\Ll}^2)\Big] \leq C.
\end{align}
Hence, by employing \eqref{conv.1}, \eqref{uni.1} and by Vitali Theorem we have assertion (i).}
\item[(ii)] 
We have $\ou^n\rightarrow u^*$ in $D([0,T];\Ll_w(\mathcal{O})),\,\, \overline{P}$-a.s. and $u^*$ is right continuous at $t=0,$ we infer that 
\gr{\begin{align} \label{conv.u_n0}
(\ou^n(0),v)_{\Ll} \rightarrow (u^*(0),v)_{\Ll}\,\,\overline{P}-\mbox{a.s.}
\end{align}
Using \eqref{ineq1},
\begin{align} \label{uni.u_n0}
\overline{\E}[|(\ou^n(0),v)_{\Ll}|^2] \leq \|v\|_{\Ll}^2 \overline{\E}[\|\ou^n(0)\|_{\Ll}^2] \leq c\|v\|_{\h}^2 \overline{\E}[\sup_{0\leq s\leq T}\|\ou^n(s)\|_{\Ll}^2] \leq C,
\end{align}
\eqref{conv.u_n0}, \eqref{uni.u_n0} and Vitali Theorem gives   
\begin{equation*}
\gr{\lim_{n\rightarrow\infty}\overline{\E}[|(\ou^n(0)-u^*(0),v)_{\Ll}|]=0.}
\end{equation*}}

\gr{
\item[(iii)]
Since $\overline{U}^n\rightarrow U^*$ in $\Ll_w(\mathcal{O})$,  $\overline{P}$-a.s and as $v \in \h(\mathcal{O}) \subset \Ll(\mathcal{O})$ we have
\begin{equation*}
(\overline{U}^n,v)_{\Ll}\rightarrow (U^*,v)_{\Ll},\quad \overline{P}-\mbox{a.s.}
\end{equation*}
Since $\overline{\E}[|(\overline{U}^n,v)_{\Ll}|^2]\leq \|v\|^2_{\Ll} \overline{\E}\|\overline{U}^n\|_{\Ll}^2 \leq C$, by the Vitali theorem
\begin{equation*}
\lim_{n\rightarrow\infty}\overline{\E}[|(\overline{U}^n-U^*,v)_{\Ll}|]=0.
\end{equation*}}
\gr{
\item[(iv)] We see from \eqref{prop.A} that $u \mapsto \int_0^t (Au(s),v)_{\Ll}ds$ from $L^2(0,T;\h(\mathcal{O}))$ to $\mathbb{R}$ is linear and continuous.
Since $\ou^n\rightarrow u^*$ in $L^2_w(0,T;\h(\mathcal{O}))$, $\overline{P}$-a.s. we have,
\begin{align} \label{conv.A1}
\lim_{n\rightarrow\infty}\int_0^t (A\ou^n(s),v)_{\Ll}ds=\int_0^t (Au^*(s),v)_{\Ll}ds \quad \overline{P}-\mbox{a.s.}
\end{align}
Hence by H\"{o}lder's inequality and \eqref{ineq2}, we achieve for all $t \in [0,T]$ and $n \in \mathbb{N},$
\begin{align} \label{conv.A2}
\overline{\E}[|\int_0^t (A\ou^n(s),v)_{\Ll} ds|^2]
\leq C_1\|v\|^2_{\h}\overline{\E}[\int_0^t \|\ou^n(s)\|^2_{\h}ds] \leq c\,C_{2(2)}
\end{align} for some constant $c>0.$
Therefore by \eqref{conv.A1}, \eqref{conv.A2} and Vitali theorem, for all $t\in [0,T]$
\begin{equation*}
\lim_{n\rightarrow\infty}\overline{\E}[|\int_0^t (A\ou^n(s)-Au^*(s),v)_{\Ll} ds|]=0.
\end{equation*}
Hence by the dominated convergence theorem
\begin{equation*}
\lim_{n\rightarrow\infty}\int_0^T \overline{\E}[|\int_0^t (A\ou^n(s)-Au^*(s),v)_{\Ll} ds|]=0.
\end{equation*}}


\item[(v)] First we note that $\overline{P}$-a.s. $\|w_0\|_{L^4(0,T;\mathbb{L}^4)}\leq C$ and using \eqref{conv.u*} we have $\overline{P}$-a.s. $\|\ou^n\|_{L^2(0,T;\h)} \leq C$ and $\|u^*\|_{L^2(0,T;\h)} \leq C$ for some $C>0.$ Exploiting \eqref{eq4}, \eqref{e1}, H\"olders inequality with $\frac{1}{2}+\frac{1}{4}+\frac{1}{4}=1,$ and then using $\ou^n \rightarrow u^*$ in $L^2(0,T;\Ll(\mathcal{O}))\,\,\overline{P}$-a.s., we infer that $\overline{P}$-a.s.
\begin{align} \label{eq.B1}
&\Big|\int_0^t (B(\ou^n(s))-B(u^*(s)),v)_{\Ll} ds\Big|\notag\\&=\gamma \Big|\int_0^t \Big(|\ou^n(s) +w_0(s)|(\ou^n(s) +w_0(s))-|u^*(s) +w_0(s)|(u^*(s) +w_0(s)),v\Big)_{\Ll} ds\Big|\notag\\
&=\gamma \Big|\int_0^t \Big(|\ou^n(s) +w_0(s)|(\ou^n(s) +w_0(s) -(u^*(s) +w_0(s))),v\Big)_{\Ll} ds\notag\\& \quad +\int_0^t \Big( (|\ou^n(s) +w_0(s)|- |u^*(s) +w_0(s)|)(u^*(s) +w_0(s)),v\Big)_{\Ll} ds\Big|\notag\\
&\leq \gamma \int_0^t \Big|\Big(|\ou^n(s) +w_0(s)|(\ou^n(s)-u^*(s)),v\Big)_{\Ll} ds\Big| \notag\\ &\quad + \gamma\int_0^t \Big| \Big( (|\ou^n(s) +w_0(s)|- |u^*(s) +w_0(s)|)(u^*(s) +w_0(s)),v\Big)_{\Ll} ds\Big|\notag\\
& \leq \gamma \int_0^t \|\ou^n(s) +w_0(s)\|_{\mathbb{L}^4} \|\ou^n(s)-u^*(s)\|_{\Ll} \|v\|_{\mathbb{L}^4} ds\notag\\&\quad + \gamma \int_0^t \||\ou^n(s) +w_0(s)|- |u^*(s) +w_0(s)|\|_{\Ll}\|u^*(s) +w_0(s)\|_{\mathbb{L}^4} \|v\|_{\mathbb{L}^4} ds \notag\\
& \leq 2 \gamma \Big(\int_0^t \|\ou^n(s)-u^*(s)\|_{\Ll}\|v\|_{\mathbb{L}^4} (\|\ou^n(s) +w_0(s)\|_{\mathbb{L}^4}+\|u^*(s) +w_0(s)\|_{\mathbb{L}^4}) ds \Big)\notag \\
& \leq 2 \gamma \|v\|_{\mathbb{L}^4} \Big[\Big(\int_0^T \|\ou^n(s)-u^*(s)\|_{\Ll}\|\ou^n(s) +w_0(s)\|_{\mathbb{L}^4}ds\Big)\notag \\&\quad + \Big(\int_0^T \|\ou^n(s)-u^*(s)\|_{\Ll}\|u^*(s) +w_0(s)\|_{\mathbb{L}^4}ds\Big)\Big]\notag \\
& \leq c \|v\|_{\h} \Big[\|\ou^n +w_0\|_{L^2(0,T;\mathbb{L}^4)} + \|u^* +w_0\|_{L^2(0,T;\mathbb{L}^4)} \Big]\notag\\& \quad\quad \times \|\ou^n-u^*\|_{L^2(0,T;\Ll)}\notag\\
& \leq c \|v\|_{\h} \Big[\|\ou^n\|_{L^2(0,T;\mathbb{L}^4)} +2\|w_0\|_{L^2(0,T;\mathbb{L}^4)} + \|u^*\|_{L^2(0,T;\mathbb{L}^4)} \Big] \|\ou^n-u^*\|_{L^2(0,T;\Ll)}\notag\\
&\leq c \|v\|_{\h} \Big[\|\ou^n\|_{L^2(0,T;\h)} +2\|w_0\|_{L^4(0,T;\mathbb{L}^4)} + \|u^*\|_{L^2(0,T;\h)} \Big] \|\ou^n-u^*\|_{L^2(0,T;\Ll)}\notag\\
& \leq C \|\ou^n-u^*\|_{L^2(0,T;\Ll)} \rightarrow 0 \quad \mbox{as} \quad n \rightarrow \infty.
\end{align}
Employing \eqref{ineq1}, \eqref{ineq2}, \eqref{e2}, Lemma \ref{lemw}, Proposition \ref{moment}, we observe that for every $t\in [0,T]$ and $1\leq r < 2,$ and for every $n \in \mathbb{N},$
\begin{align} \label{eq.B2}
&\overline{\E}[|\int_0^t(B(\ou^n(s)),v)_{\Ll} ds|^r]\leq c\,\|v\|^{r}_{\Ll} T^{r-1}\,\overline{\E}[\int_0^t \|B(\ou^n(s))\|^{r}_{\Ll}ds]\notag\\
&\leq  c\,C_2\|v\|^{r}_{\Ll}\,T^{r-1} \overline{\E}[\int_0^t \|\ou^n(s) +w^0(s)\|^{2r}_{\mathbb{L}^4} ds]\notag\\
&\leq C \|v\|^{r}_{\Ll}\,T^{r-1} \left\{\overline{\E}\left[\int_0^T \left(\|\overline{u}^n(s)\|_{\mathbb{L}^4}^{2r} + \|w^0(s)\|_{\mathbb{L}^4}^{2r} \right) ds\right]\right\} \leq C,
\end{align}
since, 
\begin{align*}
&\overline{\E}[\int_0^T \|\overline{u}^n(s)\|_{\mathbb{L}^4}^{2r} ds] \leq 2^{\frac{r}{2}}\oE[\int_0^T \|\overline{u}^n(s)\|_{\Ll}^r\, \|\overline{u}^n(s)\|_{\h}^{r} ds]\\ 
&\leq C_r \left(\oE[ \int_0^T \|\overline{u}^n(s)\|_{\h}^{2}ds] +\oE[ T\, \sup_{0 \leq s \leq T} \|\overline{u}^n(s)\|_{\Ll}^{\frac{2r}{2-r}}]\right) \leq C,
\end{align*} 
provided $\frac{2r}{2-r}\geq 2.$
Hence \eqref{eq.B1},\eqref{eq.B2} and by Vitali theorem, for all $t\in [0,T],$ we have
\begin{equation*}
\lim_{n\rightarrow\infty}\overline{\E}[|\int_0^t (B(\ou^n(s))-B(u^*(s)),v)_{\Ll} ds|]=0.
\end{equation*}
Hence by the dominated convergence theorem
\begin{equation*}
\lim_{n\rightarrow\infty}\int_0^T \overline{\E}[|\int_0^t (B(\ou^n(s))-B(u^*(s)),v)_{\Ll} ds|]dt=0.
\end{equation*}
\gr{\item[(vi)]
Since $\oz^n\rightarrow z^*$ in $L^2(0,T;L^2(\mathcal{O}))\,\,\overline{P}-$a.s., so for any $\tilde{\phi} \in L^2(0,T;\h(\mathcal{O})),$ using H\"older inequality we have $\overline{P}-$a.s.,
\begin{align} \label{esti.z.1.1}
&|\int_0^T \langle g\nabla(\oz^n(s)-z^*(s)),\tilde{\phi}(s) \rangle ds| =|-\int_0^T (g(\oz^n(s)-z^*(s)),\nabla\cdot \tilde{\phi}(s))_{L^2} ds|\notag\\ &\leq \int_0^T |(g(\oz^n(s)-z^*(s)),\nabla\cdot \tilde{\phi}(s))_{L^2}| ds \notag\\& \leq g \Big(\int_0^T \|\oz^n(s)-z^*(s)\|_{L^2} ds\Big)^{1/2} \Big(\int_0^T \|\nabla\cdot \tilde{\phi}(s)\|^2_{L^2} ds \Big)^{1/2}
\rightarrow 0 \quad \mbox{as} \quad n \rightarrow \infty.
\end{align}
In particular, we choose $\tilde{\phi}(s)=\chi_{(0,t)}(s)v$, where $v$ is a fixed element of $\mathbb{H}^1_0(\mathcal{O})$. Hence, from \eqref{esti.z.1.1}, we have for all $t\in[0,T]\,\, \overline{P}-$a.s.
\begin{align} \label{esti.z.1}
&\lim_{n\rightarrow\infty}\int_0^t \langle g\nabla(\oz^n(s)-z^*(s)),v \rangle ds =0.
\end{align}
Now by \eqref{ineq3}
\begin{align} \label{esti.z.2}
&\overline{\E}[|\int_0^t \langle g\nabla(\oz^n(s),v \rangle ds|^2] \leq g\overline{\E}[\int_0^t |(\oz^n(s),\nabla\cdot v)_{L^2}|^2 ds]\notag\\
& \leq g\|v\|_{\h}^2\overline{\E}[\int_0^t \|\oz^n(s)\|_{L^2}^2 ds]
 \leq gT\|v\|_{\h}^2\overline{\E}[\sup_{0\leq s\leq T} \|\oz^n(s)\|_{L^2}^2]\notag\\
& \leq C.
\end{align}
Hence from \eqref{esti.z.1}, \eqref{esti.z.2} and applying Vitali's theorem we obtain, for all $t\in [0,T],$ 
\begin{equation} \label{esti.z.3}
\lim_{n\rightarrow\infty}\overline{\E}[|\int_0^t \langle g\nabla(\oz^n(s)-z^*(s)),v \rangle ds|]=0.
\end{equation}
Again using \eqref{ineq3} and employing Cauchy-Schwartz inequality twice, we achieve 
\begin{align} \label{esti.z.4}
&|\overline{\E}[\int_0^t \langle g\nabla \oz^n(s),v \rangle ds]| \leq g\overline{\E}[\int_0^t |(\oz^n(s),\nabla\cdot v)_{L^2}| ds]\notag\\
& \leq g\|v\|_{\h} \overline{\E}[\int_0^t \|\oz^n(s)\|_{L^2} ds] \leq g\|v\|_{\h} T^{\frac{1}{2}}\Big[\overline{\E}\int_0^t \|\oz^n(s)\|^2_{L^2} ds\Big]^{\frac{1}{2}} \notag\\
 &\leq g\|v\|_{\h} T^{\frac{3}{2}} \overline{\E}[\sup_{0\leq s\leq T} \|\oz^n(s)\|_{L^2}^2]^{\frac{1}{2}}\leq C.
\end{align}
Hence by \eqref{esti.z.3}, \eqref{esti.z.4} and by the Dominated Convergence theorem we have
\begin{equation*}
\lim_{n\rightarrow\infty}\int_0^T\overline{\E}[|\int_0^t \langle g\nabla(\oz^n(s)-z^*(s)),v \rangle ds|]dt=0.
\end{equation*}}

\item[(vii)]
\gr{Using the Assumption \ref{Hyp} H.3, since $\ou^n\rightarrow u^*$ in $L^2(0,T;\Ll(\mathcal{O}))$ $\overline{P}$-a.s., we have $\overline{P}-$a.s.
\begin{align} \label{esti.sig}
&\lim_{n\rightarrow\infty}\int_0^t \|(\s(s,\ou^n(s))-\s(s,u^*(s)),v)_{\Ll}\|_{L_Q(\mathbb{L}^2;\mathbb{R})}^2 ds \notag\\
&\leq \lim_{n\rightarrow\infty}\|v\|_{\Ll}^2 \int_0^t \|\s(s,\ou^n(s))-\s(s,u^*(s))\|_{L_Q(\mathbb{L}^2;\mathbb{R})}^2 ds\notag\\
&\leq L\|v\|_{\Ll}^2 \lim_{n\rightarrow\infty}  \int_0^T \|\ou^n(s)-u^*(s)\|_{\Ll}^2 ds
 =0.
\end{align}
Using Assumption \ref{Hyp} H.2, \eqref{X*r.1} (in Proposition \ref{X*r}) and \eqref{ineq1} we observe that for every $t\in [0,T]$ and $r > 1$ and for every $n \in \mathbb{N},$
\begin{IEEEeqnarray}{lrl} \label{ener1.sig}
\overline{\E}[|\int_0^t \|(\s(s,\ou^n(s))-\s(s,u^*(s)),v)_{\Ll}\|_{L_Q(\mathbb{L}^2;\mathbb{R})}^2 ds|^r]\nonumber \\
\leq \|v\|^{2r}_{\Ll}\,T^{r-1}\,2^{2r-1} \overline{\E}[\int_0^t \big( \|\s(s,\ou^n(s))\|^{2r}_{L_Q(\mathbb{L}^2;\Ll)}+\|\s(s,u^*(s))\|^{2r}_{L_Q(\mathbb{L}^2;\Ll)}\big) ds]\nonumber \\
\leq K\|v\|^{2r}_{\Ll}\,T^{r-1}\,2^{2r-1} \overline{\E}[\int_0^t (2+\|\ou^n(s)\|^{2r}_{\Ll}+\| u^*(s)\|^{2r}_{\Ll}) ds]\nonumber \\
\leq c_r \overline{\E}[ (2+\sup_{0\leq s\leq T}\|\ou^n(s)\|^{2r}_{\Ll}+\sup_{0\leq s\leq T}\| u^*(s)\|^{2r}_{\Ll})]\leq\tilde{c}_r,
\end{IEEEeqnarray} for some constant $\tilde{c}_r>0$ (depending upon $r$).
Thus by employing \eqref{esti.sig} and \eqref{ener1.sig} and Vitali's theorem we obtain
\begin{equation} \label{conv.sig.n}
\lim_{n\rightarrow\infty}\overline{\E}[\int_0^t \|(\s(s,\ou^n(s))-\s(s,u^*(s)),v)_{\Ll}\|_{L_Q(\mathbb{L}^2;\mathbb{R})}^2 ds]=0 \quad \forall v \in \h(\mathcal{O}).
\end{equation}
For every $v \in \h(\mathcal{O})$ and every $s \in [0,T]$ we have
\begin{align*}
&( \s^n(s,\ou^n(s))-\s(s,u^*(s)),v )_{\Ll}=  ( \s(s,\ou^n(s)),P_n v)_{{\Ll}}-(\s(s,u^*(s)),v )_{\Ll}\\
&= ( \s(s,\ou^n(s)),P_n v-v)_{{\Ll}}+(\s(s,\ou^n(s))-\s(s,u^*(s)),v )_{\Ll}\\
& \leq \|\s(s,\ou^n(s))\|_{L_Q(\mathbb{L}^2;\Ll)} \|P_n v-v \|_{\Ll}+ (\s(s,\ou^n(s))-\s(s,u^*(s)),v )_{\Ll}\\
& \leq c\|\s(s,\ou^n(s))\|_{L_Q(\mathbb{L}^2;\Ll)} \|P_n v-v \|_{\h}+ (\s(s,\ou^n(s))-\s(s,u^*(s)),v )_{\Ll}.
\end{align*}
Then by Assumption \ref{Hyp} H.2 and by \eqref{ineq1} we obtain
\begin{align*}
&\oE \Big[\int_0^t \|( \s^n(s,\ou^n(s))-\s(s,u^*(s)),v )_{\Ll}\|^2_{L_Q(\mathbb{L}^2;\mathbb{R})} ds   \Big]\\
&\leq 2c\,K_2 \|P_n v-v\|^2_{\h} \,\oE \Big[\int_0^T (1+\|\ou^n(s)\|_{\Ll}^{2}) ds \Big]\\
&\quad+ \oE \Big[\int_0^t \|( \s(s,\ou^n(s))-(\s(s,u^*(s)),v )_{\Ll}\|^2_{L_Q(\mathbb{L}^2;\mathbb{R})} ds   \Big]\\
& \leq 2 \tilde{c} \|P_n v -v\|_{\h}^2 + \oE \Big[\int_0^t \|( \s(s,\ou^n(s))-(\s(s,u^*(s)),v )_{\Ll}\|^2_{L_Q(\mathbb{L}^2;\mathbb{R})} ds\Big].
\end{align*}
Thus by Lemma \ref{Pn.lem} and by  \eqref{conv.sig.n} we conclude that 
\begin{align} \label{ener1.sig.11}
\lim_{n \rightarrow \infty} \oE \Big[\int_0^t \|( \s^n(s,\ou^n(s))-\s(s,u^*(s)),v )_{\Ll}\|^2_{L_Q(\mathbb{L}^2;\mathbb{R})} ds   \Big]=0 \quad \forall \,v \in \h(\mathcal{O}).
\end{align}
Using \ada{$\ow^n=W^*,$} It\^o isometry, \eqref{ener1.sig.11} and $\forall \,v \in \h(\mathcal{O})$ we have
\begin{align}\label{ito.W}
&\lim_{n\rightarrow\infty}\overline{\E}[\|(\int_0^t (\s^n(s,\ou^n(s))d\ow^n(s)-\s(s,u^*(s)))dW^*(s),v)_{\Ll}\|_{L_Q(\mathbb{L}^2;\mathbb{R})}^2]\notag\\
=&\lim_{n\rightarrow\infty}\overline{\E}[\|(\int_0^t (\s^n(s,\ou^n(s))-\s(s,u^*(s)))dW^*(s),v)_{\Ll}\|_{L_Q(\mathbb{L}^2;\mathbb{R})}^2]\notag\\ 
&=\lim_{n\rightarrow\infty}\overline{\E}[\int_0^t \|(\s^n(s,\ou^n(s))-\s(s,u^*(s)),v)_{\Ll}\|_{L_Q(\mathbb{L}^2;\mathbb{R})}^2 ds] =0
\end{align} }
\gr{Again by the It\^o isometry, \eqref{eq.sig.n}, Assumption \ref{Hyp} and \eqref{ineq1} we have for all $t \in [0,T]$ and all $n \in \mathbb{N},$
\begin{align} \label{conv.W}
&\oE \Big[\int_0^t \|([\s^n(s,\ou^n(s))-\s(s,u^*(s))]dW^*(s),v )_{\Ll} \|^2_{L_Q(\mathbb{L}^2;\mathbb{R})} ds   \Big] \notag\\
&=\oE \Big[\int_0^t \|( \s^n(s,\ou^n(s))-\s(s,u^*(s)),v )_{\Ll}\|^2_{L_Q(\mathbb{L}^2;\mathbb{R})} ds \Big] \notag\\
&\leq 2cK_2\,T^{2}\,\|v\|_{\h}^{2} \,\oE \Big[ (2+\sup_{0\leq s\leq T}\|\ou^n(s)\|_{\Ll}^{2}+\sup_{0\leq s\leq T}\|u^*(s)\|_{\Ll}^{2})\Big]\leq \tilde{C_2}.
\end{align} 
Hence by \eqref{ito.W} and \eqref{conv.W} and the Dominated Convergence Theorem, we have the assertion (vii).}
\gr{
\item[(viii)] 
Using Assumption \ref{Hyp} H.3 and that $\ou^n\rightarrow u^*$ in $L^2(0,T;\Ll(\mathcal{O}))$ $\overline{P}$-a.s., we have $\overline{P}-$a.s.
\begin{align} \label{esti.H.lim}
&\int_0^t \int_Z |(H(\ou^n(s),z)-H(u^*(s),z),v)_{\Ll}|^2 \lambda(dz)ds \notag \\
&\leq \|v\|_{\Ll}^2 \int_0^t\int_Z \|(H(\ou^n(s),z)-H(u^*(s),z)\|_{\Ll}^2 \lambda(dz)ds \notag \\
&\leq  L\|v\|_{\Ll}^2 \int_0^T \|\ou^n(s)-u^*(s)\|_{\Ll}^2 ds \rightarrow 0 \,\,\mbox{as}\,\, {n\rightarrow\infty}.
\end{align}
Furthermore, using Assumption \ref{Hyp} H.2, \eqref{X*r.1} (in Proposition \ref{X*r}) and \eqref{ineq1}, for every $t\in [0,T],\,r > 1$ and $n \in \mathbb{N},$ we have the following inequality 
\begin{align} \label{esti.H.uni}
&\overline{\E}\Big[\Big|\int_0^t \int_Z \Big|(H(\ou^n(s),z)-H(u^*(s),z),v)_{\Ll}|^2 \lambda(dz)ds \Big|^r\Big] \notag\\
&\leq 2^r \|v\|_{\Ll}^{2r}\, \overline{\E}\Big[\Big|\int_0^t \int_Z \Big(\|H(\ou^n(s),z)\|_{\Ll}^{2}+\| H(u^*(s),z)\|_{\Ll}^{2} \Big)\lambda(dz)ds\Big|^r\Big] \notag\\
&\leq 2^r K^{2r}\,\|v\|_{\Ll}^{2r}\, \overline{\E}\Big[\Big|\int_0^t \Big(2+\|\ou^n(s)\|_{\Ll}^{2}+\| u^*(s)\|_{\Ll}^2\Big) ds \Big|^r\Big]\notag\\
&\leq 2^r K^{2r}\,\|v\|_{\Ll}^{2r}\,T^r \,\Big(1+ \overline{\E}\Big[\sup_{0\leq s\leq T}\|\ou^n(s)\|_{\Ll}^{2r}\Big]+\overline{\E}\Big[\sup_{0\leq s\leq T}\| u^*(s)\|_{\Ll}^{2r})\Big]\Big)\notag\\
&\leq \tilde{c}_r
\end{align} for some constant $\tilde{c}_r>0$ (depending upon $r$).
Thus by \eqref{esti.H.lim} and \eqref{esti.H.uni} and by Vitali's theorem for every $t\in [0,T],\,\,\forall\,\, v \in \h(\mathcal{O}),$
\begin{equation}
\lim_{n\rightarrow\infty}\overline{\E}[\int_0^t \int_Z |(H(\ou^n(s),z)-H(u^*(s),z),v)_{\Ll}|^2 \lambda(dz)ds]=0.
\end{equation}
Since the restriction of $P_n$ to $\Ll(\mathcal{O}),$ is the $(\cdot,\cdot)_{\Ll}-$orthogonal projection onto $\Ll_n(\mathcal{O}),$ we conclude $\forall\,\, v \in \Ll(\mathcal{O}),$
\begin{align} \label{eq.H}
&\lim_{n\rightarrow\infty}\overline{\E}[\int_0^t \int_Z |(H^n(\ou^n(s),z)-H(u^*(s),z),v)_{\Ll}|^2 \lambda(dz)ds]=0.
\end{align}
Since $\h(\mathcal{O}) \subset \Ll(\mathcal{O}),$ \eqref{eq.H} holds for all $v \in \h(\mathcal{O}).$
\par\noindent
Moreover, Assumption \ref{Hyp} and \eqref{ineq1} yield the following inequality
\begin{align} \label{ener4.g}
&\oE\Big[\mathlarger{\int_0^t} \mathlarger{\int_Z} \Big| (H^n(\ou^n(s),z)-H(u^*(s),z),v)_{\Ll} \Big|^2 \lambda(\d z)\d s \Big]
\leq \tilde{C}_2.
\end{align}
Now \eqref{eq.H}, \eqref{ener4.g} and the Dominated Convergence Theorem assures assertion (viii).

\item[(ix)]
Employing $\on^n=N^*$, It\^o-L\'evy isometry and \eqref{eq.H}, we have $\forall\, v \in \h(\mathcal{O})$ 
\begin{align}\label{0006}
&\lim_{n\rightarrow\infty}\overline{\E}\left[\left|\int_0^t \int_Z(H^n(\ou^n(s-),z)-H(u^*(s-),z),v)_{\Ll}\tilde{\on^n}(ds,dz)\right|^2\right]\nonumber\\
&=\lim_{n\rightarrow\infty}\overline{\E}\left[\left|\int_0^t \int_Z(H^n(\ou^n(s-),z)-H(u^*(s-),z),v)_{\Ll}\tilde{N}^*(ds,dz)\right|^2\right]\nonumber\\
&=\lim_{n\rightarrow\infty}\overline{\E}[\int_0^t \int_Z |(H^n(\ou^n(s),z)-H(u^*(s),z),v)_{\Ll}|^2 \lambda(dz)ds] = 0. 
\end{align}
Moreover from \eqref{ener4.g}, we have $\forall\, v \in \h(\mathcal{O})$ 
\begin{align}\label{0007}
&\oE \left[\left|\mathlarger{\int_0^t} \mathlarger{\int_Z} (H^n(\ou^n(s-),z)-H(u^*(s-),z),v)_{\Ll}\tilde{N}^*(\d s,\d z) \right|^2\right]\leq 
\tilde{C}_2.
\end{align}
Hence by \eqref{0006}, \eqref{0007} and the Dominated convergence theorem we obtain (ix).}

\gr{\item[(x)] 
Since $\oz^n\rightarrow z^*$ in $C([0,T];H^{-1}(\mathcal{O}))\,\,\overline{P}-$a.s., we see that $\overline{P}$-a.s. and for all $t\in[0,T]$
\begin{align} \label{eq.z.1}
&|\langle \oz^n(t)-z^*(t),w\rangle | \leq \|\oz^n(t)-z^*(t)\|_{H^{-1}} \|w\|_{\Hh}\notag\\& \leq \|w\|_{\Hh} \sup_{t \in [0,T]}\|\oz^n(t)-z^*(t)\|_{H^{-1}} \rightarrow 0 \quad \mbox{as} \quad n \rightarrow \infty.
\end{align}
Also from \eqref{ineq3} we get
\begin{align} \label{eq.z.2}
\oE[\int_0^T |\langle \oz^n(t),w \rangle|^2 dt]&\leq \|w\|_{\Hh}^2 \oE[\int_0^T \|\oz^n(t)\|_{H^{-1}}^2 dt]\notag\\
&\leq c\, \|w\|_{\Hh}^2\,T\,\oE[\sup_{0\leq t\leq T} \|\oz^n(t)\|_{L^2}^2 ] \leq C.
\end{align}
By \eqref{eq.z.1}, \eqref{eq.z.2} and then by the Vitali theorem we have (x).}


\gr{
\item[(xi)]
Since $\oz^n\rightarrow z^*$ in $C([0,T];H^{-1}(\mathcal{O}))\,\,\overline{P}-$a.s., using similar arguments as in \eqref{eq.z.1} in particular for $t=0$ we have $\overline{P}$-a.s.
\begin{equation} \label{est.z0.1}
\langle \oz^n_0,w\rangle \rightarrow \langle z^*_0,w \rangle.
\end{equation}
Hence  by \eqref{ineq3} we achieve
\begin{align} \label{est.z0.2}
\oE[|\langle \oz^n_0,w \rangle|^2] \leq c\, \|w\|^2_{\Hh} \oE[\|\oz_0^n\|_{L^2}^2] \leq c\, \|w\|^2_{\Hh} \oE[\|\oz_0\|_{L^2}^2] \leq C.
\end{align}
Then by \eqref{est.z0.1} and \eqref{est.z0.2} and using Vitali theorem
\begin{equation*}
\lim_{n\rightarrow\infty}\overline{\E}[|(\oz^n_0-z^*_0,w)_{L^2}|]=0.
\end{equation*}}
\gr{
\item[(xii)]
Since $\ou^n\rightarrow u^*$ in $L_{w}^2(0,T;\h(\mathcal{O}))\,\,\overline{P}-$a.s., so for any $\tilde{\phi} \in L^2(0,T;L^2(\mathcal{O}))$ we have $\overline{P}-$a.s.,
\begin{align} \label{esti.z}
\lim_{n\rightarrow\infty}\int_0^T (Div(h\ou^n(s)),\tilde{\phi}(s))_{L^2}ds
=\int_0^T (Div(hu^*(s)),\tilde{\phi}(s))_{L^2}ds.
\end{align}
Let $t \in [0,T]$ be fixed. Let us choose $\tilde{\phi}(s)=\chi_{(0,t)}(s)w$  in \eqref{esti.z}. Also we note that $\tilde{\phi} \in L^2(0,T;L^2(\mathcal{O})).$ Hence,
\begin{align} \label{esti.div.z}
\lim_{n\rightarrow\infty}\int_0^t (Div(h\ou^n(s)),w)_{L^2}ds
=\int_0^t (Div(hu^*(s)),w)_{L^2}ds.
\end{align}
By \eqref{ineq2}, Minkowskii and H\"older inequalities, and by \eqref{bddh} we have 
\begin{align} \label{esti.vit.z}
&\overline{\E}[|\int_0^t (Div(h\ou^n(s)),w)_{L^2} ds|^2]\leq T\,\|w\|_{L^2}^2\overline{\E}[\int_0^t \|Div(h\ou^n(s))\|_{L^2}^2 ds]\notag\\
&\quad\leq c\,T\,\|w\|_{L^2}^2\overline{\E}[\int_0^t \left(\|h \ Div\ou^n(s)\||_{L^2}^2+\|\nabla h\cdot\ou^n(s)\|_{L^2}^2\right)ds]\notag\\
&\quad\leq c\,T\,\|w\|_{\Hh}^2\overline{\E}[\int_0^t \left(\|h\|_{L^{\infty}}^2 \|\nabla\ou^n(s)\|_{\Ll}^2+\|\nabla h\|_{\mathbb L^{\infty}}^2\|\ou^n(s)\|_{\Ll}^2\right)ds]\notag\\
&\quad\leq c\,T\,\|w\|_{\Hh}^2(\mu^2\overline{\E}\int_0^t\|\ou^n(s)\|_{\h}^2 ds + M^2 T\overline{\E}\sup_{0\leq t\leq T} \|\ou^n(s)\|_{\Ll}^2)\leq C.
\end{align}
Therefore by \eqref{esti.div.z} and \eqref{esti.vit.z} and by Vitali theorem, for all $t\in [0,T],$
\begin{equation} \label{lim.z}
\lim_{n\rightarrow\infty}\overline{\E}[|\int_0^t \Big((Div(h\ou^n(s)),w)_{L^2}-(Div(hu^*(s)),w)_{L^2}\Big) ds|]=0.
\end{equation}
Also repeating the same arguments as for \eqref{esti.vit.z} we achieve,
\begin{align} \label{vit.z}
\int_0^T|\overline{\E}[\int_0^t (Div(h\ou^n(s)),w)_{L^2} ds]|^2dt
\leq C\,T.
\end{align}
Hence by \eqref{lim.z} and \eqref{vit.z} and by the virtue of Vitali theorem, we have
\begin{equation*}
\lim_{n\rightarrow\infty}\int_0^T \overline{\E}[|\int_0^t \Big((Div(h\ou^n(s)),w)_{L^2}-(Div(hu^*(s)),w)_{L^2}\Big) ds|]=0.
\end{equation*}}
\end{itemize}
\end{proof}

\textbf{Step III}:

Define for all $v\in\h(\mathcal{O})$
\gr{\begin{align} \label{K1.n}
K^1(\ou^n,\oz^n,\overline{U}^n,\on^n,\ow^n,v)(t)&=(\ou^n_0,v)_{\Ll}+(\overline{U}^n,v)_{\Ll}-\int_0^t (A\ou^n(s),v)_{\Ll}ds\notag\\
&\quad-\int_0^t (B(\ou^n(s)),v)_{\Ll}ds-\gr{\int_0^t \langle g\nabla \oz^n(s),v \rangle ds}\notag\\
&\quad+\int_0^t (\s^n(s,\ou^n(s))d\ow^n(s),v)_{\Ll}+\gr{\int_0^t(f(s),v)_{\Ll}ds} \notag\\
&\quad+\int_0^t\int_Z (H^n(\ou^n(s-),z),v)_{\Ll}\tilde{\on}^n(ds,dz),
\end{align}}
and for all \gr{$w\in \Hh(\mathcal{O})$}
\begin{align} \label{K2.n}
K^2(\ou^n,\oz^n,w)(t)=(\oz^n_0,w)_{L^2}-\int_0^t (Div(h\ou^n(s)),w)_{L^2} ds.
\end{align}
Hence for all $v\in\h(\mathcal{O})$
\gr{\begin{align} \label{K1.u.star}
K^1(u^*,z^*,U^*,N^*,W^*,v)(t)&=(u^*_0,v)_{\Ll}+(U^*,v)_{\Ll}-\int_0^t (Au^*(s),v)_{\Ll}ds\notag\\
&\quad-\int_0^t (B(u^*(s)),v)_{\Ll}ds-\gr{\int_0^t \langle g\nabla z^*(s),v \rangle ds}\notag\\
&\quad+\int_0^t (\s(s,u^*(s))dW^*(s),v)_{\Ll}+\int_0^t(f(s),v)_{\Ll}ds \notag\\
&\quad+\int_0^t\int_Z (H(u^*(s-),z),v)_{\Ll}\tilde{N}^*(ds,dz),
\end{align}}
and for all \gr{$w\in \Hh(\mathcal{O})$}
\begin{align} \label{K2.u.star}
K^2(u^*,z^*,w)(t)=(z^*_0,w)_{L^2}-\int_0^t (Div(hu^*(s)),w)_{L^2} ds.
\end{align}
\begin{claim}  
\begin{itemize}
\item[1.] For all $v\in\h(\mathcal{O})$
\begin{equation}
\label{pf1}
\lim_{n\rightarrow\infty}\|(\ou^n(\cdot),v)_{\Ll}-(u^*(\cdot),v)_{\Ll}\|_{L^1([0,T]\times \overline{\Omega})}=0,
\end{equation}
and
\begin{equation}
\label{pf2}
\lim_{n\rightarrow\infty}\|K^1(\ou^n,\oz^n,\overline{U}^n,\on^n,\ow^n,v)-K^1(u^*,z^*,U^*,N^*,W^*,v)\|_{L^1([0,T]\times \overline{\Omega})}=0.
\end{equation}
\item[2.]  For all \gr{$w\in \Hh(\mathcal{O})$}
\begin{equation}
\label{pf3}
\lim_{n\rightarrow\infty}\|(\oz^n(\cdot),w)_{L^2}-(z^*(\cdot),w)_{L^2}\|_{L^1([0,T]\times \overline{\Omega})}=0,
\end{equation}
and 
\begin{equation}
\label{pf4}
\lim_{n\rightarrow\infty}\|K^2(\ou^n,\oz^n,w)-K^2(u^*,z^*,w)\|_{L^1([0,T]\times \overline{\Omega})}=0.
\end{equation}
\end{itemize}
\end{claim}

\begin{pf}
\end{pf}
\begin{itemize}
\gr{\item[1.] We note that \eqref{pf1} follows from Lemma \ref{conv.each} (i). Now we see by Fubini's theorem,}
 \begin{align} \label{k1}
&\|K^1(\ou^n,\oz^n,\overline{U}^n,\on^n,\ow^n,v)-K^1(u^*,z^*,U^*,N^*,W^*,v)\|_{L^1([0,T]\times \overline{\Omega})}\notag\\
&=\overline{\E}[\int_0^T|K^1(\ou^n,\oz^n,\overline{U}^n,\on^n,\ow^n,v)-K^1(u^*,z^*,U^*,N^*,W^*,v)| dt]\notag\\
&=\int_0^T \overline{\E}[|K^1(\ou^n,\oz^n,\overline{U}^n,\on^n,\ow^n,v)-K^1(u^*,z^*,U^*,N^*,W^*,v)|] dt.
\end{align}
\gr{Lemma \ref{conv.each} (ii)-(ix) ensure that each term on the right hand side of \eqref{K1.n} converges to the right hand side of corresponding term in \eqref{K1.u.star} in $L^1([0,T]\times \overline{\Omega})$ which further assures that right hand side of \eqref{k1} goes to zero as $n \rightarrow \infty.$ This verifies \eqref{pf2}. 
\item[2.] \gr{We note that \eqref{pf3} follows from Lemma \ref{conv.each} (x).} Now we see by Fubini's theorem,}
\begin{align} \label{k2}
&\|K^2(\ou^n,\oz^n,w)-K^2(u^*,z^*,w)\|_{L^1([0,T]\times \overline{\Omega})}\notag\\
&=\overline{\E}[\int_0^T|K^2(\ou^n,\oz^n,w)-K^2(u^*,z^*,w)| dt]\notag\\
&=\int_0^T \overline{\E}[|K^2(\ou^n,\oz^n,w)-K^2(u^*,z^*,w)|] dt.
\end{align}
\gr{Lemma \ref{conv.each} (xi)-(xii) ensure that each term on the right hand side of \eqref{K2.n} converges to the right hand side of corresponding term in \eqref{K2.u.star} in $L^1([0,T]\times \overline{\Omega})$, which further assures that right hand side of \eqref{k2} goes to zero as $n \rightarrow \infty.$ This verifies \eqref{pf4}.}
Hence our claim is established.
\end{itemize}

\textbf{Step IV}:
Since $u^n$ is a solution of the Galerkin equation, we have for all $t\in [0,T]$
\begin{equation*}
(u^n(t),v)_{\Ll}=K^1(u^n,\z^n, U^n,N^n,W^n,v)(t)\qquad P\text{-a.s.}
\end{equation*}
Hence
\begin{equation} \label{eq.un.kn}
\int_0^T \E[|(u^n(t),v)_{\Ll}-K^1(u^n,\z^n, U^n,N^n,W^n,v)(t)|]dt=0.
\end{equation}
Since $\mathcal{L}((\ou^n,\oz^n,\overline{U}^n,\on^n,\ow^n))=\mathcal{L}((u^n,\z^n,U^n,N^n,W^n))$
\begin{equation*}
\int_0^T \overline{\E}[|(\ou^n(t),v)_{\Ll}-K^1(\ou^n,\oz^n,\overline{U}^n,\on^n,\ow^n,v)(t)|]dt=0.
\end{equation*}
\gr{Using \eqref{pf1}, \eqref{eq.un.kn} and \eqref{pf2}
\begin{IEEEeqnarray*}{lrl}
\int_0^T \overline{\E}[|(u^*(t),v)_{\Ll}-K^1(u^*,z^*,U^*,N^*,W^*,v)(t)|]dt\\
\leq \int_0^T \overline{\E}[|(u^*(t),v)_{\Ll}-(\ou^n(t),v)_{\Ll}|]dt \\
+\int_0^T \overline{\E}[|(\ou^n(t),v)_{\Ll}-K^1(\ou^n,\oz^n,\overline{U}^n,\on^n,\ow^n,v)(t)|]dt \\
+ \int_0^T \overline{\E}[|K^1(\ou^n,\oz^n,\overline{U}^n,\on^n,\ow^n,v)(t)-K^1(u^*,z^*,U^*,N^*,W^*,v)(t)|]dt\\
\rightarrow 0\,\,\mbox{as}\,\, n \rightarrow \infty.
\end{IEEEeqnarray*}}
Hence for \gr{almost all $t\in [0,T]$}
\begin{equation*}
(u^*(t),v)_{\Ll}-K^1(u^*,z^*,U^*,N^*,W^*,v)(t)=0 \qquad \overline{P}\text{-a.s.}
\end{equation*}
Similarly we get for \gr{almost all $t\in [0,T]$}
\begin{equation*}
(z^*(t),w)_{\Ll}-K^2(u^*,z^*,w)(t)=0 \qquad \overline{P}\text{-a.s.}
\end{equation*}
Taking $\ou=u^*,\oz=z^*,\overline{U}=U^*,\on=N^*$and $\ow=W^*$. we see that $(\overline{\Omega},\overline{\mathcal{F}},\overline{F},\overline{P},\overline{u},\overline{z},\overline{U},\overline{N},\overline{W})$ is a martingale solution of \eqref{sc1}-\eqref{sc3}.
\end{proof}
Existence of martingale solution (see Theorem \ref{martexist}) and pathwise uniqueness (see Step V of the proof of Theorem \ref{thmint1}) guarantee uniqueness in law due to the classical Yamada-Watanabe technique \cite{YW} (see also Theorems 2 and 11 in Ondrej\'at \cite{On} for the infinite dimensional version of the result).
\subsection{Existence of Optimal Control}
This Subsection has been developed using certain ideas from Brze{\'z}niak and Serrano \cite{BS}, Sritharan \cite{sritharan2000deterministic}.
Let $(\Omega,\mathcal{F},\mathcal{F}_t,P)$ be a given filtered probability space. The objective of this subsection is to study optimal initial  value control problem \eqref{sc1}-\eqref{sc3} of minimizing a finite-horizon cost functional of the form 
\begin{equation}\label{costgen}
\mJ(u,\z,U)=\E\left[\int_0^T \int_{\mathcal{O}}L(t,u(t,x),\z(t,x),U(x))dx\,dt\right].
\end{equation} 
\begin{definition} \label{defi.cost}
A function $\kappa :\Ll(\mathcal{O}) \rightarrow [0,+\infty] $ is called inf-compact iff for every $R \geq 0$ the level set 
$ \{\kappa \leq R\} $ is compact.
\end{definition}
\begin{assumption}\label{ass.cost}
Let the running cost function $L(\cdot,\cdot,\cdot,\cdot,\cdot):[0,T]\times\h(\mathcal{O})\times L^2(\mathcal{O})\times\Ll(\mathcal{O})\times \Omega\rightarrow\mathbb{R}$ be such that   
\begin{enumerate}
\item[(i)] $L(\cdot,\cdot,\cdot,\cdot, \cdot)$ is measurable,
\item[(ii)] $L(t,\cdot,\cdot,\cdot,\omega):\h(\mathcal{O})_w\times L^2(\mathcal{O})\times\Ll(\mathcal{O})$ is lower semicontinuous $\forall t\in [0,T]$ and $w\in \Omega$, where $\h(\mathcal{O})_w$ is the space $\h(\mathcal{O})$ endowed with the weak topology.
\item[(iii)] There exists an inf-compact function $\kappa :\Ll(\mathcal{O}) \rightarrow [0,+\infty]$ such that for every $(t, u,\z,U)\in [0,T]\times\h(\mathcal{O})\times L^2(\mathcal{O})\times\Ll(\mathcal{O})$, $L(t,u,\z,U)\geq \kappa^2(U)$, $P$-a.s., and $\|\kappa(U)\|_{L^2(\Omega; \Ll)} \rightarrow \infty$ as $\|U\|_{L^2(\Omega; \Ll)}$ $\rightarrow \infty$.
\end{enumerate}
\end{assumption}
A specific example of cost functional is as follows:
\begin{equation}\label{excost}
\mJ(u,\z,U)=\E\left[\int_0^T \left(\|u(s)\|_{\h}^2 + \|\z(s)\|_{L^2}^2\right)ds\right] +\E \|U\|_{\Ll}^2.
\end{equation}

\gr{\begin{definition} \label{defi.opt}
Let $u_0\in L^2(\Omega; \Ll(\mathcal{O}))$ and $\z_0\in L^2(\Omega; L^2(\mathcal{O}))$ and let $T>0$ be fixed. A weak admissible control (with time horizon $[0,T]$) is a system 
\begin{align} \label{form.pi}
\pi=(\Omega,\mathcal{F},\mathcal{F}_t,P,\{W(t)\}_{t \geq 0},\{N(t,\cdot)\}_{t \geq 0},\{u(t)\}_{t \geq 0},\{\z(t)\}_{t \geq 0},U)
\end{align} such that 
\begin{itemize}
\item[1.] $(\Omega,\mathcal{F},F,P)$ is a filtered probability space with a filtration $F=\{\mathcal{F}_t\}_{t\geq 0}$,
\item[2.] $N$ is a time homogeneous Poisson random measure over $(\Omega,\mathcal{F},F,P)$ with the intensity measure $\lambda$,
\item[3.] $W$ is a cylindrical Wiener process over $(\Omega,\mathcal{F},F,P)$,
\item[4.] $U$ is measurable with $P$- a.e. $\omega \in\Omega,$ $U(\omega)\in \Ll(\mathcal{O})$,
\item[5.] \gr{$u, z$ are progressively measurable processes with $P$- a.e. $\omega \in \Omega,$ the paths
\begin{align*}
&u(\cdot, \omega)\in D([0,T];\mathbb{H}^{-1}(\mathcal{O}))\cap D([0,T];\Ll_w(\mathcal{O})) \cap L^2_w(0,T;\h(\mathcal{O})) \cap L^2(0,T;\Ll(\mathcal{O})),\\
&\z(\cdot, \omega)\in L^2(0,T;L^2(\mathcal{O})) \cap C([0,T];H^{-1}(\mathcal{O}))
\end{align*}}
 such that for all $t\in [0, T]$, for all $v\in \h(\mathcal{O})$ and for all $w \in L^2(\mathcal{O})$, the following identities hold $P$-a.s.
\begin{IEEEeqnarray}{llr}
(&u(t),v)_{\Ll}+\int_0^t (Au(s),v)_{\Ll}ds+\int_0^t(B(u(s)),v)_{\Ll}ds+\int_0^t\gr{\langle g\nabla z(s) ,v \rangle} ds&\nonumber\\
&\quad=(u_0,v)_{\Ll}+(U,v)_{\Ll}+\int_0^t(f(s),v)_{\Ll}ds\nonumber\\
&\quad\quad+\int_0^t(\s(s,u(s))d W(s),v)_{\Ll}
+\int_0^t\int_Z (H(u(s-),z),v)_{\Ll} \tilde{N}(ds,dz) ,\\
&(\z(t),w)_{L^2}+\int_0^t (Div(hu(s)),w)_{L^2} ds=(\z_0,w)_{L^2},
\end{IEEEeqnarray} 
\item[6.] the mapping $$[0,T]\times\mathcal{O}\times \Omega \ni (t,x,\omega)\mapsto L(t,u(t,x,\omega),\z(t,x,\omega),U(x,\omega)) \in \mathbb{R}$$ belongs to $L^1([0,T]\times\mathcal{O}\times\Omega;\mathbb{R}).$
\end{itemize}
\end{definition}}
\ada{ In the spirit of Definition \ref{defi.mart}, from 1-4 of Definition \ref{defi.opt}, we note that $\pi$ is a martingale solution of \eqref{sc1}-\eqref{sc3} associated with \eqref{costgen}.}
The set of weak admissible controls (with time horizon $[0,T]$) will be denoted by $\bar{\mathcal{U}}^{w}_{ad}(u_0,\z_0,T).$ 
In this context, under this weak formulation, the cost functional is defined as
\begin{align} \label{form.barJ}
J(\pi):= \mathbb{E}\Big[\int_{0}^{T} \int_{\mathcal{O}}L(t,u,\z,U)dx\,dt \Big],\quad \pi \in \bar{\mathcal{U}}^{w}_{ad}(u_0,\z_0,T)
\end{align}
where $\pi$ has the form \eqref{form.pi} and $(u,\z,U)$ are the components of $\pi.$
\begin{remark} 
The optimal control problem is to minimize $J$ over $\bar{\mathcal{U}}^{w}_{ad}(u_0,\z_0,T)$ for $u_0\in L^2(\Omega; \Ll(\mathcal{O}))$ and $\z_0\in L^2(\Omega; L^2(\mathcal{O}))$ and $T>0$ be fixed. Namely, we seek $\tilde{\pi}\in \bar{\mathcal{U}}^{w}_{ad}(u_0,\z_0,T)$ such that
$$ J(\tilde{\pi})=\inf_{\pi \in \bar{\mathcal{U}}^{w}_{ad}(u_0,\z_0,T)} J(\pi).$$
\end{remark}

We now prove Theorem \ref{main2.1}, the main result in this Subsection which guarantees the existence of weak optimal control. 
\vskip.1in
\noindent
{\textbf{Proof of Theorem \ref{main2.1}:}
\begin{proof}
\textbf{Step I}:\,\,
From the assumptions of the Theorem  \ref{main2.1}, we have $u_0\in L^2(\Omega; \Ll(\mathcal{O}))$ and $\z_0\in L^2(\Omega; L^2(\mathcal{O}))$ and $J(\pi)<+\infty.$  Since $J$ is bounded below by zero on $\bar{\mathcal{U}}^{w}_{ad}(u_0,\z_0,T)$, there exists a minimizing sequence $\{\pi^n\}_{n \geq 1}$ such that $J(\pi^n) \rightarrow \inf_{\pi \in \bar{\mathcal{U}}^{w}_{ad}(u_0,\z_0,T)} J(\pi).$ In other words, $\pi^n=(\Omega^n,\mathcal{F}^n,\mathcal{F}^n_t,P^n, \{W^n(t)\}_{t \geq 0},\{N^n(t,\cdot)\}_{t \geq 0},\{u^n(t)\}_{t \geq 0},\{\z^n(t)\}_{t \geq 0},U^n)$ is a minimizing sequence of weak admissible controls, that is,
\begin{align} \label{J.pi}
\lim_{n \rightarrow \infty} J(\pi^n)= \inf_{\pi \in \bar{\mathcal{U}}^{w}_{ad}(u_0,\z_0,T)} J(\pi).
\end{align}
\gr{Since for each $n \in \mathbb{N},$ $\pi^n \in \bar{\mathcal{U}}^{w}_{ad}(u_0,\z_0,T),$ hence each $\pi^n$ satisfies   
\begin{IEEEeqnarray}{llr}
(&u^n(t),v)_{\Ll}+\int_0^t (Au^n(s),v)_{\Ll}ds+\int_0^t(B(u^n(s)),v)_{\Ll}ds+\int_0^t\gr{\langle g\nabla \z^n(s) ,v \rangle} ds&\nonumber\\
&\ =(u_0,v)_{\Ll}+(U^n,v)_{\Ll}+\int_0^t(f(s),v)_{\Ll}ds\nonumber\\
&\quad+\int_0^t(\s(s,u(s))d W^n(s),v)_{\Ll}
+\int_0^t\int_Z (H(u(s-),z),v)_{\Ll} \tilde{N}^n(ds,dz) ,\\
&(\z^n(t),w)_{L^2}+\int_0^t (Div(hu^n(s)),w)_{L^2} ds=(\z_0,w)_{L^2}.
\end{IEEEeqnarray}   
This holds $P^n$-a.s. for almost all $t\in [0, T]$, for all $v\in \h(\mathcal{O})$ and for all $w \in L^2(\mathcal{O}).$ Proceeding as in similar lines as in Propositions \ref{prop}-\ref{moment}, we have the following a-priori estimates (uniformly in $n$)
\begin{align}
&\sup_{n\geq 1}\E^n[\sup_{0\leq s\leq T}\|u^n(s)\|_{\Ll}^2]\leq C,\quad\sup_{n\geq 1}\E^n[\int_0^T\|u^n(s)\|_{\h}^2 ds]\leq C,\label{en.eq}\\
&\mbox{and}\quad \sup_{n\geq 1}\E^n[\sup_{0\leq s\leq T}\|\z^n(s)\|_{L^2}^2]\leq C,\label{en.eq01}
\end{align}where $\mathbb{E}^n$ denotes the expectation with respect to $P^n.$
Also we note from \eqref{J.pi} that there exists a constant $C>0$ such that $J(\pi^n) \leq C.$ 
Using Assumption \ref{ass.cost} (iii), \eqref{costgen} we have
\begin{align} \label{esti.kap}
\|\kappa(U^n)\|_{L^2(\Omega; \Ll)}^2&=\frac{1}{T}~\mathbb{E}^n\Big[\int_{0}^{T} \int_{\mathcal{O}} \kappa^2(U^n)dx\,dt \Big] \leq \frac{1}{T}~\mathbb{E}^n\Big[\int_{0}^{T} \int_{\mathcal{O}}L(t,u^n,\z^n,U^n)dx\,dt \Big]\notag\\&=\frac{1}{T}~ J(\pi^n) \leq C_T.
\end{align} 
In view of \eqref{esti.kap} and Assumption \ref{ass.cost} (iii) we have 
\begin{align}\label{est.cont01}
\sup\limits_{n\geq 1}\E^n[\|U^n\|_{\Ll}^2]\leq C_T.
\end{align}
Due to the uniform a-priori bounds \eqref{en.eq}, one can establish that $\{\mathcal{L}(u^n),n\in\mathbb{N}\}$ is tight on $(\mathcal{Z},\tau)$ by first proving the Aldous condition (as in Lemma \ref{tight}) and then employing Theorem \ref{sdcthm1}. Similarly, due to uniform bounds \eqref{en.eq01} and \eqref{est.cont01}, we can prove (as in Lemmas \ref{tight2}-\ref{tight3}) that the set of measures $\{\mathcal{L}(\z^n,U^n),n\in\mathbb{N}\}$ is tight on $\Big(L^2(0,T;L^2(\mathcal{O}))\cap C([0,T]; H^{-1}(\mathcal{O}))\Big) \times \Ll(\mathcal{O}).$
\gr{Therefore by the Skorokhod theorem}, there exists a subsequence $(n_k)_{k\in\mathbb{N}}$, a probability space $(\tilde{\Omega},\tilde{\mathcal{F}},\tilde{P})$, and, on this space, random variables $(\tilde{u},\tilde{\z},\tilde{U},\breve{N},\tilde{W}),$ $(\tilde{u}^k,\tilde{\z}^k,\tilde{U}^k, \breve{N}^k,\tilde{W}^k)_{k \in \mathbb{N}}$ such that
\begin{itemize}
\item[(i)] $\mathcal{L}((\tilde{u}^k,\tilde{\z}^k,\tilde{U}^k,\breve{N}^k,\tilde{W}^k))=\mathcal{L}((u^{n_k},\z^{n_k},U^{n_k},N^{n_{k}},W^{n_k}))$ for all $k\in\mathbb{N}$,
\item[(ii)]$(\tilde{u}^k,\tilde{\z}^k,\tilde{U}^k, \breve{N}^k,\tilde{W}^k) \rightarrow (\tilde{u},\tilde{\z},\tilde{U},\breve{N},\tilde{W})$ in \gr{$\mathcal{Z} \times \Big(L^2(0,T;L^2(\mathcal{O}))\cap C([0,T]; H^{-1}(\mathcal{O}))\Big) \times \Ll(\mathcal{O}) \times M_{\bar{\mathbb{N}}}([0,T]\times Z) \times C([0,T];\mathbb{R})$} with probability 1 on $(\tilde{\Omega},\tilde{\mathcal{F}},\tilde{P})$ as $k\rightarrow\infty$,
\item[(iii)] $(\breve{N}^k(\tilde{\omega}),\tilde{W}^k(\tilde{\omega})) = (\breve{N}(\tilde{\omega}),\tilde{W}(\tilde{\omega}))$ for all $\tilde{\omega}\in \tilde{\Omega}$.
\end{itemize}}

\noindent\ada{For convenience, we denote these sequences again by $((u^n,\z^n,U^n,N^n,W^n))_{n\in\mathbb{N}}$ and 
$((\tilde{u}^n,\tilde{\z}^n,\tilde{U}^n, \\ \breve{N}^n,\tilde{W}^n))_{n\in\mathbb{N}}$. Now, following the similar steps as in Theorem \ref{martexist}}, one can establish that $$\tilde{\pi}=(\tilde{\Omega},\tilde{\mathcal{F}},\tilde{\mathcal{F}_t},\tilde{P},\{\tilde{W}(t)\}_{t \geq 0},\{\breve{N}(t,\cdot)\}_{t \geq 0},\{\tilde{u}(t)\}_{t \geq 0},\{\tilde{\z}(t)\}_{t \geq 0},\tilde{U})$$ is a martingale solution of \eqref{sc1}-\eqref{sc3}  associated to \eqref{costgen}. 
\vskip.1in\noindent
\textbf{Step II}:\\ We now prove that the cost functional is lower semicontinuous. Below we extend the proof of Lemma 10 and Lemma 11 of Sritharan \cite{sritharan2000deterministic} to the stochastic case.
\begin{lemma}
\label{lowersemcont}
For $\tilde{u}^n\rightarrow \tilde{u}$ in the $L^2(\tilde{\Omega};\gr{\mathcal{Z}}),$ $\tilde{\z}^n\rightarrow \tilde{\z}$ in $L^2(\tilde{\Omega};L^2(0,T;\gr{L^2(\mathcal{O}))}$ and $\tilde{U}^n\rightarrow \tilde{U}$ in $L^2(\tilde{\Omega};\gr{\Ll(\mathcal{O}))}$, we have 
\begin{equation}
\liminf\limits_{n\rightarrow\infty}\tE \left[\int_0^T\int_{\mathcal{O}}L(t,\tilde{u}^n,\tilde{\z}^n,\tilde{U}^n)dx\,dt\right]\geq\tE\left[\int_0^T\int_{\mathcal{O}}L(t,\tilde{u},\tilde{\z},\tilde{U})dx\,dt\right].
\end{equation}
\end{lemma}
\begin{proof}
For each natural number $M,$ let $L_M(t,\tilde{u},\tilde{\z},\tilde{U})=L(t,\tilde{u},\tilde{\z},\tilde{U})\wedge M$. Then by linearity of expectation, we have
\begin{align*}
&\liminf\limits_{n\rightarrow\infty}\tE \left[\int_0^T\int_{\mathcal{O}}L(t,\tilde{u}^n,\tilde{\z}^n,\tilde{U}^n)dx\,dt\right]\\
&\quad\geq\liminf\limits_{n\rightarrow\infty}\tE \left[\int_0^T\int_{\mathcal{O}}L_M(t,\tilde{u}^n,\tilde{\z}^n,\tilde{U}^n)dx\,dt\right]\\
&\quad\geq -\limsup\limits_{n\rightarrow\infty}\tE \left[\int_0^T\int_{\mathcal{O}}\left(L_M(t,\tilde{u}^n,\tilde{\z}^n,\tilde{U}^n)-L_M(t,\tilde{u},\tilde{\z}^n,\tilde{U}^n)\right)^- dx\,dt\right]\\
&\quad\quad-\limsup\limits_{n\rightarrow\infty}\tE \left[\int_0^T\int_{\mathcal{O}}\left(L_M(t,\tilde{u},\tilde{\z}^n,\tilde{U}^n)-L_M(t,\tilde{u},\tilde{\z},\tilde{U}^n)\right)^- dx\,dt\right]\\
&\quad\quad+\liminf\limits_{n\rightarrow\infty}\tE \left[\int_0^T\int_{\mathcal{O}}L_M(t,\tilde{u},\tilde{\z},\tilde{U}^n)dx\,dt\right],
\end{align*}
where $f(x)^-=(-f(x))\vee 0$. The first and second terms on the right-hand side are zero due to Lemma \ref{lowersemi2} given below. Due to the lower semicontinuity of $L_M,$ from Jacod and M{\'e}min \cite{jacod}, Lemma 4 of Sritharan \cite{sritharan2000deterministic}, Proposition 2.1.12 of Castaing et al. \cite{CFV}  we have
\begin{equation*}
\liminf\limits_{n\rightarrow\infty} \left[\int_0^T\int_{\mathcal{O}}L_M(t,\tilde{u},\tilde{\z},\tilde{U}^n)dx\,dt\right]\geq \left[\int_0^T\int_{\mathcal{O}}L_M(t,\tilde{u},\tilde{\z},\tilde{U})dx\,dt\right].
\end{equation*}
 Therefore application of Fatou's Lemma gives
 \begin{equation*}
\liminf\limits_{n\rightarrow\infty} \tE \left[\int_0^T\int_{\mathcal{O}}L_M(t,\tilde{u},\tilde{\z},\tilde{U}^n)dx\,dt\right]\geq \tE \left[\int_0^T\int_{\mathcal{O}}L_M(t,\tilde{u},\tilde{\z},\tilde{U})dx\,dt\right].
\end{equation*}
 Hence using the Beppo-Levi theorem on the bounded measurable functions $L_M$ we have
\begin{align*}
\liminf\limits_{n\rightarrow\infty}\tE \left[\int_0^T\int_{\mathcal{O}}L(t,\tilde{u}^n,\tilde{\z}^n,\tilde{U}^n)dx\,dt\right]\geq \tE\left[\int_0^T\int_{\mathcal{O}}L_M(t,\tilde{u},\tilde{\z},\tilde{U})dx\,dt\right]\\
\geq \tE \left[\int_0^T\int_{\mathcal{O}}L(t,\tilde{u},\tilde{\z},\tilde{U})dx\,dt\right].
\end{align*}
\end{proof}

\begin{lemma} \label{lowersemi2}
Let $\tilde{u}^n\rightarrow \tilde{u}$ in the $L^2(\tilde{\Omega};\gr{\mathcal{Z})},$ $\tilde{\z}^n\rightarrow \tilde{\z}$ in $L^2(\tilde{\Omega};\gr{L^2}(0,T;L^2(\mathcal{O}))$ and $\tilde{U}^n\rightarrow \tilde{U}$ in $L^2(\tilde{\Omega};\gr{\Ll(\mathcal{O})})$. For $\tilde{\omega}\in\tilde{\Omega}$, let $\varphi(\cdot,\cdot,\cdot,\cdot,\tilde{\omega}):[0,T]\times\h(\mathcal{O})\times L^2(\mathcal{O})\times\Ll(\mathcal{O})\rightarrow \mathbb{R}_+$ be a bounded measurable function such that $\forall\, t\in (0,T)$ and $\tilde{\omega}\in\tilde{\Omega}$, $\varphi(t,\cdot,\cdot,\cdot,\tilde{\omega}):\h(\mathcal{O})\times L^2(\mathcal{O})\times\Ll(\mathcal{O})\rightarrow \mathbb{R}_+$ is lower semicontinuous. Then
\begin{equation}
\label{cost1}
\lim\limits_{n\rightarrow\infty}\tE\left[\int_0^T\int_{\mathcal{O}}\left(\varphi(t,\tilde{u}^n,\tilde{\z}^n,\tilde{U}^n)-\varphi(t,\tilde{u},\tilde{\z}^n,\tilde{U}^n)\right)^- dx\,dt\right]=0,
\end{equation}
and
\begin{equation}
\label{cost2}
\lim\limits_{n\rightarrow\infty}\tE\left[\int_0^T\int_{\mathcal{O}}\left(\varphi(t,\tilde{u},\tilde{\z}^n,\tilde{U}^n)-\varphi(t,\tilde{u},\tilde{\z},\tilde{U}^n)\right)^- dx\,dt\right]=0.
\end{equation}
\end{lemma}

\begin{proof}
Define, $\Theta(t,\tilde{v},\tilde{\z},\tilde{U},\tilde{\omega}):=\varphi(t,\tilde{v},\tilde{\z},\tilde{U},\tilde{\omega})-\varphi(t,\tilde{u},\tilde{\z},\tilde{U},\tilde{\omega})$. For $\delta>0$ and $y\in\mathbb{H}^{-1}(\mathcal{O})$, define
\begin{align*}
\begin{split}
Y^m:&=\left\{(t,x,\tilde{\omega})\in[0,T] \times \mathcal{O} \times \tilde{\Omega}:\right.\\
&\qquad\qquad\qquad\left. \inf\limits_{|\langle y,\tilde{u}(t,x)\rangle-\langle y,\tilde{v}(t,x)\rangle|\leq 1/m}\Theta(t,\tilde{v}(t,x),\tilde{\z}(t,x),\tilde{U}(x),\tilde{\omega})\leq -\delta\right\}.
\end{split}
\end{align*}
Note that $Y^{m+1}\subseteq Y^m$, and lower semicontinuity of $\varphi$ implies that each $t$-section of $Y^m$ is closed. Moreover,  $\varphi(t,\cdot,\tilde{\z},\tilde{U},\tilde{\omega}):\h(\mathcal{O})\to\mathbb{R}_+$ being lower semicontinuous implies that if for any sequence $\tilde{v}^n\to \tilde{u}$ in $\h(\mathcal{O})$, we have 
\begin{equation*}
\liminf\limits_{n\rightarrow\infty} \Theta(t,\tilde{v}^n,\tilde{\z},\tilde{U},\tilde{\omega}) \geq 0.
\end{equation*}
Hence
\begin{equation*}
\cap_m Y^m=\emptyset.
\end{equation*}
Now we define
\begin{align*}
\hat{Y}^n:&=\left\{(t,x,\tilde{\omega}):\Theta(t,\tilde{u}^n(t,x),\tilde{\z}^n(t,x),\tilde{U}^n(x),\tilde{\omega})^->\delta\right\}\\
&=\left\{(t,x,\tilde{\omega}):\Theta(t,\tilde{u}^n(t,x),\tilde{\z}^n(t,x),\tilde{U}^n(x),\tilde{\omega})<-\delta\right\}.
\end{align*}
Then for sufficiently large $n$, $\tilde{P}(\hat{Y}^n)\leq \tilde{P}(Y^m)$ (as $\hat{Y}^n\subseteq Y^m$ for $n>m$).
Furthermore,
\begin{align} \label{lim}
\limsup\limits_{n \rightarrow \infty}\tilde{P}(\hat{Y}^n)=\lim_{m \rightarrow \infty}\limsup\limits_{n \rightarrow \infty}\tilde{P}(\hat{Y}^n)\leq \lim_{m \rightarrow \infty} \tilde{P}(Y^m)=\tilde{P}\left(\cap_m Y^m\right)=0.
\end{align}
We now proceed as in Jacod and M{\'e}min \cite{jacod} to obtain \eqref{cost1}.
Since $\varphi$'s are bounded measurable, so $|\Theta(t,\tilde{u}^n,\tilde{\z}^n,\tilde{U}^n,\tilde{\omega})| \leq C\,\, \,\forall \,\,n,$ and using \eqref{lim} we have
\begin{align*}
&\int_{\tilde{\Omega}}\int_0^T\int_\mathcal{O}\Theta(t,\tilde{u}^n(t,x),\tilde{\z}^n(t,x),\tilde{U}^n(x),\tilde{\omega})^{-}\;dx\,dt\,d \tilde{P}(\tilde{\omega})\\
&= \int_{{\hat{Y'}^n}}\Theta(t,\tilde{u}^n(t,x),\tilde{\z}^n(t,x),\tilde{U}^n(x),\tilde{\omega})^{-}\;dx\,dt\,d \tilde{P}(\tilde{\omega})\\
& \quad+\int_{{\hat{Y}^n}}\Theta(t,\tilde{u}^n(t,x),\tilde{\z}^n(t,x),\tilde{U}^n(x),\tilde{\omega})^{-}\;dx\,dt\,d \tilde{P}(\tilde{\omega})\\
& < \delta \tilde{P}({\hat{Y'}^n})+ C \tilde{P}({\hat{Y}^n}) \rightarrow 0, \quad \mbox{as} \quad n \rightarrow \infty
\end{align*} and as $\delta>0$ is arbitrarily small where $\hat{Y'}^n$ denotes the complement of $\hat{Y}^n.$
Therefore $$\lim\limits_{n\rightarrow\infty}\tE\left[\mathlarger{\int_0^T\int_\mathcal{O}}\Theta(t,\tilde{u}^n,\tilde{\z}^n,\tilde{U}^n)^{-}\;dx\,dt\right]=0,$$
which proves \eqref{cost1}.
By similar approach one can prove \eqref{cost2}.
\end{proof} 
\noindent
\textbf{Step III}:\\
Using Lemma \ref{lowersemcont} and using  $\mathcal{L}(\tilde{u}^n,\tilde{\z}^n,\tilde{U}^n)=\mathcal{L}(u^n,\z^n,U^n),$ we have
\begin{align} \label{Fat.F}
\tE\Big[\int_0^T \int_{\mathcal{O}} L(t,\tilde{u}(t),\tilde{\z}(t),\tilde{U})dx\,dt\Big] \leq &\liminf_{n \rightarrow \infty} \tE \Big[\int_0^T \int_{\mathcal{O}} L(t,\tilde{u}^n(t),\tilde{\z}^n(t),\tilde{U}^n)dx\,dt\Big] \notag\\
= &\liminf_{n \rightarrow \infty} \E^n \Big[\int_0^T \int_{\mathcal{O}} L(t,u^n(t),\z^n(t),U^n)dx\,dt\Big]
\end{align} 
which is finite because of \eqref{J.pi} and $J(\pi)<+\infty,$ where $\mathbb{E}^n$ denotes the expectation with respect to $P^n.$
Hence, $J(\tilde{\pi})<+\infty,$ or in other words, the mapping $$[0,T]\times\mathcal{O}\times \tilde{\Omega}\ni (t,x,\tilde{\omega})\mapsto L(t,\tilde{u}(t,x,\tilde{\omega}),\tilde{\z}(t,x,\tilde{\omega}),\tilde{U}(x,\tilde{\omega})) \in \mathbb{R}$$ belongs to $L^1([0,T]\times\mathcal{O}\times\tilde{\Omega};\mathbb{R}).$
Therefore, $\tilde{\pi}$ satisfies all the conditions of Definition \ref{defi.opt}, and hence is a weak admissible control.

Now employing \eqref{J.pi}, \eqref{Fat.F} and $\mathcal{L}(\tilde{u}^n,\tilde{\z}^n,\tilde{U}^n)=\mathcal{L}(u^n,\z^n,U^n),$ it follows that 
\begin{align*}
J(\tilde{\pi})&=\tE\Big[\int_0^T \int_{\mathcal{O}} L(t,\tilde{u}(t),\tilde{\z}(t),\tilde{U})dx\,dt\Big] \\
&\leq \liminf_{n \rightarrow \infty} \Big[\mathbb{E}^n \int_0^T \int_{\mathcal{O}} L(t,u^n(t),\z^n(t),U^n)dx\,dt \Big]\\
&=\inf_{\pi \in \bar{\mathcal{U}}^{w}_{ad}(u_0,\z_0,T)} J(\pi).
\end{align*} 
This proves $\tilde{\pi}$ is a weak optimal control for the control problem, hence this completes the proof.
\end{proof}


\appendix
\section{}
Here we will provide a proof of $\h$ energy estimate of \eqref{eqn5}-\eqref{eqn6}. For this we need to impose more regular conditions on the noise coefficients $\s$ and $H$. We assume that $\s$ and $H$ satisfy the following hypotheses:
\begin{enumerate}
	\item[A.1] $\s\in C([0,T]\times \h(\mathcal{O});L_Q(\mathbb{L}^2,\h)), H\in\mathbb{H}^2_\lambda([0,T]\times Z;\h(\mathcal{O}))$,
	\item[A.2] For all $t\in(0,T)$, there exists a positive constant $K$ such that for all $u\in\h(\mathcal{O})$ 
	\begin{equation*}
	\|\s(t,u)\|_{L_Q(\mathbb{L}^2,\h)}^2+\int_Z\| H(u,z)\|_{\h}^2\la\leq K(1+\|u\|_{\h}^2),
	\end{equation*}
	\item[A.3] For all $t\in(0,T)$, there exists a positive constant $L$ such that for all $u,v\in\h(\mathcal{O})$  
	\begin{equation*}
	\|\s(t,u)-\s(t,v)\|_{L_Q(\mathbb{L}^2,\h)}^2+\int_Z \| H(u,z)-H(v,z)\|_{\h}^2\la 
	\leq L\|u-v\|_{\h}^2.
	\end{equation*}
\end{enumerate}
We also make two additional assumptions, due to technical reasons, as following:
\begin{enumerate}
	\item[A.4] The depth function $h\in C^2_b(\mathcal{O})$. 
	\item[A.5] $curl\ curl u =0$ (i.e., rotation of the fluid velocity is uniform over $\mathcal{O}$).\\
	Note that, in two dimensions $curl u$ is a scalar that should be understood as follows
	$$curl u := \nabla\times u = \nabla\times (u_1e_1+u_2e_2+ 0e_3)= 0e_1+0e_2+(\dfrac{\partial u_2}{\partial x}-\dfrac{\partial u_1}{\partial y})e_3,$$ where $(e_1, e_2, e_3)$ is the canonical basis of $\mathbb{R}^3$. 
\end{enumerate}	
\begin{proposition}\label{PropH1}
	Let
	\begin{equation}
	\left\{\begin{array}{l}
	w^0\in L^4(\Omega;L^4(0,T;\h(\mathcal{O}))),\;f\in L^2(\Omega;L^2(0,T;\h(\mathcal{O}))),\\
	u_0\in L^2(\Omega;\h(\mathcal{O})),\z_0\in L^2(\Omega;H^1_0(\mathcal{O}))
	\end{array}\right.
	\end{equation}
and the assumptions A.1 - A.5 hold.
If $(u^n(t),\z^n(t))$ denotes the unique strong solution of the system \eqref{eqn5}-\eqref{eqn6}, then the following a-priori estimates hold:
	\begin{align}
	&\E\left[\|u^n(t)\|_{\h}^2+\|\z^n(t)\|_{\h}^2\right]+ \frac{3\alpha}{4}\E\int_0^{t}\|\triangle u^n(s)\|^2_{\Ll} ds \leq C_{11}, \;\;\forall t\in[0,T],\label{h1est}\\
	&\E\left[\sup_{0\leq t\leq T}(\|u^n(t)\|_{\h}^2+\|\z^n(t)\|_{\h}^2)\right]+ \frac{3\alpha}{4}\E\int_0^{T}\|\triangle u^n(t)\|^2_{\Ll} dt \leq C_{12},\label{h1est1}
	\end{align}
	where the constants $C_{11}$ and $C_{12}$ depend on the coefficients $\alpha, g,M,\mu$ and the norms $\|f\|_{ L^2(\Omega; L^2(0,T;\h))},$\\$\|w^0\|_{L^4(\Omega; L^4(0,T;\h))},\gr{\|u_0\|_{L^2(\Omega;\h)}}$, $\gr{\|\z_0\|_{L^2(\Omega;\h)}}$ and $T$.
\end{proposition}
\begin{proof}
Define
	\begin{IEEEeqnarray*}{lrl}
		\tau_N :=\inf\{t \geq 0:\|u^n(t)\|_{\h}^2+\|\z^n(t)\|_{\h}^2+\dfrac{3\alpha}{4}\int_0^t\|\triangle u^n(s)\|^2_{\Ll}ds > N\}
	\end{IEEEeqnarray*}
	as the stopping time.
	Applying It\^o's lemma to the \ada{process $\|\nabla u^n(t)\|_{\Ll}^2$ }
	\begin{align*}
	&\|u^n(\L)\|_{\h}^2+ 2\alpha \int_0^{\L}\|\triangle u^n(s)\|^2_{\Ll}ds+2\int_0^{\L}(\nabla B(u^n(s)),\nabla u^n(s))_{\Ll}ds\\&\quad
		+2\int_0^{\L}\gr{\langle \nabla (g\nabla\z^n(s)) ,\nabla u^n(s) \rangle} ds
		=2\int_0^{\L}(\nabla f,\nabla u^n(s))_{\Ll}ds\\
		&\quad+2\int_0^{\L}(\nabla \s^n(s,u^n(s))dW^n(s),\nabla u^n(s))_{\Ll} \\&\quad
	  +\int_0^{\L}\|\nabla\s^n(s,u^n(s))\|_{L_Q(\mathbb{L}^2,\Ll)}^2ds \\
		&\quad+\int_0^{\L}\int_Z\|\nabla H^n(u^n(s-),z)\|_{\Ll}^2 N(ds,dz) + \|u^n(0)\|_{\h}^2\\
		&\quad+2\int_0^{\L}\int_Z (\nabla H^n(u^n(s-),z),\nabla u^n(s-))_{\Ll}\ns .\IEEEyesnumber
	\end{align*}
	Using divergence theorem we obtain
	\begin{align}\label{h11}
	 &\|u^n(\L)\|_{\h}^2+ 2\alpha\int_0^{\L} \|\triangle u^n(s)\|^2_{\Ll} ds\nonumber\\
		&=\|u^n(0)\|_{\h}^2+2\int_0^{\L}( B(u^n(s)),\triangle u^n(s))_{\Ll}ds\nonumber \\& \quad+2\int_0^{\L}(g\nabla \z^n(s) ,\triangle u^n(s))_{\Ll}     ds +2\int_0^{\L}(\nabla f,\nabla u^n(s))_{\Ll}ds\nonumber \\& \quad
		+2\int_0^{\L}(\nabla \s^n(s,u^n(s))dW^n(s),\nabla u^n(s))_{\Ll}\nonumber\\
		 &\quad+\int_0^{\L}\|\nabla\s^n(s,u^n(s))\|_{L_Q(\mathbb{L}^2,\Ll)}^2ds\nonumber \\& \quad
		+\int_0^{\L}\int_Z\|\nabla H^n(u^n(s-),z)\|_{\Ll}^2 N(ds,dz)\nonumber \\
		&\quad+2\int_0^{\L}\int_Z (\nabla H^n(u^n(s-),z),\nabla u^n(s-))_{\Ll}\ns .
	\end{align}
	Let $\{e_j\}_{j=1}^{\infty}$ be an orthonormal basis in
	$\mathbb L^2(\mathcal{O})$ such that $Qe_j=\lambda_je_j$ for all
	$j=1,2,\cdots$, where $\lambda_j$ are the eigenvalues of $Q$. Then by \eqref{hsnorm}, we observe that
	\begin{align}\label{sr3}
&\|\nabla\s^n(t,u^n(t))\|_{L_Q(\mathbb{L}^2,\Ll)}^2=\sum_{j=1}^{\infty}\lambda_j\|\nabla\s^n(t,u^n(t))e_j\|_{\Ll}^2 \nonumber\\&= \sum_{j=1}^{\infty}\lambda_j\|\s^n(t,u^n(t))e_j\|_{\h}^2
=\|\s^n(t,u^n(t))\|_{L_Q(\mathbb{L}^2,\h)}^2.
	\end{align}
		Using equation \eqref{e2}, Young's and Minkowskii inequalities, we obtain
		\begin{IEEEeqnarray*}{lrl}
			2(B(u^n),\triangle u^n)_{\Ll}&\leq &\dfrac{2}{\alpha}\|B(u^n)\|_{\Ll}^2+\frac{\alpha}{2}\|\triangle u^n(t)\|_{\Ll}^2\\
			&\leq & \dfrac{16}{\alpha}(\|u^n(t)\|^4_{\mathbb{L}^4}+\|w^0(t)\|_{\mathbb{L}^4}^4)+\frac{\alpha}{2}\|\triangle u^n(t)\|_{\Ll}^2.
		\end{IEEEeqnarray*}
	Using Young's inequality
	\begin{equation}
	2|g(\nabla\z^n(t) ,\triangle u^n(t))_{\Ll}|\leq \dfrac{2g^2}{\alpha}\|\z^n(t)\|_{\h}^2+\dfrac{\alpha}{2}\|\triangle u^n(t)\|_{\Ll}^2,
	\end{equation}
	and
	\begin{equation}
	2(\nabla f,\nabla u^n(t))_{\Ll}\leq\|f(t)\|_{\h}^2+\|u^n(t)\|_{\h}^2.
	\end{equation}
Hence using the above estimates in \eqref{h11}
	\begin{align}\label{h12}
		&\|u^n(\L)\|_{\h}^2+ \alpha\int_0^{\L}\|\triangle u^n(s)\|^2_{\Ll} ds\notag\\
		&\leq \int_0^{\L} \Big(\|u^n(s)\|_{\h}^2+\dfrac{16}{\alpha}\|u^n(s)\|^4_{\mathbb{L}^4}+\dfrac{2g^2}{\alpha}\|\z^n(s)\|_{\h}^2+\|f(s)\|_{\h}^2\notag\\&\quad\quad+\dfrac{16}{\alpha}\|w^0(s)\|_{\mathbb{L}^4}^4\Big)ds
		+\int_0^{\L}\|\s^n(s,u^n(s))\|_{L_Q(\mathbb{L}^2,\h)}^2ds\notag\\
			&\quad+\int_0^{\L}\int_Z\|H^n(u^n(s-),z)\|_{\h}^2 N(ds,dz)\notag\\
		&\quad+2\int_0^{\L}(\nabla \s^n(s,u^n(s))dW^n(s),\nabla u^n(s))_{\Ll}\notag\\
		&\quad+
		2\int_0^{\L}\int_Z (\nabla H^n(u^n(s-),z),\nabla u^n(s-))_{\Ll}\ns	+\|u(0)\|_{\h}^2. 
	\end{align} 
Now assuming an additional boundary condition that the gradient of the displacement of the free surface with respect to the ocean bottom is zero on the boundary of the domain (i.e. $\nabla\z=0$ on $[0, T]\times\partial\mathcal{O}$), we have from \eqref{eqn6}
	\begin{align}
	\|\nabla\z^n(\L)\|_{\Ll}^2&=-2\int_0^{\L}(\nabla(Div(hu^n(s))),\nabla\z^n(s))_{\Ll}ds+\|\nabla\z^n(0)\|_{\Ll}^2\nonumber\\
	&=-2\int_0^{\L}(\nabla(h Div u^n(s))+\nabla(\nabla h\cdot u^n(s)),\nabla\z^n(s))_{\Ll}ds\nonumber\\&\quad +\|\nabla\z^n(0)\|_{\Ll}^2
	:=\int_0^{\L}(I_1 + I_2) ds+\|\nabla\z^n(0)\|_{\Ll}^2.\label{zestimate}
	\end{align}
Now by use of product rule of gradient, assumption A.5, Minkowskii, H\"older's and Young's inequalities, we have
\begin{align*}
|I_1|&\leq 2\|Div u^n(t) \nabla h + h\nabla(Div u^n(t))\|_{\Ll} \|\nabla\z^n(t)\|_{\Ll}\\
&\leq 2(\|\nabla h\|_{\mathbb{L}^{\infty}}\|Div u^n(t)\|_{L^2}+ \| h\|_{L^{\infty}} \|\nabla(Div u^n(t))\|_{\Ll}) \|\nabla\z^n(t)\|_{\Ll}\\
&\leq 2(M\|u^n(t)\|_{\h}+\mu  \|\nabla(Div u^n(t))\|_{\Ll}) \|\nabla\z^n(t)\|_{\Ll}\\
&\leq 2(M\|u^n(t)\|_{\h}+\mu \|\triangle u^n(t) + curl\ curl u^n(t)\|_{\Ll})\|\nabla\z^n(t)\|_{\Ll}\\
&= 2(M\|u^n(t)\|_{\h}+\mu \|\triangle u^n(t)\|_{\Ll})\|\nabla\z^n(t)\|_{\Ll}\\
&\leq M (\|u^n(t)\|_{\h}^2 + \|\z^n(t)\|_{\h}^2)+\dfrac{\alpha}{4} \|\triangle u^n(t)\|_{\Ll}^2 + \dfrac{2\mu^2}{\alpha} \|\z^n(t)\|_{\h}^2\\
&\leq (M+\dfrac{2\mu^2}{\alpha}) (\|u^n(t)\|_{\h}^2 + \|\z^n(t)\|_{\h}^2) + \dfrac{\alpha}{4} \|\triangle u^n(t)\|_{\Ll}^2.
\end{align*}
By use of classical tame estimates for the product in Sobolev spaces (see, for instance, Corollary 2.4.1 of \cite{Chemin}), we have
\begin{align*}
|I_2|&\leq 2\|\nabla(\nabla h\cdot u^n(t))\|_{\Ll} \|\nabla\z^n(t)\|_{\Ll}\\
&\leq 2c (\|\nabla h\|_{\mathbb{L}^{\infty}}\|\nabla u^n(t)\|_{\Ll} + \|\nabla(\nabla h)\|_{\mathbb{L}^{\infty}}\|u^n(t)\|_{\Ll}) \|\nabla\z^n(t)\|_{\Ll}\\
&\leq c(M+Nc_P) (\|u^n(t)\|_{\h}^2 + \|\z^n(t)\|_{\h}^2),
\end{align*}
where $N:=\max_{x\in\mathcal{O}} |\nabla(\nabla h(x))|$ (as $h\in C^2_b(\mathcal{O})$ by assumption A.4), and $c_P$ is the Poincar\'e constant.

Hence from the estimates of $I_1$ and $I_2$, we have from \eqref{zestimate}, 
\begin{align}\label{eeeeq1}
\|\z^n(\L)\|_{\h}^2 &\leq \|\z(0)\|_{\h}^2+C\int_0^{\L}\Big(\|u^n(s)\|_{\h}^2 + \|\z^n(s)\|_{\h}^2\Big)ds \nonumber\\
&\quad+ \dfrac{\alpha}{4}\int_0^{\L} \|\triangle u^n(s)\|_{\Ll}^2ds  , 
\end{align}
where $C:=M+\dfrac{2\mu^2}{\alpha}+c(M+Nc_P)$.

Adding equations \eqref{h12} and \eqref{eeeeq1}, we obtain
	
\begin{align} \label{h1energy1} 
 &\|u^n(t\wedge \tau_N)\|_{\h}^2+\|\z^n(t\wedge\tau_N)\|_{\h}^2+ \frac{3\alpha}{4}\int_0^{t\wedge\tau_N}\|\triangle u^n(s)\|^2_{\Ll} ds\notag\\
		&\leq K_1\left[\int_0^{t\wedge\tau_N}(\|u^n(s)\|_{\h}^2+\|\z^n(s)\|_{\h}^2)ds\right] +\dfrac{16}{\alpha}\int_0^{t\wedge\tau_N}\|u^n(s)\|^4_{\mathbb{L}^4}ds\notag\\
		&\quad+\int_0^{t\wedge\tau_N}\left(\|f(s)\|_{\h}^2+\dfrac{16}{\alpha}\|w^0(s)\|_{\mathbb{L}^4}^4\right)ds+\|u^n_0\|_{\h}^2+\|\z^n_0\|_{\h}^2\notag\\\ &\quad+K(t\wedge\tau_N),
	\end{align}	
	where $K_1:=\max\{1+C, 2g^2/\alpha + C\}$.\\
	Following the same steps as in Proposition \ref{prop}, we integrate over $0\leq t\leq T\wedge\tau_N$ and take expectation on both sides and employ the facts that $(i)$ stochastic integrals appearing in our calculations are martingales with zero average, $(ii)$ expectation of quadratic variation process and that of Meyer process are same and $(iii)$ assumption A.2, to obtain 
	\begin{IEEEeqnarray*}{lr}
		\E\left[\|u^n(t\wedge \tau_N)\|_{\h}^2+\|\z^n(t\wedge\tau_N)\|_{\h}^2\right]+ \frac{3\alpha}{4}\E\int_0^{t\wedge\tau_N}\|\triangle u^n(s)\|^2_{\Ll} ds\\
		\leq K_1\E\left[\int_0^{t\wedge\tau_N}(\|u^n(s)\|_{\h}^2+\|\z^n(s)\|_{\h}^2)ds\right] +\dfrac{16}{\alpha}\E\int_0^{t\wedge\tau_N}\|u^n(s)\|^4_{\mathbb{L}^4}ds\\
		\quad+\E\left[\int_0^{t\wedge\tau_N}\left(\|f(s)\|_{\h}^2+\dfrac{16}{\alpha}\|w^0(s)\|_{\mathbb{L}^4}^4\right)ds\right]+\E\left[\|u^n_0\|_{\h}^2+\|\z^n_0\|_{\h}^2\right]\\ \quad+K(t\wedge\tau_N),\IEEEyesnumber
	\end{IEEEeqnarray*}
 As before we observe that since $w^0\in L^8(\Omega; L^8(0,T;\mathbb{L}^{8}(\mathcal{O}))) \subset  L^{4}(\Omega; L^{4}(0,T;\\\mathbb{L}^4(\mathcal{O})))$, by Proposition \ref{moment}, for $p=4$, we have $\mathbb{E}[\int_0^T \|u^n(s)\|_{\mathbb{L}^2}^2  \|u^n(s)\|_{\h}^2ds] \leq C_{1(4)}$, which essentially yields $\mathbb{E}[\int_0^T \|u^n(s)\|_{\mathbb{L}^4}^4 ds] \leq C_{1(4)}$ (by the estimate \eqref{eq13}).\\
Hence by applying Gronwall's lemma and taking limit as $N\to\infty$ we have the desired a-priori estimate \eqref{h1est}. 

To prove \eqref{h1est1} one can proceed similarly but needs to take supremum over $[0, T\wedge\tau_N]$ over \eqref{h1energy1} before taking expectation and then suitably apply Burkholder-Davis-Gundy inequality.

\end{proof}

\noindent
{\bf Acknowledgements:} Pooja Agarwal would like to thank Department of Science and Technology, Govt. of India, for the INSPIRE fellowship. Utpal Manna's work has been supported by the National Board of Higher Mathematics of Department of Atomic Energy, Govt. of India, under Grant No. NBHM/RP46/2013/Fresh/421. The authors would also like to thank Sakthivel Kumarasamy of Indian Institute of Space Science and Technology for his comments and pointing our attention to certain references. Finally, the authors would like to sincerely thank the anonymous referee for his/her valuable comments and criticisms which led to the improvement of this paper.


\end{document}